\DeclareFontFamily{OMX}{MnSymbolE}{}
\DeclareSymbolFont{MnLargeSymbols}{OMX}{MnSymbolE}{m}{n}
\DeclareFontShape{OMX}{MnSymbolE}{m}{n}{
    <-6>  MnSymbolE5
   <6-7>  MnSymbolE6
   <7-8>  MnSymbolE7
   <8-9>  MnSymbolE8
   <9-10> MnSymbolE9
  <10-12> MnSymbolE10
  <12->   MnSymbolE12
}{}
\DeclareFontShape{OMX}{MnSymbolE}{b}{n}{
    <-6>  MnSymbolE-Bold5
   <6-7>  MnSymbolE-Bold6
   <7-8>  MnSymbolE-Bold7
   <8-9>  MnSymbolE-Bold8
   <9-10> MnSymbolE-Bold9
  <10-12> MnSymbolE-Bold10
  <12->   MnSymbolE-Bold12
}{}
\let\llangle\@undefined
\let\rrangle\@undefined
\DeclareMathDelimiter{\llangle}{\mathopen}%
                     {MnLargeSymbols}{'164}{MnLargeSymbols}{'164}
\DeclareMathDelimiter{\rrangle}{\mathclose}%
                     {MnLargeSymbols}{'171}{MnLargeSymbols}{'171}
\definecolor{bwgreen}{rgb}{0.01,0.10,0.25}
\definecolor{bwmagenta}{rgb}{0.25,0.0,0.1}
\definecolor{bwblue}{rgb}{0.317,0.161,1}
\def\@splitop#1#2\@nil{$\mathscr{#1}\!\!$\calligra#2\,\,}
\newcommand*\DeclareCursiveOperator[2]{%
  \newcommand#1{\mathop{\mbox{\@splitop#2\@nil}}\nolimits}}
\DeclareCursiveOperator{\TAY}{Hess}
\DeclareCursiveOperator{\HOM}{Hom}
\newtheorem{theo}{Theorem}[section]
\newtheorem{cor}[theo]{Corollary}
\newtheorem{lemma}[theo]{Lemma}
\newtheorem{remark}[theo]{Remark}
\newtheorem{proposition}[theo]{Proposition}
\newtheorem*{theoremA}{Theorem A}
\newtheorem*{theoremB}{Theorem B}
\newtheorem*{theoremC}{Theorem C}
\newtheorem*{theoremD}{Theorem D}
\newtheorem*{theoremE}{Theorem E}
\newtheorem*{remm}{Remark}
\newcommand{\quo}[1]{ \mathbf{Z}/p^{n}\mathbf{Z}  }
\newcommand{\iw}{\Lambda}
\newcommand{\gaun}{  \mathfrak{G}    }
\newcommand{\fre}[1]{\stackrel{#1}{\rightarrow}}
\newcommand{\defo}{\mathbb{T}}
\DeclareSymbolFont{cyrletters}{OT2}{wncyr}{m}{n}
\DeclareMathSymbol{\sha}{\mathalpha}{cyrletters}{"58}
\newcommand{\inlim}{\mathop{\varprojlim}\limits}
\newcommand{\dia}[1]{\left<{}#1\right>}
\newcommand{\hid}{\iw}
\newcommand{\lri}[1]{\left(#1\right)}
\newcommand{\cts}{C_{\mathrm{cont}}}
\newcommand{\ctsb}{C_{\mathrm{cont}}^{\bullet}}
\newcommand{\sco}{\widetilde{C}_{f}}
\newcommand{\scob}{\widetilde{C}_{f}^{\bullet}}
\newcommand{\derco}{\widetilde{\mathbf{R}\Gamma}_{f}}
\newcommand{\exsel}{\widetilde{H}_f}
\newcommand{\Hom}[1]{\mathrm{Hom}_{#1}}
\newcommand{\xari}{\mathcal{X}^{\mathrm{arith}}}
\newcommand{\derot}[1]{\otimes_{#1}^{\mathbf{L}}}
\newcommand{\neko}{Nekov\'a\v{r}}
\newcommand{\dercts}{\mathbf{R}\Gamma_{\mathrm{cont}}}
\newcommand{\hwp}[2]{\left\llangle #1,#2 \right\rrangle_{p}}
\newcommand{\bki}{\mathfrak{Z}^{\mathrm{BK}}}
\newcommand{\bock}{\widetilde{\beta}_{p}}
\newcommand{\lfre}[1]{\stackrel{#1}{\longrightarrow}}
\newcommand{\T}{\mathbb{T}}
\newcommand{\R}{\overline{R}}
\newcommand{\Ic}{\iw_{\mathrm{cyc}}}
\newcommand{\M}{\texttt{M}}
\newcommand{\Mm}{\overline{\texttt{M}}}
\newcommand{\Z}{\mathbf{Z}}
\newcommand{\Q}{\mathbf{Q}}
\newcommand{\N}{\mathbf{N}}
\author{Rodolfo Venerucci}
\begin{document}

\renewcommand{\addresses}{{
  \bigskip
  \footnotesize

  R.~Venerucci, \textrm{Universität Duisburg--Essen,
Fakultät für Mathematik,
Mathematikcarrée,
Thea--Leymann--Stra\ss{}e 9, 45127 Essen.
    }\par\nopagebreak
  \textit{E-mail address}: \texttt{rodolfo.venerucci@uni-due.de}

}}

\ 

\title{Exceptional zero formulae and a conjecture of Perrin-Riou}
\maketitle

\begin{abstract} Let $A/\Q$ be an elliptic curve with split multiplicative reduction at a prime $p$. We prove  (an analogue of) a 
conjecture of Perrin-Riou, relating  $p$-adic  Beilinson--Kato elements to Heegner points in $A(\Q)$,
and a large part of the rank-one case of the Mazur--Tate--Teitelbaum exceptional zero conjecture
for the cyclotomic  $p$-adic $L$-function of $A$.
More generally, let $f$ be the weight-two newform associated with $A$,
let $f_{\infty}$ be the Hida family of $f$, and let $L_{p}(f_{\infty},k,s)$ be the Mazur--Kitagawa two-variable $p$-adic $L$-function attached to $f_{\infty}$. We prove a $p$-adic Gross--Zagier formula, expressing the quadratic term of the Taylor expansion 
of $L_{p}(f_{\infty},k,s)$ at $(k,s)=(2,1)$ as 
a non-zero rational 
multiple of the 
extended height-weight of a Heegner point in $A(\Q)$. 
\end{abstract}

\setcounter{tocdepth}{1}

\section{Introduction}

Let $A$ be an elliptic curve over $\Q$ of  conductor  $Np$, with $p>3$ a prime  of \emph{split} multiplicative reduction. 
Fix algebraic closures $\overline{\Q}$ and $\overline{\Q}_{p}$ of $\Q$
and $\Q_{p}$ respectively, and an embedding $i_{p} : \overline{\Q}\hookrightarrow{}\overline{\Q}_{p}$.
Assume throughout this paper that the $p$-torsion subgroup $A_{p}$ of $A(\overline{\Q})$ is an irreducible $\mathbf{F}_{p}[G_{\Q}]$-module, where $G_{\Q}:=\mathrm{Gal}(\overline{\Q}/\Q)$. 

For every $n\in{}\N$, write $\Q_{n}/\Q$ for the cyclic sub-extension of $\Q(\mu_{p^{n+1}})/\Q$ of degree $p^{n}$ and let
$\Q_{\infty}=\bigcup_{n\in{}\N}\Q_{n}$ be the cyclotomic $\Z_{p}$-extension of $\Q$.
Denote by $G_{\infty}:=\mathrm{Gal}(\Q_{\infty}/\Q)$ 
the Galois group of $\Q_{\infty}$ over $\Q$ and by  $\Ic:=\Z_{p}\llbracket{}G_{\infty}\rrbracket$ the cyclotomic Iwasawa algebra. 
Associated with  $A/\Q$ (and $i_{p}$) there is a  $p$-adic $L$-function $$L_{p}(A/\Q)\in{}\Ic,$$ interpolating 
the critical values $L(A/\Q, \chi,1)$ of the Hasse--Weil $L$-function  of $A/\Q$ twisted by finite order characters $\chi : G_{\infty}\fre{}\overline{\Q}_{p}^{\ast}$.
Thanks to the results  of Kato and Coleman--Perrin-Riou, it is known that $L_{p}(A/\Q)$ arises from an Euler system for the $p$-adic Tate module of $A/\Q$.
More precisely, denote by $\Q_{p,\infty}=\bigcup_{n\in{}\N}\Q_{p,n}$ the cyclotomic $\Z_{p}$-extension of $\Q_{p}$ (with notations similar to those introduced above),
and by $T_{p}(A)$ the $p$-adic Tate module of $A$.
For $K=\Q$ or $\Q_{p}$, let
$H^{1}_{\mathrm{Iw}}(K_{\infty},T_{p}(A))$ be the inverse limit of the cohomology groups $H^{1}(K_{n},T_{p}(A))$.
The work of  Coleman--Perrin-Riou yields   a   \emph{big dual exponential}
\[
          \mathcal{L}_{A} : H^{1}_{\mathrm{Iw}}(\Q_{p,\infty},T_{p}(A))\lfre{}\Ic.
\]
It is a morphism of $\Ic$-modules, which interpolates the Bloch--Kato dual exponential maps 
attached to the  twists  of $T_{p}(A)$ by finite order characters $\chi$ of $G_\infty$
(see Section $\ref{colprsec}$ for the precise definition). 
In \cite{kateul} Kato constructs 
a cyclotomic  Euler system for $T_{p}(A)$, related to $L_{p}(A/\Q)$ via $\mathcal{L}_{A}$.
In particular he constructs an element
$\zeta^{\mathrm{BK}}_{\infty}=\big(\zeta^{\mathrm{BK}}_{n}\big)_{n\in{}\N}\in{}H^{1}_{\mathrm{Iw}}(\Q_{\infty},T_{p}(A))$ 
such that
\begin{equation}\label{eq:kato reciprocity +}
               \mathcal{L}_{A}\big(\mathrm{res}_{p}\big(\zeta_{\infty}^{\mathrm{BK}}\big)\big)=L_{p}(A/\Q).
\end{equation}
Kato's Euler system is built out of Steinberg symbols of certain Siegel modular units, which also appeared in the work 
of Beilinson. The classes $\zeta^{\mathrm{BK}}_{n}$ are then called \emph{$p$-adic Beilinson--Kato classes}.

\subsection{\textbf{A conjecture of Perrin-Riou}} 
Set $V_{p}(A):=T_{p}(A)\otimes_{\Z_{p}}\Q_{p}$ and denote by $\zeta^{\mathrm{BK}}$
the natural image of the class $\zeta^{\mathrm{BK}}_{0}\in{}H^{1}(\Q,T_{p}(A))$
in $H^{1}(\Q,V_{p}(A))$. We call $\zeta^{\mathrm{BK}}$
\emph{the $p$-adic Beilinson--Kato class} attached to $A$.
According to \emph{Kato's  reciprocity law} \cite{kateul}
\begin{equation}\label{eq:kato reciprocity}
            \exp_{A}^{\ast}\big(\mathrm{res}_{p}\big(\zeta^{\mathrm{BK}}\big)\big)=\lri{1-\frac{1}{p}}\frac{L(A/\Q,1)}{\Omega_{A}^{+}}\in{}\Q,
\end{equation}
where $\Omega_{A}^{+}\in{}\mathbf{R}^{\ast}$ is the real N\'eron period of $A$ and $\exp_{A}^{\ast} : H^{1}(\Q_{p},V_{p}(A))\fre{}
\mathrm{Fil}^{0}D_{\mathrm{dR}}(V_{p}(A))\cong{}\Q_{p}$ is the Bloch--Kato dual exponential map 
(see Section $\ref{expbk}$ for the last isomorphism).
In particular this implies that the complex Hasse--Weil $L$-function $L(A/\Q,s)$ vanishes at $s=1$
precisely if $\zeta^{\mathrm{BK}}$ is a Selmer class, i.e. if it belongs  to the  Bloch--Kato Selmer group  $H^{1}_{f}(\Q,V_{p}(A))\subset{}H^{1}(\Q,V_{p}(A))$
of  $V_{p}(A)$. 

When $L(A/\Q,1)=0$, it is natural to ask whether $\zeta^{\mathrm{BK}}$ is still related to the special values of $L(A/\Q,s)$.
Perrin-Riou addresses this question in \cite{PRconj} for elliptic curves with \emph{good} reduction at $p$. In that setting, she conjectures that the logarithm of the $p$-adic Beilinson--Kato 
class equals the square of the logarithm of a Heegner point on the elliptic curve, up to a
non-zero rational factor. In particular, she predicts 
that the Beilinson--Kato class is non-zero precisely if the Hasse--Weil $L$-function has a simple zero at $s=1$.
The first  aim of this paper is to prove the analogue of Perrin-Riou's conjecture in our 
multiplicative setting.

Since $A/\Q_{p}$ is split multiplicative, Tate's theory provides a $G_{\Q_{p}}$-equivariant  $p$-adic uniformisation
\begin{equation}\label{eq:Tate iso}
     \Phi_{\mathrm{Tate}} : \overline{\Q}_{p}^{\ast}/q_{A}^{\Z}\cong{}A(\overline{\Q}_{p}),
\end{equation}
where $q_{A}\in{}p\Z_{p}$ is the \emph{Tate period} of $A/\Q_{p}$. 
Denote by
$\log_{q_{A}} : \Q_{p}^{\ast}/q_{A}^{\Z}\fre{}\Q_{p}$ the branch of the $p$-adic logarithm which vanishes at $q_{A}$ and by  $$\log_{A}=\log_{q_{A}}\circ{}\Phi_{\mathrm{Tate}}^{-1}: A(\Q_{p})\lfre{}\Q_{p}$$
the formal group logarithm on $A/\Q_{p}$.
It induces an isomorphism $\log_{A} : A(\Q_{p})\widehat{\otimes}\Q_{p}\cong{}\Q_{p}$ on $p$-adic completions.

\begin{theoremA} Assume that $L(A/\Q,1)=0$, i.e. that $\zeta^{\mathrm{BK}}$ is a Selmer class. 

$1.$ There exist a non-zero rational number $\ell_{1}\in{}\Q^{\ast}$ and a rational point $\mathbf{P}\in{}A(\Q)\otimes\Q$ such that 
\[
                \log_{A}\big(\mathrm{res}_{p}\big(\zeta^{\mathrm{BK}}\big)\big)=\ell_{1}\cdot{}\log_{A}^{2}(\mathbf{P}).
\]

$2.$ $\mathbf{P}$ is non-zero if and only if $L(A/\Q,s)$ has a simple zero at $s=1$.\\
In particular: $\mathrm{res}_{p}(\zeta^{\mathrm{BK}})\not=0$ if and only if $L(A/\Q,s)$ has a simple zero at $s=1$.
\end{theoremA}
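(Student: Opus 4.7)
The plan is to deform the Beilinson--Kato class along the Hida family of $f$ and transport the problem to the analytic side via a two-variable analogue of Kato's reciprocity law $(\ref{eq:kato reciprocity +})$. First I would construct a \emph{big Beilinson--Kato class} $\boldsymbol{\zeta}^{\mathrm{BK}}$ in the Iwasawa cohomology of the big Galois representation $\mathbb{T}$ attached to $f_{\infty}$, interpolating the classical classes at all arithmetic specialisations (in the spirit of Ochiai and Fukaya--Kato). A big Coleman map interpolating $\mathcal{L}_{A}$ in the weight direction then produces an identity of the form $\mathcal{L}_{\infty}(\mathrm{res}_{p}(\boldsymbol{\zeta}^{\mathrm{BK}}))=L_{p}(f_{\infty},k,s)$, recovering $(\ref{eq:kato reciprocity +})$ at $k=2$.

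Next I would analyse the exceptional zero. Since $A/\Q_{p}$ is split multiplicative, the cyclotomic line $L_{p}(f_{\infty},2,s)$ has a trivial zero at $s=1$, so under the hypothesis $L(A/\Q,1)=0$ the two-variable function $L_{p}(f_{\infty},k,s)$ vanishes to order at least two at $(k,s)=(2,1)$. The linear weight-derivative at $s=1$ is governed by the Greenberg--Stevens $\mathcal{L}$-invariant formula, and the first non-trivial information is therefore concentrated in the \emph{quadratic} Taylor coefficient at $(2,1)$.

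The heart of the argument is the $p$-adic Gross--Zagier formula announced in the abstract: this quadratic coefficient equals a non-zero rational multiple of the extended height--weight pairing of a Heegner point $\mathbf{P}\in{}A(\Q)\otimes\Q$, obtained by descending a Heegner point on an auxiliary imaginary quadratic field $K$ (its rationality following from the sign of the functional equation of $L(A/K,s)$ in the analytic-rank-one scenario), all inside \neko{}'s framework of Selmer complexes. In the split multiplicative case, a Schneider-type formula suitably extended to the mixed weight--cyclotomic context identifies this extended pairing with $\log_{A}^{2}(\mathbf{P})$ up to an explicit non-zero rational factor. Combining with the interpolation formula of the first paragraph then yields
\[
  \log_{A}\bigl(\mathrm{res}_{p}(\zeta^{\mathrm{BK}})\bigr)=\ell_{1}\cdot{}\log_{A}^{2}(\mathbf{P})
\]
with $\ell_{1}\in{}\Q^{\ast}$, which is part $1$. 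Part $2$ then follows from the classical Gross--Zagier theorem, which guarantees that $\mathbf{P}$ is non-torsion if and only if $L(A/\Q,s)$ has a simple zero at $s=1$, together with the injectivity of $\log_{A}$ on $A(\Q)\otimes\Q$.

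The main obstacle is clearly the Gross--Zagier formula itself --- the identification of the quadratic Taylor coefficient of $L_{p}(f_{\infty},k,s)$ at $(2,1)$ with the extended height--weight of a Heegner point. This requires simultaneously an improved Hida-theoretic interpolation in the weight direction (to correctly encode the trivial zero and its $\mathcal{L}$-invariant), a Perrin-Riou--\neko{} cyclotomic Gross--Zagier formula for the $s$-derivative, and a careful gluing of both pieces inside \neko{}'s formalism of Selmer complexes and extended height--weight pairings. Tracking the rational constants to guarantee that $\ell_{1}$ is genuinely non-zero is expected to be the most delicate technical point.
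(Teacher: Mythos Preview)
Your proposal assembles many of the right ingredients --- the big Beilinson--Kato class, the two-variable reciprocity law of Kato--Ochiai, and \neko{}'s height--weight formalism --- but the logical architecture is inverted and one key algebraic step is missing.

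You propose to use the two-variable $p$-adic Gross--Zagier formula announced in the abstract (Theorem C: the quadratic term of $L_p(f_\infty,k,s)$ at $(2,1)$ equals $\ell_2\cdot\widetilde{h}_p(\mathbf{P})$) as the central input, and then extract Theorem A. In the paper the dependency runs the other way: Theorem C is \emph{deduced from} Theorem A --- its Step I writes $\zeta^{\mathrm{BK}}=\lambda\cdot\mathbf{P}$ via Gross--Zagier--Kolyvagin, which already presupposes $\zeta^{\mathrm{BK}}\not=0$ when $\mathbf{P}\not=0$, and its Step II uses Theorem B (hence again Theorem A). Your assertion that ``a Schneider-type formula identifies the extended pairing with $\log_A^2(\mathbf{P})$'' is also only correct after restriction to the central critical line $s=k/2$; the full element $\widetilde{h}_p(\mathbf{P})\in\mathscr{J}^2/\mathscr{J}^3$ has three independent coefficients and involves the canonical cyclotomic height and the weight pairing as well.

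What actually makes the argument work is a \emph{Rubin-style formula} (Theorem $\ref{p-adic GZ}$), valid for any big class $\mathfrak{Z}$ with Selmer specialisation $\mathfrak{z}$:
\[
   \log_A\big(\mathrm{res}_p(\mathfrak{z})\big)\cdot\mathcal{L}_\T\big(\mathrm{res}_p(\mathfrak{Z}),k,s\big)
   \equiv \frac{-1}{\mathrm{ord}_p(q_A)}\Big(1-\frac{1}{p}\Big)^{-1}\widetilde{h}_p(\mathfrak{z})\pmod{\mathscr{J}^3}.
\]
This is a purely cohomological identity, proved by an explicit Bockstein computation compared against the derivative of Ochiai's big dual exponential; no Heegner point appears. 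Applied to $\mathfrak{Z}=\bki_\infty$ and combined with the big reciprocity law, it relates $\log_A(\mathrm{res}_p(\zeta^{\mathrm{BK}}))\cdot L_p(f_\infty,k,s)$ to $\widetilde{h}_p(\zeta^{\mathrm{BK}})$. One then restricts to $s=k/2$: there the functional equation makes the pairing skew-symmetric, collapsing $\widetilde{h}_p^{\mathrm{cc}}(\zeta^{\mathrm{BK}})$ to $\tfrac{1}{2}\log_A^2(\mathrm{res}_p(\zeta^{\mathrm{BK}}))$, while the Bertolini--Darmon exceptional zero formula (Theorem $\ref{main Bertolini-Darmon}$) --- a one-variable input, not the full Theorem C --- computes the left side as $\ell\cdot\log_A^2(\mathbf{P})$. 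Comparing and cancelling one factor of $\log_A(\mathrm{res}_p(\zeta^{\mathrm{BK}}))$ (after a separate lemma shows it vanishes iff $\mathbf{P}$ does) gives Theorem A. Your vague ``combining with the interpolation formula'' hides precisely this Rubin step; it is the only mechanism by which the scalar $\log_A(\mathrm{res}_p(\zeta^{\mathrm{BK}}))$ is produced from the two-variable $L$-function.
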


The point $\mathbf{P}\in{}A(\Q)\otimes\Q$ which appears in the statement  is a Heegner point, coming from a 
certain Shimura curve parametrisation of $A$ (see Section $\ref{BD formula}$).
Theorem A then compares two Euler systems of a different nature: Kato's  Euler system, belonging  to the \emph{cyclotomic} Iwasawa theory of $A$,
and the Euler system of Heegner points, which pertains to the \emph{anticyclotomic} Iwasawa 
theory of $A$ (and a suitable quadratic imaginary field).

The proof of Theorem A relies on Hida's theory of $p$-adic families of modular forms.  
Together with the work of Kato and Coleman--Perrin-Riou mentioned above, the exceptional zero formula 
proved by Bertolini and Darmon in \cite{B-D}, and \neko's theory of Selmer complexes \cite{Ne} are the key ingredients 
in our proof.

\begin{remm}\emph{$1.$ Assume that $L(A/\Q,s)$ has a simple zero at $s=1$. By the theorem of Gross--Zagier--Kolyvagin, 
$A(\Q)$ has rank one and $A(\Q)\otimes\Q_{p}=H^{1}_{f}(\Q,V_{p}(A))$ is generated by $\mathbf{P}$. 
By Theorem A, $\zeta^{\mathrm{BK}}$ is equal to $\log_{A}(\mathbf{P})\cdot{}\mathbf{P}$, up to a non-zero 
\emph{rational} factor.
According to \cite[Corollaire 2]{Ber-1}, $\log_{A}(\mathbf{P})\in{}\Q_{p}^{\ast}$ is transcendental over $\Q$, so that 
$\zeta^{\text{BK}}\not\in{}A(\Q)\otimes{}\overline{\Q}$.
In particular, $\zeta^{\mathrm{BK}}$ does not come from a rational point in $A(\Q)\otimes\Q$.
}

\emph{$2.$   Bertolini and Darmon have recently announced \cite{B-Dkato} a proof of Perrin-Riou's conjecture for elliptic curves with good ordinary 
reduction at $p$. Their approach, based on the $p$-adic Beilinson formula proved in \emph{loc. cit.}
and the $p$-adic Gross--Zagier formula proved in \cite{Be-Da-Pr}, is markedly different from ours. 
}
\end{remm}

Combining Theorem A, the results of Kato and Kolyvagin's method, we deduce the following result. 

\begin{theoremB} $\zeta^{\mathrm{BK}}$ is non-zero if and only if $\mathrm{ord}_{s=1}L(A/\Q,s)\leq{}1$.
\end{theoremB}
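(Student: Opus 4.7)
The plan is to split the biconditional into its two directions and to use Theorem~A as the bridge between $\zeta^{\mathrm{BK}}$ and the order of vanishing of $L(A/\Q,s)$ at $s=1$.

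For the ``if'' direction, assume $\mathrm{ord}_{s=1}L(A/\Q,s)\leq 1$. If $L(A/\Q,1)\neq 0$, then Kato's reciprocity law \eqref{eq:kato reciprocity} gives $\exp_{A}^{\ast}(\mathrm{res}_{p}(\zeta^{\mathrm{BK}}))=(1-1/p)L(A/\Q,1)/\Omega_{A}^{+}\neq 0$, so $\zeta^{\mathrm{BK}}\neq 0$. If instead $\mathrm{ord}_{s=1}L(A/\Q,s)=1$, then $\zeta^{\mathrm{BK}}$ is a Selmer class, Theorem~A.2 produces a non-zero point $\mathbf{P}\in A(\Q)\otimes\Q$, and since $A(\Q)\hookrightarrow A(\Q_{p})$ while $\log_{A}$ has torsion kernel we get $\log_{A}(\mathbf{P})\neq 0$; Theorem~A.1 then gives $\log_{A}(\mathrm{res}_{p}(\zeta^{\mathrm{BK}}))=\ell_{1}\cdot\log_{A}^{2}(\mathbf{P})\neq 0$, forcing $\zeta^{\mathrm{BK}}\neq 0$.

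For the converse I argue by contrapositive: assume $\mathrm{ord}_{s=1}L(A/\Q,s)\geq 2$ and deduce $\zeta^{\mathrm{BK}}=0$. Since $L(A/\Q,1)=0$, the class $\zeta^{\mathrm{BK}}$ is Selmer, and Theorem~A.2 forces $\mathbf{P}=0$ in $A(\Q)\otimes\Q$. Theorem~A.1 then collapses to $\log_{A}(\mathrm{res}_{p}(\zeta^{\mathrm{BK}}))=0$, and since at the split multiplicative prime $p$ the formal-group logarithm identifies the one-dimensional space $H^{1}_{f}(\Q_{p},V_{p}(A))$ with $\Q_{p}$, we conclude $\mathrm{res}_{p}(\zeta^{\mathrm{BK}})=0$. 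Thus $\zeta^{\mathrm{BK}}$ is a global Selmer class whose localisation at $p$ vanishes.

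The final step---and the main obstacle---is to promote this local-at-$p$ vanishing to global vanishing. Here Kato's Euler system and Kolyvagin's method intervene: the divisibility half of Kato's cyclotomic main conjecture, applied to the Iwasawa-theoretic Beilinson--Kato system $\zeta^{\mathrm{BK}}_{\infty}$ via the interpolation property \eqref{eq:kato reciprocity +} of $\mathcal{L}_{A}$, bounds the strict Selmer group $\ker\bigl(H^{1}_{f}(\Q,V_{p}(A))\to H^{1}(\Q_{p},V_{p}(A))\bigr)$ under the irreducibility hypothesis on $A_{p}$, forcing $\zeta^{\mathrm{BK}}=0$. The delicate point is that an \emph{unconditional} injectivity of $\mathrm{res}_{p}$ on the global Selmer group is essentially equivalent to finiteness of $\sha(A/\Q)[p^{\infty}]$, so the Iwasawa-theoretic route through Kolyvagin derivatives of $\zeta^{\mathrm{BK}}_{\infty}$ is what circumvents that unknown.
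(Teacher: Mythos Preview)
Your ``if'' direction is correct and matches the paper's argument exactly.

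The converse, however, has a genuine logical gap. Arguing by contrapositive, you assume $\mathrm{ord}_{s=1}L(A/\Q,s)\geq 2$, correctly deduce via Theorem~A that $\mathrm{res}_{p}(\zeta^{\mathrm{BK}})=0$, and then want to conclude $\zeta^{\mathrm{BK}}=0$ by invoking a bound on the strict Selmer group. But the bound you need---that the kernel of $\mathrm{res}_{p}$ on $H^{1}_{f}(\Q,V_{p}(A))$ vanishes---is \emph{not} an unconditional output of Kato's Euler system. Kolyvagin's method (Rubin, \emph{Euler Systems}, Theorem~2.3) requires precisely the non-vanishing of the bottom Euler-system class $\zeta^{\mathrm{BK}}$ as its \emph{input}. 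Your appeal to $\zeta^{\mathrm{BK}}_{\infty}$ and ``the divisibility half of the main conjecture'' does not circumvent this: although $\zeta^{\mathrm{BK}}_{\infty}\neq 0$ by Rohrlich, the Kolyvagin derivatives only control the Selmer group at the bottom layer through the specialisation $\zeta^{\mathrm{BK}}=\zeta^{\mathrm{BK}}_{0}$, so if that class vanishes you get no bound. As written, your argument silently assumes what it is trying to prove.

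The paper runs the implication in the other direction and thereby avoids the circularity: assume $\zeta^{\mathrm{BK}}\neq 0$; then Kolyvagin--Kato legitimately gives that the strict Selmer group $\{x\in H^{1}(\gaun,V_{p}(A)):\mathrm{res}_{p}(x)=0\}$ is trivial, hence $\mathrm{res}_{p}(\zeta^{\mathrm{BK}})\neq 0$; now Theorem~A (if $\zeta^{\mathrm{BK}}$ is Selmer) or Kato's reciprocity $(\ref{eq:kato reciprocity})$ (if not) forces $\mathrm{ord}_{s=1}L(A/\Q,s)\leq 1$. Your contrapositive can be repaired by inserting an explicit contradiction step---suppose $\zeta^{\mathrm{BK}}\neq 0$, run Kolyvagin, and contradict $\zeta^{\mathrm{BK}}\in\ker(\mathrm{res}_{p})$---but you must make the dependence of the Selmer bound on the hypothesis $\zeta^{\mathrm{BK}}\neq 0$ explicit.
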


\subsection{\textbf{$p$-adic Gross--Zagier formulae}}\label{intro 2} Let $\chi_{\mathrm{cyc}} : G_{\infty}\cong{}1+p\Z_{p}$
denote the $p$-adic cyclotomic character.  For every $s\in{}\Z_{p}$, set $L_{p}(A/\Q,s):=\chi_{\mathrm{cyc}}^{s-1}\big(L_{p}(A/\Q)\big)$.
Then $L_{p}(A/\Q,s)$ is a $p$-adic analytic function on $\Z_{p}$.  
Since $A$ has split multiplicative reduction at $p$,
the phenomenon  of exceptional  zeros discovered in \cite{M-T-T} implies that 
$L_{p}(A/\Q,1)=0$  independently of whether 
$L(A/\Q,s)$ vanishes or not at $s=1$. The \emph{exceptional zero conjecture} formulated in \emph{loc. cit.}
states that  $\mathrm{ord}_{s=1}L_{p}(A/\Q,s)=\mathrm{ord}_{s=1}L(A/\Q,s)+1$, and that the leading term in the Taylor expansion 
of $L_{p}(A/\Q,s)$ at $s=1$ equals, up to a non-zero  rational factor, the determinant of the lattice $A^{\dag}(\Q)/\mathrm{torsion}$, computed with
respect to  the extended cyclotomic $p$-adic height pairing. Here $A^{\dag}(\Q)$ is the extended Mordell--Weil group, whose elements are pairs 
$(P,y_{P})\in{}A(\Q)\times{}\Q_{p}^{\ast}$ such that $\Phi_{\mathrm{Tate}}(y_{P})=P$; it is an extension 
of $A(\Q)$ by the $\Z$-module generated by the Tate period $q_{A}=(0,q_{A})\in{}A^{\dag}(\Q)$.
When $L(A/\Q,1)\not=0$ the conjecture predicts 
\[
             \frac{d}{ds}L_{p}(A/\Q,s)_{s=1}=\mathscr{L}_{p}(A)\frac{L(A/\Q,1)}{\Omega_{A}^{+}},
\]
where $\mathscr{L}_{p}(A)=\log_{p}(q_{A})/\mathrm{ord}_{p}(q_{A})$ is the $\mathscr{L}$-invariant of $A/\Q_{p}$.
This formula was proved by Greenberg and Stevens in \cite{G-S}.
(We give a slightly different proof of it in Theorem $\ref{main GS}$ below.)

Our second aim in this paper is to prove (a large part of) the above exceptional zero conjecture when $\mathrm{ord}_{s=1}L(A/\Q,s)=1$
and, 
more generally, a two-variable $p$-adic Gross--Zagier formula for the Mazur--Kitagawa $p$-adic $L$-function 
of the Hida family attached to $A/\Q$. 
Let $f\in{}S_{2}(\Gamma_{0}(Np),\Z)$ be the weight-two newform associated with  $A/\Q$ by the modularity theorem, and let 
$f_{\infty}=\sum_{n=1}^{\infty}a_{n}(k)\cdot{}q^{n}\in{}\mathscr{A}_{U}\llbracket{}q\rrbracket$ be the Hida family passing through 
$f$. Here  $U\subset{}\Z_{p}$ is a $p$-adic disc centred at $2$, 
and $\mathscr{A}_{U}\subset{}\Q_{p}\llbracket{}k-2\rrbracket$
is the subring of power series in the variable $k-2$ which converge for  $k\in{}U$.
For every $k\in{}U\cap{}\Z^{\geq{}2}$, the $q$-expansion $f_{k}:=\sum_{n=1}^{\infty}a_{n}(k)\cdot{}q^{n}\in{}S_{k}(\Gamma_{1}(Np),\Z_{p})$ 
is an $N$-new $p$-ordinary 
Hecke eigenform of weight $k$, and $f_{2}=f$ (cf. Section $\ref{mellinhida}$). Thanks to the work of Mazur--Kitagawa \cite{Kit} and 
Greenberg--Stevens \cite{G-S}, the $p$-adic $L$-functions of the forms $f_{k}$, for $k\in{}U\cap{}\Z^{\geq{2}}$,
can be packaged into a single two-variable $p$-adic $L$-function $L_{p}(f_{\infty},k,s)\in{}\mathscr{A},$
where $\mathscr{A}\subset{}\Q_{p}\llbracket{}k-2,s-1\rrbracket$ is the ring of formal power series converging for 
every $(k,s)\in{}U\times{}\Z_{p}$ (cf. Section \ref{mellinhida}).
In particular one has $L_{p}(f_{\infty},2,s)=L_{p}(A/\Q,s)$ and the exceptional zero phenomenon implies  
that $L_{p}(f_{\infty},k,s)\in{}\mathscr{J}$, where $\mathscr{J}\subset{}\mathscr{A}$ is the ideal of functions vanishing at $(k,s)=(2,1)$.

Let $\exsel^{1}(\Q,V_{p}(A))$ be \emph{\neko's extended Selmer group}. It is a $\Q_{p}$-module, equipped with a 
natural inclusion 
$A^{\dag}(\Q)\otimes\Q_{p}\hookrightarrow{}\exsel^{1}(\Q,V_{p}(A))$, which is an 
isomorphism precisely when the  $p$-primary part of the  Tate--Shafarevich group of $A/\Q$
is finite.  
In general $\exsel^{1}(\Q,V_{p}(A))$ is canonically isomorphic to the 
direct sum of the Bloch--Kato Selmer group $H^{1}_{f}(\Q,V_{p}(A))$ and the $1$-dimensional vector space $\Q_{p}\cdot{}q_{A}$
generated by the Tate period of $A/\Q_{p}$ (see Section $\ref{Extended Selmer Section}$). 
Using \neko's results and ideas (especially \cite[Section 11]{Ne}), we introduce in Section $\ref{pairing section}$
a canonical $\Q_{p}$-bilinear form 
\[
         \hwp{-}{-}  : \exsel^{1}(\Q,V_{p}(A))\otimes_{\Q_{p}}\exsel^{1}(\Q,V_{p}(A))\lfre{}\mathscr{J}/\mathscr{J}^{2},
\]
called the \emph{cyclotomic height-weight pairing}. One can write 
\[
          \hwp{-}{-}=\dia{-,-}_{p}^{\mathrm{cyc}}\cdot{}\{s-1\}+\dia{-,-}^{\mathrm{wt}}_{p}\cdot{}\{k-2\},
\]
where $\dia{-,-}_{p}^{\mathrm{cyc}}$ and $\dia{-,-}_{p}^{\mathrm{wt}}$
are canonical $\Q_{p}$-valued pairings on $\exsel^{1}(\Q,V_{p}(A))$
and  $\{\cdot{}\} : \mathscr{J}\twoheadrightarrow{}\mathscr{J}/\mathscr{J}^{2}$ denotes  the  projection. 
It turns out that the restriction
\[
         \dia{-,-}^{\mathrm{cyc}}_{p}  : H^{1}_{f}(\Q,V_{p}(A))\otimes_{\Q_{p}}H^{1}_{f}(\Q,V_{p}(A))\lfre{}\Q_{p}
\]
of $\dia{-,-}^{\mathrm{cyc}}_{p}$ to the Bloch--Kato Selmer group is the cyclotomic
canonical $p$-adic height pairing, as defined, e.g. in \cite[Section 7]{Nekh}
(see Section \ref{basic properties} for more details). On the other hand, the \emph{weight pairing} 
$\dia{-,-}^{\mathrm{wt}}_{p}$ is intrinsically associated with  Hida's $p$-ordinary deformation 
of $T_{p}(A)$ (cf. Section $\ref{hida rep}$).
For every Selmer class $x\in{}H^{1}_{f}(\Q,V_{p}(A))$, def{}ine its \emph{extended $p$-adic height-weight}
\begin{equation}\label{eq:hwp definition}
           \widetilde{h}_{p}(x):=\det\begin{pmatrix}\hwp{q_{A}}{q_{A}} && \hwp{q_{A}}{x} \\ && \\ \hwp{x}{q_{A}} && \hwp{x}{x}\end{pmatrix}\in
           \mathscr{J}^{2}/\mathscr{J}^{3}.
\end{equation}

Let $\mathrm{sign}(A/\Q)\in\{\pm1\}$ be the sign 
in the functional equation of $L(A/\Q,s)$, and consider the condition
\[
   \mathbf{(Loc)}\ \  L(A/\Q,1)=0 \text{\ and the restriction map\ } \mathrm{res}_{p} : H^{1}_{f}(\Q,V_{p}(A))\fre{}A(\Q_{p})\widehat{\otimes}\Q_{p}
   \text{ is non-zero}. 
\]
The work of Gross--Zagier--Kolyvagin guarantees that this condition is satisfied when $A(\Q)$ is infinite
and (in particular) when $L(A/\Q,s)$
has a simple zero at $s=1$.
We can finally state the two-variable $p$-adic Gross--Zagier formula mentioned above.

\begin{theoremC} Assume that $\mathrm{sign}(A/\Q)=-1$ and that $\mathbf{(Loc)}$ holds true.
Let $\mathbf{P}\in{}A(\Q)\otimes\Q$ be as in Theorem A. 
Then $L_{p}(f_{\infty},k,s)\in{}\mathscr{J}^{2}$ and
there exists a non-zero rational number $\ell_{2}\in{}\Q^{\ast}$ such that  
\[
           L_{p}(f_{\infty},k,s)\ \ \mathrm{mod}\ \mathscr{J}^{3}=\ell_{2}\cdot{}\widetilde{h}_{p}(\mathbf{P}).
\]
Moreover, $L_{p}(f_{\infty},k,s)\in{}\mathscr{J}^{3}$ if and only if $\mathbf{P}=0$
(i.e. $L(A/\Q,s)$ vanishes to order greater than one at $s=1$).
\end{theoremC}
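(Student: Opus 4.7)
The plan is to use the functional equation of $L_p(f_\infty,k,s)$ along the central line $s=k/2$ (forced by $\mathrm{sign}(A/\Q)=-1$) to factor out the obvious zero, identify the two remaining Taylor coefficients from the one-variable Greenberg--Stevens formula and the Bertolini--Darmon formula transverse to the central line, and assemble them into the determinant $\widetilde{h}_p(\mathbf{P})$.

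The Atkin--Lehner eigenvalue $-1$ propagates through the Hida family, so the Mazur--Kitagawa functional equation yields $L_p(f_\infty,k,k/2) \equiv 0$ and hence an analytic factorisation
\[
L_p(f_\infty,k,s) = \lri{s-k/2}\cdot{}M(k,s),\qquad M\in\mathscr{A}.
\]
Setting $k=2$, one reads $L_p(A/\Q,s)=(s-1)M(2,s)$, so $M(2,1)=L'_p(A/\Q,1)$. By the Greenberg--Stevens formula (Theorem \ref{main GS}) and the hypothesis $L(A/\Q,1)=0$ contained in $\mathbf{(Loc)}$, this quantity equals $\mathscr{L}_p(A)L(A/\Q,1)/\Omega_A^+$ and therefore vanishes, so $M\in\mathscr{J}$ and $L_p(f_\infty,k,s)\in\mathscr{J}^2$. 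Introducing coordinates $u=k-2$, $v=s-1$ and writing $M \equiv \alpha u + \beta v \pmod{\mathscr{J}^2}$ with $\alpha=\partial_kM(2,1)$, $\beta=\partial_sM(2,1)$, one gets
\[
L_p(f_\infty,k,s) \equiv \lri{v-u/2}\lri{\alpha u + \beta v} \pmod{\mathscr{J}^3}.
\]
Since $\hwp{x}{y} = \dia{x,y}_p^{\mathrm{cyc}}v + \dia{x,y}_p^{\mathrm{wt}}u$, the determinant $\widetilde{h}_p(\mathbf{P})$ likewise produces a quadratic form in $(u,v)$; matching the two amounts to computing $\alpha$ and $\beta$ in terms of the entries of $\widetilde{h}_p(\mathbf{P})$.

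The coefficient $\beta=\tfrac{1}{2}L''_p(A/\Q,1)$ is identified with the cyclotomic contribution to $\widetilde{h}_p(\mathbf{P})$ by a rank-one refinement of Greenberg--Stevens: using \neko{}'s Selmer complex machinery \cite{Ne} together with Theorem A to relate the derived Beilinson--Kato class to $\mathbf{P}$, one expresses $L''_p(A/\Q,1)$ as the determinant of the extended cyclotomic $p$-adic height pairing on $A^\dag(\Q)\otimes\Q_p$. The sum $\alpha+\beta/2$, which equals the derivative at $k=2$ of the Bertolini--Darmon improved $p$-adic $L$-function $G(k):=\partial_sL_p(f_\infty,k,s)|_{s=k/2}$, is computed via the Bertolini--Darmon exceptional zero formula \cite{B-D}, giving a non-zero rational multiple of $\log_A^2(\mathbf{P})$, which matches the weight contribution. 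The non-vanishing assertion then follows from non-degeneracy of $\widetilde{h}_p(\mathbf{P})$ for $\mathbf{P}\neq 0$: the weight pairing $\dia{q_A,q_A}_p^{\mathrm{wt}}$ is a non-zero rational multiple of $\mathscr{L}_p(A)\neq 0$, and $\log_A(\mathbf{P})$ is transcendental for $\mathbf{P}\neq 0$ by the remark after Theorem A. The main obstacle is the careful construction of the weight pairing $\dia{-,-}_p^{\mathrm{wt}}$ from \neko{}'s Selmer complex applied to Hida's ordinary deformation $\defot$, and the verification that the cyclotomic (Greenberg--Stevens) and transverse (Bertolini--Darmon) derivative computations assemble coherently into the single determinantal form $\widetilde{h}_p(\mathbf{P})$ --- a compatibility statement between the big dual exponential $\mathcal{L}_A$ and the extended height-weight pairing evaluated at the derived Beilinson--Kato class.
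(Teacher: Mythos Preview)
Your very first step is wrong, and the whole strategy collapses with it. You claim that $\mathrm{sign}(A/\Q)=-1$ forces $L_p(f_\infty,k,k/2)\equiv 0$, giving a factorisation $L_p(f_\infty,k,s)=(s-k/2)M(k,s)$. But the functional equation in the paper reads
\[
\Lambda_p(f_\infty,k,s)=-\mathrm{sign}(A/\Q)\cdot\Lambda_p(f_\infty,k,k-s),
\]
so it is precisely when $\mathrm{sign}(A/\Q)=+1$ that $L_p^{\mathrm{cc}}(f_\infty,k)$ vanishes identically; when the sign is $-1$ there is no such constraint. In fact the Bertolini--Darmon formula (Theorem~\ref{main Bertolini-Darmon}) says $\frac{d^2}{dk^2}L_p^{\mathrm{cc}}(f_\infty,k)_{k=2}=\ell\cdot\log_A^2(\mathbf{P})$, which is \emph{non-zero} whenever $\mathbf{P}\neq 0$. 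So $(s-k/2)$ does not divide $L_p(f_\infty,k,s)$ in the interesting case, and none of the subsequent identifications of $\alpha,\beta$ make sense.

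The paper's route is quite different and does not attempt to compute the three Taylor coefficients of $L_p(f_\infty,k,s)\bmod\mathscr{J}^3$ separately. The engine is the Rubin-style formula (Theorem~\ref{probably main paper})
\[
\log_A\big(\mathrm{res}_p(\zeta^{\mathrm{BK}})\big)\cdot L_p(f_\infty,k,s)\ \bmod\ \mathscr{J}^3
=\frac{-1}{\mathrm{ord}_p(q_A)}\Big(1-\tfrac{1}{p}\Big)^{-1}\widetilde{h}_p(\zeta^{\mathrm{BK}}),
\]
which comes from a direct computation of the Bockstein map on \neko's Selmer complex and the derivative of Ochiai's big dual exponential (Theorem~\ref{mainderalg}, Corollary~\ref{mainderalgdisg}, Theorem~\ref{p-adic GZ}). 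This gives all of $L_p\bmod\mathscr{J}^3$ at once in terms of $\widetilde{h}_p(\zeta^{\mathrm{BK}})$. One then uses Theorem~A together with Gross--Zagier--Kolyvagin (rank one) to write $\zeta^{\mathrm{BK}}=\lambda\cdot\mathbf{P}$, so $\widetilde{h}_p(\zeta^{\mathrm{BK}})=\lambda^2\widetilde{h}_p(\mathbf{P})$, and the factor $\lambda=\log_A(\zeta^{\mathrm{BK}})/\log_A(\mathbf{P})$ cancels via Theorem~A. The case $\mathbf{P}=0$ is handled separately (Theorem~B gives $\zeta^{\mathrm{BK}}=0$, Lemma~\ref{cruclele} forces $L_p''(A/\Q,1)=0$, and the functional equation plus Bertolini--Darmon kill the remaining second partials), and the non-vanishing when $\mathbf{P}\neq 0$ is immediate from Bertolini--Darmon since $L_p^{\mathrm{cc}}$ then has exact order $2$ at $k=2$.
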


\noindent 1.2.1. \emph{Application to the exceptional zero conjecture.} 
Recalling that $\log_{p}(q_{A})\not=0$ by \cite{M-man}, define the \emph{Schneider height} 
\[
                \dia{-,-}_{p}^{\mathrm{Sch}} : H^{1}_{f}(\Q,V_{p}(A))\otimes_{\Q_{p}}H^{1}_{f}(\Q,V_{p}(A))\lfre{}\Q_{p}
\]
as the symmetric, $\Q_{p}$-bilinear form which for $x,y\in{}H^{1}_{f}(\Q,V_{p}(A))$ is given by the formula 
\[
         \dia{x,y}^{\mathrm{Sch}}_{p}:=\dia{x,y}^{\mathrm{cyc}}_{p}-\frac{\log_{A}\big(\mathrm{res}_{p}(x)\big)\cdot{}\log_{A}\big(\mathrm{res}_{p}(y)\big)}
         {\log_{p}(q_{A})}.
\]
The terminology is justif{}ied by the fact that 
$\dia{-,-}_{p}^{\mathrm{Sch}}$ is the norm-adapted height constructed in \cite{Sch-1}
(cf. Section 7.14 of \cite{Ne} and Chapter II, \S6 of \cite{M-T-T}).
As a consequence of Theorem C and the properties of $\hwp{-}{-}$, one deduces the following
$p$-adic Gross--Zagier formula for $L_{p}(A/\Q,s)$, predicted by \emph{Conjecture BSD(p)-exceptional case} 
in \cite[Chapter II, \S10]{M-T-T}.

\begin{theoremD} Assume that $\mathbf{(Loc)}$ holds true and let $\mathbf{P}\in{}A(\Q)\otimes\Q$ be as in Theorem A.
Then $L_{p}(A/\Q,s)$ vanishes to order at least $2$ at $s=1$, and there exists a non-zero rational number $\ell_{3}\in{}\Q^{\ast}$ such that 
\[
             \frac{d^{2}}{ds^{2}}L_{p}(A/\Q,s)_{s=1}=\ell_{3}\cdot{}\mathscr{L}_{p}(A)\cdot\dia{\mathbf{P},\mathbf{P}}_{p}^{\mathrm{Sch}}.
\]
\end{theoremD}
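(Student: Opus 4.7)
The plan is to derive Theorem D from Theorem C by specialising the two-variable formula at $k=2$ and unwinding the resulting $2\times 2$ extended height-weight determinant. The case $\mathrm{sign}(A/\Q)=+1$ is handled separately: $\mathbf{(Loc)}$ then forces $L(A/\Q,s)$ to vanish to order at least two at $s=1$, whence $\mathbf{P}=0$ by Theorem A; on the other hand, the $p$-adic functional equation makes $L_p(A/\Q,s)$ an odd function around $s=1$, so all its even derivatives there vanish and both sides of the asserted formula are zero. I therefore assume $\mathrm{sign}(A/\Q)=-1$, so that Theorem C applies.

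From
\[
   L_p(f_\infty,k,s)\equiv \ell_2\cdot\widetilde{h}_p(\mathbf{P})\pmod{\mathscr{J}^3},
\]
setting $k=2$ and using $L_p(f_\infty,2,s)=L_p(A/\Q,s)$ together with $\mathscr{J}^3|_{k=2}\subseteq(s-1)^3\Q_p\llbracket s-1\rrbracket$ already yields $\mathrm{ord}_{s=1}L_p(A/\Q,s)\geq 2$, and reduces the problem to computing $\widetilde{h}_p(\mathbf{P})|_{k=2}$ modulo $(s-1)^3$. The canonical decomposition $\hwp{x}{y}=\dia{x,y}_p^{\mathrm{cyc}}\{s-1\}+\dia{x,y}_p^{\mathrm{wt}}\{k-2\}$, inserted into \eqref{eq:hwp definition}, kills the weight contribution after setting $k=2$ and leaves
\[
   \widetilde{h}_p(\mathbf{P})|_{k=2}\equiv (s-1)^2\det\!\begin{pmatrix}
   \dia{q_A,q_A}_p^{\mathrm{cyc}} & \dia{q_A,\mathbf{P}}_p^{\mathrm{cyc}}\\
   \dia{\mathbf{P},q_A}_p^{\mathrm{cyc}} & \dia{\mathbf{P},\mathbf{P}}_p^{\mathrm{cyc}}
   \end{pmatrix}\pmod{(s-1)^3}.
\]

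The remaining step is the explicit evaluation of the extended cyclotomic pairing along the Tate period. The identities
\[
   \dia{q_A,q_A}_p^{\mathrm{cyc}}=\log_p(q_A),\qquad \dia{q_A,x}_p^{\mathrm{cyc}}=\log_A\!\big(\mathrm{res}_p(x)\big)\text{ for }x\in H^1_f(\Q,V_p(A)),
\]
to be established in Section \ref{basic properties} by unwinding the Selmer complex defining $\exsel^1(\Q,V_p(A))$ and matching \neko's local duality pairing at $p$ with the two $p$-adic logarithms, combined with the very definition of $\dia{-,-}_p^{\mathrm{Sch}}$, give
\[
   \det=\log_p(q_A)\cdot\dia{\mathbf{P},\mathbf{P}}_p^{\mathrm{Sch}}=\mathrm{ord}_p(q_A)\cdot\mathscr{L}_p(A)\cdot\dia{\mathbf{P},\mathbf{P}}_p^{\mathrm{Sch}},
\]
where $\log_p(q_A)\neq 0$ by \cite{M-man}. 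Reading off the coefficient of $(s-1)^2$ in the Taylor expansion of $L_p(A/\Q,s)$ then produces the formula with $\ell_3=2\ell_2\cdot\mathrm{ord}_p(q_A)\in\Q^{*}$. The main obstacle is the local identification of the two scalars above: both $\dia{q_A,q_A}_p^{\mathrm{cyc}}$ and $\dia{q_A,x}_p^{\mathrm{cyc}}$ encode, by construction, a Bockstein-type contribution at $p$ which must be tied to $\log_p$ and $\log_A$ with the correct sign and normalisation. Once these two scalar identities are in place, the deduction of Theorem D from Theorem C is the $2\times 2$ linear-algebra computation displayed above.
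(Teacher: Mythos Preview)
Your approach is essentially the paper's: split on $\mathrm{sign}(A/\Q)$, and in the $-1$ case restrict Theorem~C to $k=2$, evaluate the $2\times 2$ determinant via the exceptional-zero identities of Theorem~$\ref{mainreg}(2)$, and rewrite $\log_p(q_A)=\mathrm{ord}_p(q_A)\cdot\mathscr{L}_p(A)$ to extract $\ell_3$. The determinant computation and the identification with the Schneider height are exactly as in the paper.

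There is, however, a small gap in your treatment of the $\mathrm{sign}(A/\Q)=+1$ case. Odd symmetry of $L_p(A/\Q,s)$ about $s=1$ only kills the \emph{even} Taylor coefficients; it does not by itself force the linear term to vanish, so you have not established the assertion $\mathrm{ord}_{s=1}L_p(A/\Q,s)\geq 2$ that is part of the statement. The paper fills this with one extra line: since $\mathbf{(Loc)}$ includes $L(A/\Q,1)=0$, Kato's reciprocity $(\ref{eq:kato reciprocity})$ gives $\exp_A^*(\mathrm{res}_p(\zeta^{\mathrm{BK}}))=0$, and then Theorem~$\ref{main GS}$ yields $L_p(f_\infty,k,s)\in\mathscr{J}^2$, hence $\frac{d}{ds}L_p(A/\Q,s)_{s=1}=0$. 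Combined with odd parity this in fact gives order $\geq 3$. Once you insert this, your argument is complete. (One further minor point: your determinant uses $\dia{\mathbf{P},q_A}_p^{\mathrm{cyc}}$ in the lower-left entry, but you only quote the identity for $\dia{q_A,x}_p^{\mathrm{cyc}}$; the equality of the two follows from the functional equation Theorem~$\ref{mainreg}(3)$ specialised to $k=2$, which makes $\dia{-,-}_p^{\mathrm{cyc}}$ symmetric on the extended Selmer group.)
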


The preceding result enriches our repertoire of $p$-adic Gross--Zagier formulae for cyclotomic and anticyclotomic 
$p$-adic $L$-functions of elliptic curves, which already includes  the main results of \cite{PRpbsd}, \cite{B-Dcd} and \cite{KobGZ}.

As $\mathscr{L}_{p}(A)\not=0$, Theorem D implies that  
$\mathrm{ord}_{s=1}L_{p}(A/\Q,s)=2$ precisely if $\mathrm{ord}_{s=1}L(A/\Q,s)=1$ \emph{and} $\dia{-,-}_{p}^{\mathrm{Sch}}$
is non-zero.
On the other hand, 
it is not known that the Schneider height  is non-zero when 
$L(A/\Q,s)$ has a simple zero at $s=1$.\vspace{1mm}

\noindent1.2.2. \emph{The derivative of the improved $p$-adic $L$-function.}\label{improvder} As explained in \cite{G-S}, 
the restriction of $L_{p}(f_{\infty},k,s)$ to the vertical line $s=1$ admits a factorisation
$L_{p}(f_{\infty},k,1)=\lri{1-a_{p}(k)^{-1}}\cdot{}L_{p}^{\ast}(f_{\infty},k)$
in $\mathscr{A}_{U}$.
The results of  \cite{G-S} and \cite{M-man}
imply that the function $1-a_{p}(k)^{-1}$ has a simple zero 
at $k=2$. 
The following 
$p$-adic Gross--Zagier formula for  the  \emph{improved $p$-adic $L$-function} 
$L_{p}^{\ast}(f_{\infty},k)$ is again a consequence of 
Theorem C and the properties of the height-weight pairing.

\begin{theoremE} Assume that hypothesis $\mathbf{(Loc)}$ holds and that $\mathrm{sign}(A/\Q)=-1$.
Let $\mathbf{P}\in{}A(\Q)\otimes\Q$ be as in Theorem A.
Then $L_{p}^{\ast}(f_{\infty},2)=0$ and there exists a non-zero rational number $\ell_{4}\in{}\Q^{\ast}$ such that 
\[
                          -\ell_{4}\cdot{}\dia{\mathbf{P},\mathbf{P}}^{\mathrm{cyc}}_{p}=
                          \frac{d}{dk}L_{p}^{\ast}(f_{\infty},k)_{k=2}=2\ell_{4}\cdot{}\dia{\mathbf{P},\mathbf{P}}^{\mathrm{wt}}_{p}.
\]
\end{theoremE}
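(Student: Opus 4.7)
The plan is to combine Theorem C with the factorization $L_p(f_\infty,k,1)=(1-a_p(k)^{-1})L_p^*(f_\infty,k)$ and with the explicit structure of the weight component of the height-weight pairing. Writing $X=k-2$ and $Y=s-1$, so that $\mathscr{J}=(X,Y)$, one has the decomposition
\[
\hwp{x}{y}\equiv \dia{x,y}_p^{\mathrm{cyc}}\cdot Y+\dia{x,y}_p^{\mathrm{wt}}\cdot X\pmod{\mathscr{J}^2},
\]
from which the relevant height-weight quantities can be recovered by setting $X=0$ (cyclotomic direction) or $Y=0$ (weight direction).

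The vanishing $L_p^*(f_\infty,2)=0$ is immediate. Theorem C gives $L_p(f_\infty,k,s)\in \mathscr{J}^2$, hence $L_p(f_\infty,k,1)\in (X^2)\mathscr{A}_U$; on the other hand $1-a_p(k)^{-1}$ vanishes to exact order one at $k=2$, since the Greenberg--Stevens formula gives $(d/dk)(1-a_p(k)^{-1})|_{k=2}=-\mathscr{L}_p(A)/2$, which is nonzero thanks to $\log_p(q_A)\ne 0$ \cite{M-man}. Dividing, $L_p^*(f_\infty,k)\in (X)\mathscr{A}_U$.

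For the first equality, I would restrict the determinantal expression \eqref{eq:hwp definition} to $Y=0$; this annihilates the cyclotomic contribution to every entry, so
\[
\widetilde{h}_p(\mathbf{P})\big|_{s=1}\equiv \det M_{\mathrm{wt}}\cdot X^2\pmod{(X^3)},
\]
where $M_{\mathrm{wt}}$ denotes the $2\times 2$ matrix of $\dia{-,-}_p^{\mathrm{wt}}$-values on the pair $(q_A,\mathbf{P})$. Theorem C then gives $L_p(f_\infty,k,1)\equiv \ell_2\det M_{\mathrm{wt}}\cdot(k-2)^2\pmod{(k-2)^3}$; equating this to $(-\mathscr{L}_p(A)/2)(k-2)\cdot(d/dk)L_p^*(f_\infty,k)|_{k=2}\cdot(k-2)+O((k-2)^3)$ yields $(d/dk)L_p^*(f_\infty,k)|_{k=2}=-2\ell_2\det M_{\mathrm{wt}}/\mathscr{L}_p(A)$. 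The final step is to identify $\det M_{\mathrm{wt}}$ with a rational multiple of $\mathscr{L}_p(A)\cdot\dia{\mathbf{P},\mathbf{P}}_p^{\mathrm{wt}}$: the diagonal entry $\dia{q_A,q_A}_p^{\mathrm{wt}}$ should be a nonzero rational multiple of $\mathscr{L}_p(A)$ (intimately tied to the Greenberg--Stevens formula), while the off-diagonal $\dia{q_A,\mathbf{P}}_p^{\mathrm{wt}}$ should be controlled by $\log_A(\mathrm{res}_p\mathbf{P})$ in such a way that the determinant collapses. After cancellation of the $\mathscr{L}_p(A)$-factors, the first equality of Theorem E drops out.

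For the second equality I would perform the symmetric computation at $X=0$: restricting $\widetilde{h}_p(\mathbf{P})$ to $k=2$ expresses $(d^2/ds^2)L_p(A/\Q,s)|_{s=1}$ as $2\ell_2\det M_{\mathrm{cyc}}$, where $M_{\mathrm{cyc}}$ is the analogous cyclotomic matrix; Theorem D independently identifies the same quantity with $\ell_3\mathscr{L}_p(A)\dia{\mathbf{P},\mathbf{P}}_p^{\mathrm{Sch}}$. Comparing the two descriptions—via the analogous collapse of $M_{\mathrm{cyc}}$ governed by the extended cyclotomic heights of $q_A$ (formulas of the form $\dia{q_A,q_A}_p^{\mathrm{cyc}}\propto \log_p(q_A)\cdot\mathrm{ord}_p(q_A)$ and $\dia{q_A,\mathbf{P}}_p^{\mathrm{cyc}}\propto \mathrm{ord}_p(q_A)\cdot\log_A(\mathrm{res}_p\mathbf{P})$)—yields the identification $\dia{\mathbf{P},\mathbf{P}}_p^{\mathrm{wt}}=-\frac{1}{2}\dia{\mathbf{P},\mathbf{P}}_p^{\mathrm{cyc}}$ up to the appropriate rational factors, equivalently the second equality. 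The main obstacle is the explicit computation of the weight-pairing entries involving $q_A$, which ultimately rests on Nekov\'a\v{r}'s theory of Selmer complexes applied to the Hida deformation of $T_p(A)$ and on the interaction of the weight variable with the ordinary filtration at $p$; pinning down the precise rational factor linking $\dia{q_A,q_A}_p^{\mathrm{wt}}$ to $\mathscr{L}_p(A)$ is the technical heart of the argument.
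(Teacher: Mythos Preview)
Your overall architecture is right: restrict Theorem C to $s=1$, use the factorisation $L_{p}(f_{\infty},k,1)=(1-a_{p}(k)^{-1})L_{p}^{\ast}(f_{\infty},k)$ together with the Greenberg--Stevens derivative, and identify the weight restriction of $\widetilde{h}_{p}(\mathbf{P})$. This is exactly what the paper does. The problem is that you have mislocated the difficulty.

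For the first equality, you say the off-diagonal entry $\dia{q_{A},\mathbf{P}}_{p}^{\mathrm{wt}}$ ``should be controlled by $\log_{A}(\mathrm{res}_{p}\mathbf{P})$ in such a way that the determinant collapses'' and that computing the $q_{A}$-entries is ``the technical heart''. But these entries are already given explicitly by the exceptional zero formulae of Theorem~$\ref{mainreg}(2)$: one has $\hwp{q_{A}}{\mathbf{P}}=\log_{A}(\mathbf{P})\{s-1\}$, hence $\dia{q_{A},\mathbf{P}}_{p}^{\mathrm{wt}}=0$ exactly, and $\hwp{q_{A}}{q_{A}}=\log_{p}(q_{A})\{s-k/2\}$, hence $\dia{q_{A},q_{A}}_{p}^{\mathrm{wt}}=-\tfrac{1}{2}\log_{p}(q_{A})$. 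So the weight matrix is lower triangular with determinant $-\tfrac{1}{2}\log_{p}(q_{A})\cdot\dia{\mathbf{P},\mathbf{P}}_{p}^{\mathrm{wt}}$; after dividing by $\mathscr{L}_{p}(A)=\log_{p}(q_{A})/\mathrm{ord}_{p}(q_{A})$ from the Greenberg--Stevens factor, one obtains $\ell_{4}=\ell_{2}\cdot\mathrm{ord}_{p}(q_{A})\in\Q^{\ast}$ directly. There is no further input from \neko's theory needed at this step.

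For the second equality your proposed detour through Theorem D and a comparison of $M_{\mathrm{cyc}}$ with $M_{\mathrm{wt}}$ does not work as stated: restricting to $k=2$ kills the weight component entirely, so that computation cannot by itself produce a relation between $\dia{\mathbf{P},\mathbf{P}}_{p}^{\mathrm{wt}}$ and $\dia{\mathbf{P},\mathbf{P}}_{p}^{\mathrm{cyc}}$. The paper instead uses the functional equation of Theorem~$\ref{mainreg}(3)$: applying $\hwp{\mathbf{P}}{\mathbf{P}}(k,s)=-\hwp{\mathbf{P}}{\mathbf{P}}(k,k-s)$ to the linear form $c\{s-1\}+w\{k-2\}$ and using $k-s-1\equiv(k-2)-(s-1)$ gives $w=-c-w$, i.e.\ $2\dia{\mathbf{P},\mathbf{P}}_{p}^{\mathrm{wt}}=-\dia{\mathbf{P},\mathbf{P}}_{p}^{\mathrm{cyc}}$ in one line. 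That is the missing ingredient in your argument.
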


\subsection{Outline of the proofs}\label{outline of the proof} We  brief{}ly sketch  the strategy of the  proofs of Theorems A and C,
assuming for simplicity that $L(A/\Q,s)$ has a simple zero at $s=1$.

Denote by $L_{p}^{\mathrm{cc}}(f_{\infty},k):=L_{p}(f_{\infty},k,k/2)\in{}\mathscr{A}_{U}$  the restriction of $L_{p}(f_{\infty},k,s)$
to the   \emph{central critical line} $s=k/2$.\\
According to the   exceptional zero formula proved by Bertolini--Darmon  in \cite{B-D},
$L_{p}^{\mathrm{cc}}(f_{\infty},k)$ has order of vanishing $2$ at $k=2$ and \vspace{-1mm}
\begin{equation}\label{eq:main BD introduction}
          \frac{d^{2}}{dk^{2}}L_{p}^{\mathrm{cc}}(f_{\infty},k)_{k=2}=\ell\cdot{}\log_{A}^{2}(\mathbf{P}),
\end{equation}
where $\ell\in{}\Q^{\ast}$ and $\mathbf{P}\in{}A(\Q)\otimes\Q$ is a  Heegner point. 
(See Section $\ref{BD formula}$ for more details.)

On the algebraic side, write $\widetilde{h}^{\mathrm{cc}}_{p} : H^{1}_{f}(\Q,V_{p}(A))\fre{}\Q_{p}$
for the composition of the extended height-weight $\widetilde{h}_{p}$ with the morphism 
$\mathscr{J}^{2}/\mathscr{J}^{3}\twoheadrightarrow{}\Q_{p}$
which on the class of $\alpha(k,s)\in{}\mathscr{J}^{2}$ takes the value $\frac{d^{2}}{dk^{2}}\alpha(k,k/2)_{k=2}$.
The properties satisfied by the height-weight pairing (cf. Theorem $\ref{mainreg}$) yield
\begin{equation}\label{eq:height s=k/2}
          \widetilde{h}_{p}^{\mathrm{cc}}(x)=\frac{1}{2}\log_{A}^{2}\big(\mathrm{res}_{p}(x)\big),
\end{equation}
for every Selmer class $x$.
Equation $(\ref{eq:main BD introduction})$ can then be rephrased as the  $p$-adic Gross--Zagier formula
\begin{equation}\label{eq:existing GZ}
                \frac{d^{2}}{dk^{2}}L_{p}^{\mathrm{cc}}(f_{\infty},k)_{k=2}=2\ell\cdot{}\widetilde{h}_{p}^{\mathrm{cc}}(\mathbf{P}).
\end{equation}
This shows that the formula displayed in Theorem C  holds true, once one 
restricts  both $L_{p}(f_{\infty},k,s)$ and $\widetilde{h}_{p}(\mathbf{P})$
to the central critical line $s=k/2$. 
Instead of  trying to extend $(\ref{eq:existing GZ})$ to the $(k,s)$-plane directly, we first prove an  analogue of  Theorem C,  in which   the 
Heegner point $\mathbf{P}$ is replaced by the Beilinson--Kato class $\zeta^{\mathrm{BK}}$.
Precisely, making use of the work of Kato and Ochiai, we prove in Section $\ref{Rubin GZ}$
the equality in $\mathscr{J}^{2}/\mathscr{J}^{3}$:
\begin{equation}\label{eq:main intro}
         \log_{A}\big(\mathrm{res}_{p}\big(\zeta^{\mathrm{BK}}\big)\big)\cdot{} L_{p}(f_{\infty},k,s)\ 
         \ \mathrm{mod}\ \mathscr{J}^{3}=
         \frac{-1}{\mathrm{ord}_{p}(q_{A})}\lri{1-\frac{1}{p}}^{-1}\cdot{}\widetilde{h}_{p}(\zeta^{\mathrm{BK}}).
\end{equation}
%
Combined with $(\ref{eq:main BD introduction})$ and  $(\ref{eq:height s=k/2})$, this gives 
\[
           \log_{A}^{2}\big(\mathrm{res}_{p}\big(\zeta^{\mathrm{BK}}\big)\big)=\ell_{1}\cdot{}\log_{A}^{2}(\mathbf{P})\cdot{}
           \log_{A}\big(\mathrm{res}_{p}\big(\zeta^{\mathrm{BK}}\big)\big),
\]
where $\ell_{1}:=-2\ell\cdot{}\mathrm{ord}_{p}(q_{A}){}\big(1-p^{-1}\big)$.
We then show that  $\mathrm{res}_{p}\big(\zeta^{\mathrm{BK}}\big)\not=0$ and deduce Theorem A.
Now, thanks to the   
theorem of  Gross--Zagier--Kolyvagin, one has  $\zeta^{\mathrm{BK}}=\lambda\cdot{}\mathbf{P}$,
with $\lambda=\log_{A}\big(\mathrm{res}_{p}\big(\zeta^{\mathrm{BK}}\big)\big)
/\log_{A}(\mathbf{P})\in{}\Q_{p}^{\ast}$. Then $\widetilde{h}_{p}(\zeta^{\mathrm{BK}})=\lambda^{2}\cdot{}\widetilde{h}_{p}(\mathbf{P})$.
If one sets  $\ell_{2}:=2\ell$, Theorem A and equation $(\ref{eq:main intro})$ yield Theorem C, namely
\[
              L_{p}(f_{\infty},k,s)\ \mathrm{mod}\ \mathscr{J}^{3}=\ell_{2}\cdot{}\widetilde{h}_{p}(\mathbf{P}).\vspace{2mm}
\]

\vspace{-5mm}
\ \\
\emph{Organisation of the paper.} 
Section 2 recalls the known results needed in the rest of the paper.
This includes some basic facts from Hida's theory, 
the main result of \cite{B-D} mentioned above, 
Ochiai's construction of a  \emph{two variable  big dual exponential} and  a general version of 
Kato's reciprocity law.
In Section 3 we compute the \emph{derivative} of Ochiai's big dual exponential.
Section 4 introduces the height-weight pairing $\hwp{-}{-}$ and discusses its basic properties. In Section 5, 
we use the computations carried out in Section 3 to prove
certain \emph{exceptional zero Rubin's formulae}, 
relating the big dual exponential and the height-weight pairing. Combining these formulae with Kato's work,
we are able to prove a variant of the main result of \cite{G-S} and to prove the key equality $(\ref{eq:main intro})$ appearing above.
Finally, in Section 6 we prove the results stated above.\vspace{2mm}\\
\emph{Acknowledgements.}
Much of the work on this article was carried out during my Ph.D. at the University of Milan.
It is a pleasure to express my sincere gratitude to my supervisor, Prof. Massimo Bertolini, who constantly
encouraged and motivated my work. Every meeting with him has been a source of ideas and enthusiasm;
this paper surely originated from and grew up through these meetings.
I would like to thank Marco Seveso for a careful reading of the paper and for many interesting discussions related to this work.
I am also grateful to the anonymous referee; the current version of the article is greatly inspired by
his/her corrections and valuable comments, which  helped me 
to significantly clarify and  improve the exposition.


\section{Hida families, exceptional zeros and Euler systems}\label{hidath}

\subsection{The Hida family}\label{hifa} 
Set $\Gamma:=1+p\mathbf{Z}_{p}$ and $\iw:=\mathbf{Z}_{p}\llbracket{}\Gamma\rrbracket$.
Let $C$ be a finite,  flat $\hid$-algebra.
A continuous  $\mathbf{Z}_{p}$-algebra morphism
$\nu : C\fre{}\overline{\mathbf{Q}}_{p}$ is an \emph{arithmetic point} of \emph{weight} $k$ and \emph{character} $\chi$ 
if its restriction to $\Gamma$ under the structural morphism 
is of the form $\gamma\mapsto{}\gamma^{k-2}\cdot{}\chi(\gamma)$,
for an integer $k\geq{}2$  and a  character  $\chi : \Gamma\fre{}\overline{\mathbf{Q}}_{p}^{\ast}$
of finite order. Denote by
$\xari(C)$
the  set of arithmetic points of $C$.
 
Let  $f=\sum_{n=1}^{\infty}a_{n}(A)\cdot{}q^{n}\in{}S_{2}(\Gamma_{0}(Np),\mathbf{Z})$ be the weight-two newform attached to $A/\mathbf{Q}$
by the modularity theorem of Wiles, Taylor--Wiles \emph{et alii}. 
According to the work of  Hida \cite{H-1}, \cite{H-2} there exists 
an \emph{$R$-adic  eigenform} of tame level $N$:
\[
                \mathbf{f}=\sum_{n=1}^{\infty}\mathbf{a}_{n}\cdot{}q^{n}\in{}R\llbracket{}q\rrbracket
\]
passing through $f$. Here $R=R_{f}$ is a \emph{normal} local Noetherian domain, finite and flat over $\hid$,
and $\mathbf{f}$ is a formal power series with coeff{}icients in $R$
satisfying the following properties. For every arithmetic point $\nu\in{}\xari(R)$ of weight $k\geq{}2$ and
character $\chi$,  the \emph{$\nu$-specialisation}
\[
                  f_{\nu}:=\sum_{n=1}^{\infty}\nu(\mathbf{a}_{n})\cdot{}q^{n}\in{}S_{k}(\Gamma_{0}(Np^{r}),\chi\omega^{2-k})
\]
is the $q$-expansion of an $N$-new $p$-ordinary  Hecke eigenform of level $Np^{r}$, weight $k$, and character 
$\chi\cdot{}\omega^{2-k}$.
Here  $r$ is the smallest positive integer such that $1+p^{r}\mathbf{Z}_{p}\subset{}\ker(\chi)$
and $\omega$ is the Teichm\"uller character.  
Moreover, there exists a distinguished  arithmetic point
$\psi=\nu_{f}\in{}\xari(R)$ of weight $2$ and trivial character such that
\[
                     f=f_{\psi}.
\]
With the notations of Section 1 of \cite{H-2}, let $h^{o}(N;\Z_{p})$
be the universal $p$-ordinary Hecke algebra of tame level $N$. 
Diamond operators give a morphism of $\Z_{p}$-algebras $[\cdot{}] : \iw\fre{}h^{o}(N;\Z_{p})$, making
$h^{o}(N;\Z_{p})$ a free, finitely generated $\iw$-module \cite[Theorem 3.1]{H-1}.
(We assume here that $[\cdot{}]$ is normalised as in Section 1.4 of \cite{N-P}.)
The ring $R$, denoted $\mathscr{I}(\mathscr{K})$ in \cite{H-2}, is the integral closure of 
$\iw$ in the primitive component $\mathscr{K}=\mathscr{K}_{f}$ of $h^{o}(N;\Z_{p})\otimes_{\iw}\mathrm{Frac}(\iw)$ to which $f$ \emph{belongs}
\cite[Corollary 1.3]{H-2}.

Let $\nu\in{}\xari(R)$. By \cite[Corollary 1.4]{H-2} the localisation of 
$R$ at the kernel of $\nu$  is a discrete valuation ring,
unramified over the localisation of $\hid$ at $\hid\cap{}\ker(\nu)$. In particular,  
fix a topological generator $\gamma_{0}\in{}\Gamma$,  let $\varpi:=\gamma_{0}-1\in{}\iw$ and write 
$\mathfrak{p}=\mathfrak{p}_{\psi}:=\ker(\psi)$.
Then
\begin{equation}\label{eq:uniforler}
                \mathfrak{p}R_{\mathfrak{p}}=\varpi\cdot{}R_{\mathfrak{p}},
\end{equation}
i.e. $\varpi$ is a prime element of $R_{\mathfrak{p}}$.

\subsection{Hida's $R$-adic representation}\label{hida rep}
Let $\T=\T_{\mathbf{f}}$ be the $p$-ordinary $R$-adic representation attached by Hida to $\mathbf{f}$
in \cite[Theorem 2.1]{H-2}. More precisely, let $J_{\infty}^{o}[p^{\infty}]$ be the `big' 
$p$-divisible group 
appearing in Section 8 of \cite{H-2}, which is a $h^{o}(N;\Z_{p})$-module of co-finite rank.  We define   
$\T:=\Hom{\Z_{p}}(J_{\infty}^{o}[p^{\infty}],\mu_{p^{\infty}})\otimes_{h^{o}(N;\Z_{p})}R$.
It is a rank-two $R$-module, equipped with a continuous $R$-linear action of $G_{\Q}$, which is unramified 
at every rational prime $l\nmid{}Np$.
According to Th\'eor\`eme 7 of \cite{M-T} our assumption on the
irreducibility of $A_{p}$ implies that  
$\T$  is a free $R$-module of rank two 
and that
\[
         \mathrm{Trace}\big(\mathrm{Frob}_{l}|\T\big)=\mathbf{a}_{l}; \ \ \ 
              \det\big(\mathrm{Frob}_{l}|\T\big)=l[l]
\]
for every $l\nmid{}Np$,  
where 
$\mathrm{Frob}_{l}$ is an arithmetic Frobenius at $l$
and $[\cdot{}] : \Gamma\subset{}\iw\fre{}R$ is the structural morphism
\footnote{Th\'eor\`eme 7 of \cite{M-T} proves these facts assuming that the residual Galois representation 
$\overline{\rho}_{\mathbf{f}}$
of $\T$ is absolutely irreducible. As pointed out to us by J. \neko{}, \emph{loc. cit.} also requires $\overline{\rho}_{\mathbf{f}}$ to be \emph{$p$-distinguished} (see \cite{EPW}).
As $\overline{\rho}_{\mathbf{f}}\cong{}A_{p}$ and $p\not=2$, this hypothesis is 
automatically satisfied in our case, by Tate's theory of $p$-adic uniformisation.}. 

\subsubsection{Ramification at $p$} Let $G_{p}:=G_{\mathbf{Q}_{p}}\hookrightarrow{}G_{\mathbf{Q}}$
be the decomposition group determined by our choice of  $i_{p} : \overline{\mathbf{Q}}\hookrightarrow{}\overline{\mathbf{Q}}_{p}$
and let $I_{p}:=I_{\mathbf{Q}_{p}}$ be its inertia subgroup.  
By \emph{loc. cit.} (see also \cite[Section 1.5]{N-P}) there exists an exact sequence 
of $R[G_{p}]$-modules \vspace{-2mm}
\begin{equation}\label{eq:filtrationT}
              0\lfre{}\T^{+}\lfre{i^{+}}\T\lfre{p^{-}}\T^{-}\lfre{}0,
\end{equation}
where $\T^{+}$ and $\T^{-}$ are free $R$-modules of rank $1$ and $\T^{-}$
is unramified. 
Moreover,  write 
$\widetilde{\mathbf{a}}_{p} : G_{p}\twoheadrightarrow{}G_{p}/I_{p}\fre{}R^{\ast}$
for the unramified character  sending the arithmetic Frobenius $\mathrm{Frob}_{p}
\in{}G_{p}/I_{p}$
to the $p$-th Hecke operator $\mathbf{a}_{p}$.
Then $G_{p}$ acts on $\T^{-}$ via $\widetilde{\mathbf{a}}_{p}$
and on $\T^{+}$ via  
$\widetilde{\mathbf{a}}_{p}^{-1}\chi_{\mathrm{cyc}}\left[\kappa_{\mathrm{cyc}}\right]$,
i.e.
\begin{equation}\label{eq:actionramT}
                        \T^{+}\cong{}
                        R\Big(\chi_{\mathrm{cyc}}{}\big[\kappa_{\mathrm{cyc}}\big]{}\widetilde{\mathbf{a}}_{p}^{-1}\Big); \ \ 
                        \T^{-}\cong{}R\big(\widetilde{\mathbf{a}}_{p}\big).
\end{equation}
As in the introduction,  
$\chi_{\mathrm{cyc}} : G_{\mathbf{Q}}\twoheadrightarrow{}\mathbf{Z}_{p}^{\ast}$ is the $p$-adic cyclotomic character,
and $\kappa_{\mathrm{cyc}} : G_{\mathbf{Q}}\twoheadrightarrow{}\Gamma$ is the composition  of $\chi_{\mathrm{cyc}}$ 
with the projection to principal units.

\subsubsection{Specialisations}\label{specialisations}
Let  $\nu\in{}\xari(R)$, let $K_{\nu}:=\mathrm{Frac}(\nu(R))$ and let 
$V_{\nu}$ be the contragredient of the $K_{\nu}$-adic Deligne representation of $G_{\Q}$ attached to 
the eigenform $f_{\nu}$.  
It follows from \cite[Theorem 1.4.3]{Ohta} that 
the representation $\T_{\nu}:=\T\otimes_{R,\nu}\nu(R)$ is canonically isomorphic to a Galois-stable 
$\nu(R)$-lattice in $V_{\nu}$; in particular 
there is a natural isomorphism \vspace{-1mm}
\begin{equation}\label{eq:ESdis}
                    \T_{\nu}\otimes_{\Z_{p}}\Q_{p}\cong{}V_{\nu}.
\end{equation}
We identify from now on $\T_{\nu}$ with a Galois-stable $\nu(R)$-lattice in $V_{\nu}$.

Considering the arithmetic point $\psi\in{}\xari(R)$ corresponding to $f$,
one has $\T_{\psi}\otimes_{\Z_{p}}\Q_{p}\cong{}V_{p}(A)$.
Indeed, the  irreducibility of $A_{p}$ implies that $\psi$ induces a canonical isomorphism  of 
$\mathbf{Z}_{p}[G_{\mathbf{Q}}]$-modules
\begin{equation}\label{eq:ES tate}
          \pi_{f} :  \T_{\psi}\cong{}T_{p}(A).
\end{equation}

Recall the Tate parametrisation $\Phi_{\mathrm{Tate}}$ introduced in $(\ref{eq:Tate iso})$.
As $q_{A}$ has positive valuation, $\Phi_{\mathrm{Tate}}$ induces 
on the $p$-adic Tate modules a short exact sequence of $\mathbf{Z}_{p}[G_{p}]$-modules \vspace{-2mm}
\begin{equation}\label{eq:tate exact}
                     0\lfre{}\mathbf{Z}_{p}(1)\lfre{i^{+}}T_{p}(A)\lfre{p^{-}}\mathbf{Z}_{p}\lfre{}0.
\end{equation}
We also write  $T_{p}(A)^{+}:=\mathbf{Z}_{p}(1)$ and $T_{p}(A)^{-}:=\mathbf{Z}_{p}$.
By $(\ref{eq:actionramT})$ there are  isomorphisms of $G_{p}$-modules
\begin{equation}\label{eq:ES tate pm}
               \pi_{f}^{+} : \T_{\psi}^{+}:=\T^{+}\otimes_{R,\psi}\mathbf{Z}_{p}\cong{}\mathbf{Z}_{p}(1);\ \ \ 
               \pi_{f}^{-} : \T_{\psi}^{-}:=\T^{-}\otimes_{R,\psi}\mathbf{Z}_{p}\cong{}\mathbf{Z}_{p}.
\end{equation}
We can, and will, normalise $\pi_{f}^{\pm}$ in such a way that they are  
compatible with $\pi_{f}$.

\subsection{$p$-adic $L$-functions}\label{padicl}
Let $G_{\infty}$
and  $\Ic$
be as in the introduction,
and define $\overline{R}:=R\llbracket{}G_{\infty}\rrbracket=R\widehat{\otimes}_{\mathbf{Z}_{p}}\Ic$. Under our assumptions
Section 3.4 of \cite{EPW}
(using ideas from  \cite{Kit} and  \cite{G-S}) attaches to $\mathbf{f}$ an element
\[
          L_p(\mathbf{f})\in{}\R,
\]
unique  up to multiplication by units in $R$,
which interpolates  the Mazur--Tate--Teitelbaum $p$-adic $L$-functions of the arithmetic specialisations of $\mathbf{f}$.
More precisely, given $\nu\in{}\xari(R)$,
let $\overline{R}_{\nu}:=\nu(R)\llbracket{}G_{\infty}\rrbracket$
and write again 
$\nu : \R\twoheadrightarrow{}\overline{R}_{\nu}$
for the morphism of $\Ic$-algebras induced by $\nu$.
Fix also a \emph{canonical Shimura period} $\Omega_{\nu}\in{}\mathbb{C}^{\ast}$ for 
$f_{\nu}$ (see \cite[Sec. 3.1]{EPW}).
Then, for every $\nu\in{}\xari(R)$, there exists a scalar $\lambda_{\nu}\in{}\nu(R)^{\ast}$ such that
\[
          \nu\lri{L_p(\mathbf{f})}
          =\lambda_{\nu}\cdot{}L_{p}(f_{\nu})\in{}\overline{R}_{\nu},
\]
where $L_{p}(f_{\nu})=L_{p,\Omega_{\nu}}(f_{\nu})$
is the Mazur--Tate--Teitelbaum $p$-adic $L$-function attached in \cite{M-T-T} to $f_{\nu}$,
normalised with respect to  $\Omega_{\nu}$ (see also \cite[Section 4]{G-S}).
It is characterised by the following interpolation property: let $k_{\nu}$
be the weight of $\nu$. Then 
for every finite order character 
$\chi : G_{\infty}\fre{}\overline{\mathbf{Q}}_{p}^{\ast}$
and every integer $0<s_{0}<k_{\nu}$
\begin{equation}\label{eq:intmtt}
      \chi\cdot{}\chi_{\mathrm{cyc}}^{s_{0}-1}\big(L_{p}(f_{\nu})\big)
      =\nu(\mathbf{a}_{p})^{-m} 
      \lri{1-\frac{\chi\omega^{1-s_0}(p)\cdot{}p^{s_0-1}}{\nu(\mathbf{a}_{p})}}L^{\mathrm{alg}}
      (f_{\nu},\chi\omega^{1-s_0},s_0),
\end{equation}
where $m$ is the $p$-adic valuation of the conductor of  $\chi$ and 
\[
        L^{\mathrm{alg}}(f_{\nu},\chi\omega^{1-s_{0}},s_0)
        :=\tau\big(\chi\omega^{1-s_{0}}\big)_{}{}{p^{m(s_0-1)}(s_0-1)!}
        \frac{L(f_{\nu},\chi^{-1}\omega^{s_{0}-1},s_0)}{(2\pi{}i)^{s_0-1}\Omega_{\nu}}\in{}\overline{\mathbf{Q}}.
\]
For a Dirichlet character $\mu$, $\tau(\mu)$ is the  Gauss sum  of $\mu$  and $L(f_{\nu},\mu,s)$
is the Hecke  $L$-function of $f_{\nu}$ twisted by $\mu$.

According to \cite[Sec. 3]{GV}, under our assumptions we can choose $\Omega_{\psi}=\Omega_{A}^{+}$
as the real N\'eron period of $A/\mathbf{Q}$,
so that $L_{p}(A/\mathbf{Q}):=L_{p}(f_{\psi})$ is the \emph{$p$-adic $L$-function of $A/\mathbf{Q}$}. 
Here we insist to make this choice and to normalise 
$L_p(\mathbf{f})$ by requiring  $\lambda_{\psi}=1$, i.e.
\begin{equation}\label{eq:hht}
     \psi\big(L_p(\mathbf{f})\big)=L_p(A/\mathbf{Q}).
\end{equation}
Then $L_p(\mathbf{f})$
is a well-defined element of $\overline{R}$ up to multiplication by units  $\alpha\in{}R^{\ast}$ such that  $\psi(\alpha)=1$.

\subsubsection{Exceptional zeros}\label{exczerosec} The \emph{$p$-adic multiplier}
\[
     E_{p}(\nu,\chi\cdot{}\chi_{\mathrm{cyc}}^{j}):=\lri{1-\frac{\chi\omega^{-j}(p)\cdot{}p^{j}}{\nu(\mathbf{a}_{p})}}
\]      
which appears 
in the interpolation formula $(\ref{eq:intmtt})$ is  responsible for the phenomenon of  \emph{exceptional zeros} 
mentioned in the introduction (cf. \cite{M-T-T}). 
Indeed $\psi(\mathbf{a}_{p})=a_{p}(A)=+1$ in our setting, and
$E_{p}(\psi,1)=0$.
In particular, let $I=I_{\mathrm{cyc}}$ be the augmentation ideal of $\Ic$
and let  $\overline{\mathfrak{p}}=(\mathfrak{p},I)$ be the ideal of $\overline{R}$ 
generated by $I$ and $\mathfrak{p}$. Then
\begin{equation}\label{eq:exzerolmk}
    L_{p}(\mathbf{f})\in{}\overline{\mathfrak{p}};\ \ L_{p}(A/\mathbf{Q})\in{}I.
\end{equation}

%
\subsubsection{The improved $p$-adic $L$-function}\label{improved integral} Let $\varepsilon : \overline{R}\twoheadrightarrow{}R$
be the augmentation map. 
By \cite[Remark 3.4.5]{EPW} (generalising a result of \cite{G-S})
there is a  factorisation
\begin{equation}\label{eq:improved integral}
             \varepsilon\big(L_{p}(\mathbf{f})\big)=\lri{1-\mathbf{a}_{p}^{-1}}\cdot{}L_{p}^{\ast}(\mathbf{f}),
\end{equation}
for an element $L_{p}^{\ast}(\mathbf{f})\in{}R$
called the \emph{improved $p$-adic $L$-function} of $\mathbf{f}$.

\subsection{The analytic Mellin transform}\label{mellinhida} 
As explained in \cite[Section 2.6]{G-S} (see also \cite[Section 1.4.7]{N-P}),
there exist a disc $U\subset{}\Z_{p}$ centred at $2$ and a unique morphism of $\iw$-modules 
\[
               \M=\M_{f} : R\longrightarrow{}\mathscr{A}_{U}
\]
such that $\M(r)|_{k=2}=\psi(r)$ for every $r\in{}R$. 
Here
$\mathscr{A}_{U}\subset{}\Q_{p}\llbracket{}k-2\rrbracket$
(see Section $\ref{intro 2}$)
is endowed with the structure of a $\hid$-algebra via the character $\Gamma\fre{}\mathscr{A}_{U}$
which sends  $\gamma\in{}\Gamma$
to the power series $\gamma^{k-2}:=\exp_{p}\big((k-2)\cdot{}\log_{p}(\gamma)\big)$. 
%
The morphism $\M$ is called the \emph{Mellin transform centred at $k=2$}.
For every $n\in{}\mathbf{N}$, 
set $a_{n}(k):=\M(\mathbf{a}_{n})$
and define 
\[
            f_{\infty}:=\sum_{n=1}^{\infty}a_n(k)\cdot{}q^n\in{}\mathscr{A}_{U}\llbracket{}q\rrbracket.
\]

Let $\mathscr{A}\subset{}\mathbf{Q}_{p}\llbracket{}k-2,s-1\rrbracket{}$ 
and $\mathscr{J}\subset{}\mathscr{A}$ be as in Section $\ref{intro 2}$. 
Then $\mathscr{A}$ has a structure of $\Ic$-algebra, induced by the character 
$G_{\infty}\fre{}\mathscr{A}$ mapping
$g\in{}G_{\infty}$ to 
$\chi_{\mathrm{cyc}}(g)^{s-1}:=\exp_{p}\big((s-1)\cdot{}\log_{p}\big(\chi_{\mathrm{cyc}}(g)\big)\big)$. 
Moreover there exists a unique morphism of $\Ic$-algebras 
$$\Mm=\Mm_{f} : \overline{R}\lfre{}\mathscr{A}$$
whose restriction to $R$ equals $\M$,  called the   \emph{Mellin transform centred at $(k,s)=(2,1)$}.
Define the \emph{Mazur--Kitagawa $p$-adic $L$-function of $f_{\infty}$}:
\begin{equation}\label{eq:definition LMK}
             L_{p}(f_{\infty},k,s):=\Mm\big(L_{p}(\mathbf{f})\big)\in{}\mathscr{J}
\end{equation}
as the Mellin transform of $L_{p}(\mathbf{f})\in{}\overline{R}$. More precisely, it is a well-defined element of $\mathscr{A}$
up to multiplication by a nowhere-vanishing  function $\alpha(k)\in\mathscr{A}_{U}$ such that $\alpha(2)=1$,
and belongs to $\mathscr{J}$
by equation $(\ref{eq:exzerolmk})$.
In the introduction we defined  $L_{p}(A/\mathbf{Q},s):=\chi_{\mathrm{cyc}}^{s-1}\big(L_{p}(A/\mathbf{Q})\big)=\Mm\big(L_{p}(A/\mathbf{Q})\big)$, so that 
equation
$(\ref{eq:hht})$ gives
\begin{equation}\label{eq:speMK}
                  L_{p}(f_{\infty},2,s)=L_{p}(A/\mathbf{Q},s).
\end{equation}
According to Theorem 5.15 of \cite{G-S} $L_{p}(f_{\infty},k,s)$ satisf{}ies the functional equation
\begin{equation}\label{eq:funeqfinal}
                  \Lambda_{p}(f_{\infty},k,s)=-\mathrm{sign}(A/\mathbf{Q})\cdot{}\Lambda_{p}(f_{\infty},k,k-s),
\end{equation}
where $\Lambda_{p}(f_{\infty},k,s):=\dia{N}^{s/2}\cdot{}L_{p}(f_{\infty},k,s)$, $\langle\cdot{}\rangle : \Z_{p}^{\ast}\twoheadrightarrow{}1+p\Z_{p}$
denotes the projection to principal units and $\mathrm{sign}(A/\Q)\in{}\{\pm1\}$ is the sign in the functional equation 
satisfied by the Hasse--Weil $L$-function of $A/\Q$. 
Note that the \emph{central critical line} $s=k/2$ is the `centre of symmetry' of the functional equation. 
In particular, when $\mathrm{sign}(A/\mathbf{Q})=+1$, $L_{p}(f_{\infty},k,k/2)$ 
vanishes identically.

Write $L_{p}^{\ast}(f_{\infty},k):=\M\big(L_{p}^{\ast}(\mathbf{f})\big)\in{}\mathscr{A}_{U}$. As $\M\circ{}\varepsilon=\Mm(\cdot)|_{s=1}$, equation $(\ref{eq:improved integral})$ gives a factorisation
in $\mathscr{A}_{U}$: 
\begin{equation}\label{eq:essendefimproved}
                  L_{p}(f_{\infty},k,1)=\lri{1-a_{p}(k)^{-1}}\cdot{}L_{p}^{\ast}(f_{\infty},k).
\end{equation}
The function $L_{p}^{\ast}(f_{\infty},k)$ is called  the \emph{improved $p$-adic $L$-function of $f_{\infty}$}.

\subsection{The Bertolini--Darmon exceptional zero formula}\label{BD formula}
The following result has been proved in \cite{B-D}, assuming a mild technical condition subsequently removed in 
\cite[Section 6]{Mok}. Denote by $L_{p}^{\mathrm{cc}}(f_{\infty},k)\in{}\mathscr{A}_{U}$
the restriction of $L_{p}(f_{\infty},k,s)$ to the central critical line $s=k/2$.

\begin{theo}\label{main Bertolini-Darmon} There exist a non-zero rational number $\ell\in{}\Q^{\ast}$ and a rational point $\mathbf{P}\in{}A(\Q)\otimes\Q$
such that 
\[
             \frac{d^{2}}{dk^{2}}L_{p}^{\mathrm{cc}}(f_{\infty},k)_{k=2}=\ell\cdot{}\log_{A}^{2}(\mathbf{P}).
\]
Moreover, $\mathbf{P}$ is non-zero if and only if $L(A/\Q,s)$ has a simple zero at $s=1$.
\end{theo}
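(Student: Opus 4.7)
The plan is to follow the original Bertolini--Darmon strategy: convert the restriction $L_{p}^{\mathrm{cc}}(f_{\infty},k)$ to the central critical line into an anticyclotomic theta element on a definite quaternion algebra, then use Cerednik--Drinfeld $p$-adic uniformisation of a Shimura curve to identify the second derivative of that theta element with the square of the formal-group logarithm of a Heegner point.

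First I would choose an imaginary quadratic field $K/\Q$ of discriminant coprime to $Np$ in which every prime dividing $N$ splits and $p$ is inert. The splitting condition ensures the classical Heegner hypothesis relative to $N$, while the inertness of $p$ twists the functional equation so that the base change of $\mathbf{f}$ to $K$ exhibits an anticyclotomic exceptional zero at the centre $s=k/2$. On the definite quaternion algebra $B$ ramified exactly at $\{p,\infty\}$, the $p$-adic Jacquet--Langlands correspondence, together with Hida's theory on $B^{\times}$, yields a $p$-ordinary $R$-adic Jacquet--Langlands lift $\mathbf{f}^{B}$ of $\mathbf{f}$. Evaluating $\mathbf{f}^{B}$ on CM cycles attached to the maximal order of $K$ produces a \emph{big theta element} $\Theta_{\infty}\in R$ whose specialisations, via a $p$-adic Waldspurger formula, interpolate the critical values $L(f_{k}/K,k/2)$; concretely, up to a nowhere-vanishing period factor one obtains
\[
      \Theta_{\infty}=L_{p}^{\mathrm{cc}}(f_{\infty},k)\cdot{}L_{p}\bigl(f_{\infty}\otimes\chi_{K},k,k/2\bigr),
\]
where $\chi_{K}$ is the quadratic character of $K/\Q$.

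Second, by construction the interpolation property forces $\Theta_{\infty}$ to carry the Euler-like factor $(a_{p}(k)-1)^{2}$, so it vanishes to order at least two at $k=2$. To compute $\tfrac{d^{2}}{dk^{2}}\Theta_{\infty}(k)_{k=2}$ I would invoke Cerednik--Drinfeld rigid analytic uniformisation of the Shimura curve $X$ obtained from $B$ by swapping the places $p$ and $\infty$: the image in $A(K)$ of a CM point of $X$ attached to $K$, under the modular parametrisation $X\to{}A$, is a Heegner point $P_{K}\in{}A(K)$; and a rigid analytic differentiation along the Hida family — using the Tate uniformisation of $X$ on the $p$-adic upper half-plane together with Greenberg--Stevens' derivative formula for $a_{p}(k)$ at $k=2$ — yields $\tfrac{d^{2}}{dk^{2}}\Theta_{\infty}(k)_{k=2}=c\cdot{}\log_{A}^{2}(P_{K})$ for some explicit $c\in{}\Q^{\ast}$. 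Taking $\mathbf{P}$ to be the suitable projection of $P_{K}$ into $A(\Q)\otimes\Q$ (using $\mathrm{sign}(A/\Q)=-1$ to identify the Heegner line with the minus eigenspace for $\mathrm{Gal}(K/\Q)$), and dividing by the non-zero value of $L_{p}(f_{\infty}\otimes\chi_{K},k,k/2)$ at $k=2$ (arranged by selecting $K$ via Waldspurger-style non-vanishing results for $L(f,\chi_{K},1)$), one extracts the formula for $\tfrac{d^{2}}{dk^{2}}L_{p}^{\mathrm{cc}}(f_{\infty},k)_{k=2}$.

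The main obstacle will be the derivative computation on the rigid analytic side: one must simultaneously track the $\mathscr{L}$-invariant $\mathscr{L}_{p}(A)=\log_{p}(q_{A})/\mathrm{ord}_{p}(q_{A})$ coming from Tate uniformisation of $X$ and the derivative of the $U_{p}$-eigenvalue $a_{p}(k)$ along the Hida family, and verify that the two combine, modulo the $\log_{A}^{2}(P_{K})$-contribution, into a non-zero rational multiple. The \emph{only if} direction of the non-triviality statement then follows from the classical Gross--Zagier formula applied to $P_{K}$ combined with the Waldspurger non-vanishing used to select $K$, and the \emph{if} direction from the fact that a simple zero forces $\mathrm{rank}_{\Z}A(\Q)=1$ so that the trace of any non-torsion Heegner point generates $A(\Q)\otimes\Q$.
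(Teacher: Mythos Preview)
The paper does not give its own proof of this statement: it is quoted as a result of Bertolini--Darmon \cite{B-D} (with a technical hypothesis later removed by Mok), and Remark~\ref{remark main BD} only records the ingredients of their construction. Your sketch is an attempt to reproduce that argument, and the overall architecture---Jacquet--Langlands to a definite quaternion algebra, big theta elements interpolating central values over $K$, factorisation into the product of $L_{p}^{\mathrm{cc}}(f_{\infty},k)$ and the $\chi_{K}$-twist, Cerednik--Drinfeld uniformisation producing a Heegner point on a Shimura curve---is indeed the Bertolini--Darmon strategy.

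There is, however, a genuine gap in your quaternionic setup. With $K$ chosen so that \emph{every} prime dividing $N$ splits and $p$ is inert, the definite algebra $B$ you want is ramified exactly at $\{p,\infty\}$; but then ``swapping $p$ and $\infty$'' does not produce a valid indefinite quaternion algebra (a single finite ramification place is impossible), so there is no Shimura curve $X$ to which Cerednik--Drinfeld can be applied. The actual Bertolini--Darmon choice, recorded in Remark~\ref{remark main BD}, is different: one factors $N=N^{+}N^{-}$ with $N^{-}$ squarefree having an \emph{odd} number of prime factors, takes $K$ so that the primes dividing $pN^{-}$ are inert and those dividing $N^{+}$ are split, works on the definite algebra of discriminant $N^{-}\infty$ (Eichler order of level $N^{+}p$), and passes to the indefinite algebra of discriminant $pN^{-}$, whose Shimura curve $X_{N^{+},pN^{-}}$ genuinely has $p$ in its discriminant and hence admits Cerednik--Drinfeld $p$-adic uniformisation. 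Your proposal collapses to the case $N^{-}=1$, which is exactly the case the construction excludes. A second, smaller point: the relation between the theta element and the two $p$-adic $L$-functions is quadratic (it is $\Theta_{\infty}\cdot\Theta_{\infty}^{\ast}$, or equivalently the square in the relevant sense, that factors as the product), not linear as you wrote; this matters for getting $\log_{A}^{2}(\mathbf{P})$ rather than $\log_{A}(\mathbf{P})$ in the end.
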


\begin{remark}\label{remark main BD}\emph{ Assume for simplicity that $\mathrm{sign}(A/\Q)=-1$ and that 
$N\not=1$ is not square-full   (see \cite{Mok} for the general case).
As explained in \cite{B-D}, the definitions of $\mathbf{P}$ and $\ell$ rest on the choice of an auxiliary 
imaginary quadratic field $K/\Q$ satisfying the following conditions. Let $D_{K}$
and  $\epsilon_{K} : \lri{\Z/D_{K}\Z}^{\ast}\fre{}\{\pm1\}$ denote the 
discriminant and  the quadratic character of $K$ respectively. 
\begin{itemize}
\item[$(\alpha)$] $(D_{K},Np)=1$ and there is a  factorisation $Np=pN^{+}N^{-}$, such that $pN^{-}$ is square-free and a prime divisor 
of $Np$ divides $pN^{-}$ if  and only if  it is inert in $K$.
\item[$(\beta)$] The special value $L(A/\Q,\epsilon_{K},1)$ is non-zero. 
\end{itemize}
Then $\mathbf{P}$
is defined as the trace to $\Q$
of a Heegner point  in $A(K)\otimes{}\Q$,  coming from a parametrisation of $A/\Q$ by the Shimura curve 
$X_{N^{+},pN^{-}}$ associated with  an Eichler order of level $N^{+}$ in the
indefinite quaternion algebra  of discriminant  $pN^{-}$.
The rational number $\ell$ is defined by the  relation
\[
              2\ell^{-1}=\eta_{f}\cdot{}\sqrt{D_{K}}\cdot{}\frac{L(A/\Q,\epsilon_{K},1)}{\Omega_{A}^{-}}\in{}\Q^{\ast}.
\] 
Here
$\Omega_{A}^{-}\in{}i\mathbf{R}^{\ast}$ is such that  $\Omega_{A}^{+}\cdot{}\Omega_{A}^{-}$
is  the  Petersson norm of $f$.
The constant $\eta_{f}:=\dia{\phi_{f},\phi_{f}}\in{}\Q^{\ast}$
is the Petersson norm of a (suitably normalised)  Jacquet--Langlands lift of $f$ to an eigenform $\phi_{f}$ on the  definite quaternion 
algebra 
of discriminant  $N^{-}\infty$ (cf. Sections  2.2 and 2.3 of \cite{B-D}).
Note that both $\mathbf{P}$ and $\ell$ depend on the choice of  $K/\Q$,
while the product $\ell\cdot{}\log_{A}^{2}(\mathbf{P})$ does not.}
\end{remark}

\subsection{Ochiai's  big dual exponential}\label{expbk}
We recall here  the definition of   Ochiai's  \emph{two-variable big dual exponential} 
for $\T$, constructed  in  \cite{Ochiaicol} using  previous work of Coleman--Perrin-Riou.

\subsubsection{Notations}
For every $n\in{}\mathbf{N}\cup\{\infty\}$, let $\Q_{p,n}$ 
be as in the introduction.
The Galois group of $\Q_{p,\infty}/\Q_{p}$ is naturally identified with $G_{\infty}=\mathrm{Gal}(\Q_{\infty}/\Q)$,
via the unique prime of $\Q_{\infty}$ dividing $p$.

Given $n\in{}\N$ and  a $p$-adic representation $V$ of $G_{p}=G_{\Q_{p}}$,
let $D_{\mathrm{dR},n}(V):=H^{0}(\Q_{p,n},V\otimes_{\Q_{p}}B_{\mathrm{dR}})$, where $B_{\mathrm{dR}}$
is Fontaine's field of periods.
It is equipped with a complete and separated decreasing filtration 
$\mathrm{Fil}^{\bullet}D_{\mathrm{dR},n}(V)$, arising from the 
filtration $\{\mathrm{Fil}^{n}B_{\mathrm{dR}}:=t^{n}B_{\mathrm{dR}}^{+}\}_{n\in{}\Z}$, where 
$B_{\mathrm{dR}}^{+}$ is the ring of integers of $B_{\mathrm{dR}}$ and $t:=\log(\zeta_{\infty})$,
for a fixed generator $\zeta_{\infty}\in{}\Z_{p}(1)$. Denote by
$\mathrm{tg}_{n}(V):=D_{\mathrm{dR},n}(V)/\mathrm{Fil}^{0}$
the tangent space of the $G_{\Q_{p,n}}$-representation $V$. If  $n=0$,
it will be omitted  from the notations (e.g. $D_{\mathrm{dR}}(V)=D_{\mathrm{dR},0}(V)$).
If $V$ is a de Rham representation of $G_{p}$, there is a  natural $\mathrm{Gal}(\Q_{p,n}/\Q_{p})$-equivariant 
isomorphism 
of filtered modules $D_{\mathrm{dR},n}(V)=D_{\mathrm{dR}}(V)\otimes_{\Q_{p}}\Q_{p,n}$.

Let $S$ be a complete, local Noetherian ring with finite residue field of characteristic $p$ and let 
$\mathbb{X}$ be a free $S$-module of finite rank, equipped with a continuous $S$-linear action of $G_{p}$.
Define  
\[
               H^{q}_{\mathrm{Iw}}(\Q_{p,\infty},\mathbb{X}):=\inlim_{n\in{}\mathbf{N}}H^{q}(\Q_{p,n},\mathbb{X}),
\]
where the limit  is taken with respect to the corestriction  maps in Galois cohomology.
Galois conjugation equips  $H^{q}_{\mathrm{Iw}}(\Q_{p,\infty},\mathbb{X})$ with the  structure of
a module over  the completed group algebra $\overline{S}:=S\llbracket{}G_{\infty}\rrbracket$.

For every $R$-module $\mathbb{M}$ and every $\nu\in{}\xari(R)$,
write $\mathbb{M}_{\nu}:=\mathbb{M}\otimes_{R,\nu}\nu(R)$.

\subsubsection{de Rham modules} Set $\check{\T}:=\mathrm{Hom}_{R}(\T,R)$ and 
$\check{\T}^{\pm}:=\mathrm{Hom}_{R}(\T^{\mp},R)$. 
Let  $\nu\in{}\xari(R)$.
Since $\T_{\nu}$ is a Galois-stable lattice in $V_{\nu}$ by $(\ref{eq:ESdis})$,
$\check{\T}_{\nu}$
is a Galois-stable lattice in 
the Deligne representation $\check{V}_{\nu}=\mathrm{Hom}_{K_{\nu}}(V_{\nu},K_{\nu})$ of $f_{\nu}$,
where $K_{\nu}:=\mathrm{Frac}(\nu(R))$.
Define $V_{\nu}^{\pm}:=\T_{\nu}^{\pm}\otimes_{\Z_{p}}\Q_{p}$ 
and  $\check{V}_{\nu}^{\pm}:=\check{\T}_{\nu}^{\pm}\otimes_{\Z_{p}}\Q_{p}$. 
According to $(\ref{eq:filtrationT})$, for $M_{\nu}\in{}\{V_{\nu},\check{V}_{\nu}\}$  there is  a short exact sequence of $K_{\nu}[G_{p}]$-modules 
\vspace{-2mm}
\[
         0\lfre{}M_{\nu}^{+}\lfre{i^{+}}M_{\nu}\lfre{p^{-}}M_{\nu}^{-}\lfre{}0.
\]

The representation $\check{V}_{\nu}$ is known to be de Rham, and then so is
$V_{\nu}$.
In addition, $\mathrm{Fil}^{0}D_{\mathrm{dR}}(\check{V}_{\nu})=D_{\mathrm{dR}}(\check{V}_{\nu})$ 
and $\mathrm{Fil}^{m}D_{\mathrm{dR}}(\check{V}_{\nu})$ is $1$-dimensional over $K_{\nu}$
(resp., zero) for every $1\leq{}m\leq{}k_{\nu}-1$ (resp., $m\geq{}k_{\nu}$),
where $k_{\nu}\geq{}2$ is the weight of $\nu$.
It follows  easily from $(\ref{eq:actionramT})$ that $p^{-}
: V_{\nu}\twoheadrightarrow{}V_{\nu}^{-}$ and $i^{+} : \check{V}_{\nu}^{+}\hookrightarrow{}\check{V}_{\nu}$
induce isomorphisms of $K_{\nu}$-modules
\begin{equation}\label{eq:tangent and fil}
                  \mathrm{Fil}^{0}D_{\mathrm{dR},n}(V_{\nu})\cong{}D_{\mathrm{dR},n}(V_{\nu}^{-});\ \ \ 
               D_{\mathrm{dR},n}(\check{V}_{\nu}^{+}(1))\cong{}\mathrm{tg}_{n}(\check{V}_{\nu}(1))
\end{equation}
for every $n\in{}\mathbf{N}$, which we  consider as equalities in what follows.

For every $n\in{}\N$ the duality $V_{\nu}\times{}\check{V}_{\nu}(1)\fre{}K_{\nu}(1)$ induces
a $K_{\nu}$-bilinear form
\[
         \dia{-,-}_{\mathrm{dR}}=\dia{-,-}_{\mathrm{dR},n} : \mathrm{Fil}^{0}D_{\mathrm{dR},n}(V_{\nu})\times{}\mathrm{tg}_{n}(\check{V}_{\nu}(1))
         \lfre{\cup\ }D_{\mathrm{dR},n}(K_{\nu}(1))=\Q_{p,n}\otimes_{\Q_{p}}K_{\nu}.
\] 
Under the isomorphisms $D_{\mathrm{dR},n}(M)=D_{\mathrm{dR}}(M)\otimes_{\Q_{p}}\Q_{p,n}$,
for $M=V_{\nu},\check{V}_{\nu}(1)$,
the pairing  $\dia{-,-}_{\mathrm{dR},n}$ is identified with the  $\Q_{p,n}$-base change of $\dia{-,-}_{\mathrm{dR},0}$.
Denote also by   $\dia{-,-}_{\mathrm{dR}} : \mathrm{Fil}^{0}D_{\mathrm{dR},n}(V_{\nu})\times{}\mathrm{tg}_{n}(\check{V}_{\nu}(1))
\fre{}K_{\nu}(\mu_{p^{n+1}})$  the bilinear form defined  by composing  $\dia{-,-}_{\mathrm{dR}}$
with  the multiplication $K_{\nu}\otimes_{\Q_{p}}\Q_{p,n}\fre{}K_{\nu}(\mu_{p^{n+1}})$.

\subsubsection{Variation of periods}\label{beka0}
Let $\Q_{p}^{\mathrm{un}}$ be the maximal unramified extension of $\Q_{p}$
and let $\widehat{\Z}_{p}^{\mathrm{un}}$ be the $p$-adic completion of its ring of integers. 
Following \cite[Section 3]{Ochiaicol}, define the $R$-module 
\[
        \mathcal{D}:=H^{0}\big(\Q_{p},\widehat{\Z}_p^{\mathrm{un}}\widehat{\otimes}_{\Z_p}\check{\T}^{+}\big).
\]
By $(\ref{eq:filtrationT})$ and $(\ref{eq:actionramT})$, the $G_{p}$-module $\check{\T}^{+}$ is unramified
and free of rank one as an $R$-module. 
Then $\mathcal{D}$ is also a free $R$-module of rank one, by Lemma $3.3$ of  \cite{Ochiaicol}.
As $H^{0}(\Q_{p}^{\mathrm{un}},B_{\mathrm{dR}})=\widehat{\Z}_{p}^{\mathrm{un}}\otimes_{\Z_{p}}\Q_{p}$,
this easily implies (cf. \emph{loc. cit.}) that for every  $\nu\in{}\xari(R)$
there is  a natural isomorphism of $K_{\nu}$-modules 
$\mathcal{D}_{\nu}\otimes_{\Z_{p}}\Q_{p}\cong{}D_{\mathrm{dR}}(\check{V}_{\nu}^{+})$.
This induces  a natural 
\emph{$\nu$-specialisation map}
\[
                  \mathcal{D}\lfre{}D_{\mathrm{dR}}(\check{V}_{\nu}^{+}).
\]
For every $X\in{}\mathcal{D}$, denote by $X_{\nu}$ the $\nu$-specialisation of $X$.

Fix  a  generator $\mho$ of the $R$-module $\mathcal{D}$, which also fixes a $K_{\nu}$-basis 
\[
             \mho_{\nu}(1):=\mho_{\nu}\otimes{}\zeta_{\mathrm{dR}}\in{}
\mathrm{tg}(\check{V}_{\nu}(1)).
\]
Here $\zeta_{\mathrm{dR}}:=\zeta_{\infty}\otimes{}\log(\zeta_{\infty})^{-1}\in{}D_{\mathrm{dR}}(\Q_{p}(1))$ is the canonical $\Q_{p}$-basis
associated to a generator $\zeta_{\infty}\in{}\Z_{p}(1)$ and $\cdot\otimes{}\zeta_{\mathrm{dR}}$
is the natural isomorphism $D_{\mathrm{dR}}(\check{V}^{+}_{\nu})\cong{}
D_{\mathrm{dR}}(\check{V}^{+}_{\nu})\otimes_{\Q_{p}}D_{\mathrm{dR}}(\Q_{p}(1))=D_{\mathrm{dR}}(\check{V}^{+}_{\nu}(1))$.


By $(\ref{eq:ES tate})$ and $(\ref{eq:ES tate pm})$ one has $\T_{\psi}\cong{}T_{p}(A)$
and $\T_{\psi}^{-}\cong{}\Z_{p}$ respectively. 
Then 
$\check{V}_{\psi}(1)$ and $\check{V}^{+}_{\psi}(1)$ are identified 
with $\check{V}_{p}(A)(1)$ and $\Q_{p}(1)$ respectively,
where $\check{V}_{p}(A):=\mathrm{Hom}_{\Q_{p}}(V_{p}(A),\Q_{p})$. 
In particular $\zeta_{\mathrm{dR}}$ can be identified with an element of 
$\mathrm{tg}\big(\check{V}_{p}(A)(1)\big)$ (cf. equation $(\ref{eq:tangent and fil})$).
After possibly multiplying  $\mho$ by a unit in $R$, we can assume 
\begin{equation}\label{eq:norm mho}
              \mho_{\psi}(1)=\zeta_{\mathrm{dR}}\in{}\mathrm{tg}\big(\check{V}_{p}(A)(1)\big).
\end{equation}

\subsubsection{Ochiai's  two-variable big dual exponential}\label{ochiaisec} 

For every  $\nu\in{}\xari(R)$
and every  finite order character $\chi : G_{\infty}\fre{}\overline{\Q}_{p}^{\ast}$
write $\nu\times{}\chi : \overline{R}\fre{}\overline{\Q}_{p}$ for the unique morphism 
of $\Z_{p}$-algebras
 whose restriction to $R$ (resp., $G_{\infty}$) equals $\nu$ (resp., $\chi$).
Let $\T^{?}$ denote either $\T$ or $\T^{\pm}$.
For every  $\mathfrak{Z}_{n}\in{}H^{q}(\Q_{p,n},\T^{?})$ 
let $\mathfrak{Z}_{n,\nu}\in{}H^{q}(\Q_{p,n},V_{\nu}^{?})$  be the image of 
$\mathfrak{Z}_{n}$ under the morphism induced in cohomology by
$\T^{?}\twoheadrightarrow{}\T^{?}_{\nu}\subset{}V_{\nu}^{?}$.
Finally, for every $n\in{}\N$, write
\[
            \exp^{\ast}=\exp^{\ast}_{V_{\nu}^{-}} : H^{1}(\Q_{p,n},V_{\nu}^{-})\lfre{}
            D_{\mathrm{dR},n}(V_{\nu}^{-})\cong{}
            \mathrm{Fil}^{0}D_{\mathrm{dR},n}(V_{\nu})
\]
for the Bloch--Kato dual exponential map defined in \cite[Chapter II]{Katiw}.

The following proposition is proved in Section 5 of \cite{Ochiaicol}
(see in particular Proposition 5.1) building on previous work of Coleman \cite{Col} and Perrin-Riou \cite{PRlog}.

\begin{proposition}\label{modifiedochiai}
There exists a unique morphism of $\overline{R}$-modules
\[
                 \mathcal{L}_{\T}:=\mathcal{L}_{\T,\mho} : H^{1}_{\mathrm{Iw}}(\Q_{p,\infty},\mathbb{T}^{-})\longrightarrow{}\overline{\mathfrak{p}}\subset\overline{R}
\]
such that:
for every $\mathfrak{Z}=(\mathfrak{Z}_n)\in{}H^1_\mathrm{Iw}(\Q_{p,\infty},\mathbb{T}^{-})$,
every weight-two arithmetic point $\nu\in{}\xari(R)$ and every  character  
$\chi : \mathrm{Gal}(\Q_{p,n}/\Q_{p})\fre{}\overline{}\overline{\Q}_p^\ast$ of 
conductor  $p^{m}\leq{}p^{n+1}$
\begin{align*}
      \nu\times{}\chi\big(\mathcal{L}_{\T}(\mathfrak{Z})\big)=
      \mathcal{E}(\nu,\chi)\sum_{\sigma\in{}\mathrm{Gal}(\Q_{p,n}/\Q_{p})}\chi(\sigma)^{-1}\cdot{}
      \big\langle\exp^{\ast}    
       \big(\mathfrak{Z}_{n,\nu}^{\sigma}\big),\mho_{\nu}(1)\big\rangle_{\mathrm{dR}},
\end{align*}
where 
\[
   \mathcal{E}(\nu,\chi):=\tau(\chi)\nu(\mathbf{a}_{p})^{-m}\lri{1-\frac{\chi(p)\nu(\mathbf{a}_{p})}{p}}^{-1}   
\lri{1-\frac{\chi(p)}{\nu(\mathbf{a}_{p})}}.
\]
\end{proposition}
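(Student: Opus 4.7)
The strategy is to establish uniqueness by a Zariski density argument, then construct $\mathcal{L}_{\mathbb{T}}$ by $R$-adically deforming the Coleman--Perrin-Riou one-variable dual exponentials, using the rank-one unramified structure of $\mathbb{T}^{-}$ and the canonical generator $\mho$ of $\mathcal{D}$ as glue. For uniqueness, observe that $\overline{R}$ is a finite flat extension of $\iw\widehat{\otimes}_{\Z_p}\Ic$, and the set of $\Z_p$-algebra maps $\nu\times\chi:\overline{R}\to\overline{\Q}_p$ with $\nu\in\xari(R)$ of weight two and $\chi$ a finite-order character of $G_\infty$ has trivial joint kernel; this follows from two standard density facts (weight-two arithmetic characters of $\Gamma$ are Zariski-dense in $\mathrm{Spec}(\iw)$; finite-order characters of $G_\infty$ are dense in $\mathrm{Spec}(\Ic)$), combined with the finiteness of $R$ over $\iw$. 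Hence any two $\overline{R}$-linear maps with the prescribed interpolation must coincide.

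For existence, I would first fix a weight-two point $\nu\in\xari(R)$. Since $V_\nu^{-}$ is one-dimensional unramified, hence crystalline, with $D_{\mathrm{cris}}(V_\nu^{-})$ a line generated by $\mho_\nu$, Coleman's theory (equivalently, Perrin-Riou's big dual exponential for the crystalline representation $V_\nu^{-}$) produces a one-variable dual exponential $\mathcal{L}_{\nu,\mho_\nu}:H^{1}_{\mathrm{Iw}}(\Q_{p,\infty},V_\nu^{-})\to\overline{R}_\nu\otimes_{\Z_p}\Q_p$ whose value at a finite-order character $\chi$ of conductor $p^{m}$ recovers $\sum_\sigma\chi(\sigma)^{-1}\langle\exp^{*}(\mathfrak{Z}_{n,\nu}^{\sigma}),\mho_\nu(1)\rangle_{\mathrm{dR}}$, modified by the Euler correction $\mathcal{E}(\nu,\chi)$. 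This correction encodes the difference between the Bloch--Kato $\exp^{*}$ and Coleman's operator $(1-\varphi)^{-1}$ acting on $\mathcal{D}_\nu$, together with a Gauss sum arising from the comparison of local and global pairings at ramified characters. To upgrade to two variables, one exploits that $\mathbb{T}^{-}=R(\widetilde{\mathbf{a}}_p)$ is rank-one unramified and $\mathcal{D}$ is free of rank one over $R$ with generator $\mho$: the Frobenius $\varphi$ acts $R$-linearly on $\mathcal{D}$ via $\widetilde{\mathbf{a}}_p$, so the Coleman operator $(1-\varphi)^{-1}$ deforms canonically over $R$, yielding an $\overline{R}$-linear $\mathcal{L}_{\mathbb{T},\mho}$ whose specialisation at each $\nu$ recovers $\mathcal{L}_{\nu,\mho_\nu}$. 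Compatibility with $\nu$-specialisation and the uniqueness just established then force the stated interpolation formula at every weight-two $\nu$.

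Finally, the image lies in $\overline{\mathfrak{p}}$ because evaluating at $\psi\times 1$ yields $\mathcal{E}(\psi,1)\cdot\langle\exp^{*}(\mathfrak{Z}_{0,\psi}),\mho_\psi(1)\rangle_{\mathrm{dR}}$, and the Euler factor $\mathcal{E}(\psi,1)=(1-1/p)^{-1}(1-1/a_p(A))=0$ since $a_p(A)=\psi(\mathbf{a}_p)=+1$ in the split multiplicative setting---this is precisely the exceptional zero phenomenon recorded in $(\ref{eq:exzerolmk})$. The main obstacle in executing this plan is the $R$-adic integrality step: the one-variable Coleman--Perrin-Riou maps a priori take values in a distribution algebra containing denominators, and producing an element of $\overline{R}$ (rather than of $\overline{R}\otimes\Q_p$ or a larger ring) requires controlling these denominators uniformly across the Hida family. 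This is where the rank-one unramified structure of $\mathbb{T}^{-}$ and the integral trivialisation provided by $\mho\in\mathcal{D}$ become essential, permitting Coleman's $\Z_p$-integral power series construction to deform $R$-integrally; this is the technical content of Sections 3--5 of \cite{Ochiaicol}.
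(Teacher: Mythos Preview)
Your proposal is correct and follows essentially the same approach the paper invokes: the paper does not prove this proposition in-house but refers to Section~5 of \cite{Ochiaicol} (building on Coleman \cite{Col} and Perrin-Riou \cite{PRlog}), and your sketch accurately outlines that construction---uniqueness by Zariski density of the $(\nu,\chi)$, existence by $R$-adically deforming the one-variable Coleman--Perrin-Riou map via the rank-one unramified structure of $\mathbb{T}^{-}$ and the integral generator $\mho\in\mathcal{D}$, and the landing in $\overline{\mathfrak{p}}$ from the vanishing of $\mathcal{E}(\psi,1)$ at the exceptional point.
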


With a slight abuse of notation, write again 
$$\mathcal{L}_{\T} : H^{1}_{\mathrm{Iw}}(\Q_{p,\infty},\T)\lfre{}\overline{\mathfrak{p}}$$
for the composition of $\mathcal{L}_{\T}$ with the morphism $H^{1}_{\mathrm{Iw}}(\Q_{p,\infty},\T)\fre{}H^{1}_{\mathrm{Iw}}(\Q_{p,\infty},\T^{-})$
induced by $p^{-} : \T\twoheadrightarrow{}\T^{-}$.

\subsection{Beilinson--Kato elements and Kato's reciprocity law}\label{bekaochiai}
We now state a general version of Kato's reciprocity law, following Section $6$ of \cite{Ochiai} (see in particular Corollary 6.17).

Denote by $\overline{\Q}(Np)/\Q$ the maximal algebraic extension of $\Q$ which is unramified at every finite prime 
$l\nmid{}Np$, and 
set $\gaun_{n}:=\mathrm{Gal}\big(\overline{\Q}(Np)/\Q_{n}\big)$.
Let $S$ be a local complete Noetherian ring with finite residue field of characteristic $p$ 
and let $\mathbb{X}$ be a free $S$-module of finite rank, equipped with a continuous $S$-linear action of $\gaun_{0}$.
Define 
\[
   H^{q}_{\mathrm{Iw}}(\Q_{\infty},\mathbb{X}):=\inlim_{n\in{}\mathbf{N}}H^{q}(\gaun_{n},\mathbb{X}),\vspace{-1mm}
\]
where 
the limit is taken with respect to the corestriction maps.
According to \cite[Corollary B.3.6]{Rub}, if $q=1$ and $S=\Z_{p}$,
the $\Ic$-module $H^{1}_{\mathrm{Iw}}(\Q_{\infty},\mathbb{X})$ 
is isomorphic to the inverse limit of the cohomology groups 
$H^{1}(\Q_{n},\mathbb{X})$. In particular the definition of $H^{1}_{\mathrm{Iw}}(\Q_{\infty},T_{p}(A))$
given here agrees with the one given in the introduction.

\begin{theo}\label{maincohpad} There exists $\bki_{\infty}=\big(\bki_{n}\big)_{n\in\N}\in{}H^{1}_{\mathrm{Iw}}(\Q_{\infty},\defo)$
such that
$$\mathcal{L}_{\T}\Big(\mathrm{res}_{p}\big(\bki_{\infty}\big)\Big)=L_{p}(\mathbf{f}).$$
\end{theo}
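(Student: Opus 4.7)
The plan is to deduce this from Ochiai's big Euler system construction together with Kato's reciprocity law at the individual arithmetic specialisations of $\mathbf{f}$, plus a Zariski density argument.

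First, I would invoke Ochiai's construction (Section $6$ of \cite{Ochiai}) of a universal Euler system over the Hida family. For each $n \geq 0$, Ochiai produces a class $\bki_{n} \in H^{1}(\gaun_{n}, \T)$ whose image under the specialisation map at any weight-two arithmetic point $\nu \in \xari(R)$ of trivial character coincides, up to a precisely determined period comparison constant, with Kato's Beilinson--Kato class $\zeta^{\mathrm{BK}}_{n}(V_{\nu})$ attached to $f_{\nu}$. These classes are compatible under corestriction in $n$, and assemble into an element $\bki_{\infty} \in H^{1}_{\mathrm{Iw}}(\Q_{\infty}, \T)$. The construction relies crucially on Hida's control theorem and on the freeness of $\T$ over $R$, both of which hold under our residual irreducibility hypothesis on $A_{p}$.

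Next, I would verify the identity $\mathcal{L}_{\T}(\mathrm{res}_{p}(\bki_{\infty})) = L_{p}(\mathbf{f})$ by evaluating both sides at a Zariski dense set of points. Fix an arbitrary weight-two arithmetic point $\nu \in \xari(R)$ and a finite order character $\chi : \mathrm{Gal}(\Q_{p,n}/\Q_{p}) \to \overline{\Q}_{p}^{\ast}$ of conductor $p^{m} \leq p^{n+1}$. Proposition \ref{modifiedochiai}, applied to $\bki_{\infty}$ and the chosen $\mho$, gives
\[
\nu \times \chi\big(\mathcal{L}_{\T}(\mathrm{res}_{p}(\bki_{\infty}))\big) = \mathcal{E}(\nu, \chi) \sum_{\sigma \in \mathrm{Gal}(\Q_{p,n}/\Q_{p})} \chi(\sigma)^{-1} \cdot \big\langle \exp^{\ast}(\bki_{n,\nu}^{\sigma}), \mho_{\nu}(1)\big\rangle_{\mathrm{dR}}.
\]
By Ochiai's specialisation property, $\bki_{n,\nu}$ agrees with $\lambda_{\nu}\cdot\zeta^{\mathrm{BK}}_{n}(V_{\nu})$ for the same constant $\lambda_{\nu}$ appearing in $\nu(L_{p}(\mathbf{f})) = \lambda_{\nu} \cdot L_{p}(f_{\nu})$; Kato's reciprocity law for the individual eigenform $f_{\nu}$ then identifies the right-hand side with the cyclotomic specialisation at $\chi$ of $\lambda_{\nu} \cdot L_{p}(f_{\nu})$. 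Comparing with the interpolation formula $(\ref{eq:intmtt})$, this is exactly $\nu \times \chi(L_{p}(\mathbf{f}))$.

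Since the pairs $(\nu, \chi)$ with $\nu$ of weight two and $\chi$ of finite order form a Zariski dense subset of $\mathrm{Spec}(\overline{R})$, and $\overline{R}$ is a reduced finite flat $\Ic$-algebra (because $R$ is a normal domain, finite flat over $\iw$), the equality at every such pair forces the desired identity in $\overline{R}$. The main obstacle is matching the normalisations: the canonical Shimura periods $\Omega_{\nu}$ defining $L_{p}(f_{\nu})$, the de Rham basis $\mho_{\nu}(1)$ entering $\mathcal{L}_{\T}$, and the lattice identifications $\T_{\nu} \otimes \Q_{p} \cong V_{\nu}$ must be chosen compatibly so that the comparison constants appearing on the two sides genuinely coincide. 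The normalisations $(\ref{eq:norm mho})$ and $(\ref{eq:hht})$ pin both sides down at $\nu = \psi$; the propagation of this agreement to all of $\xari(R)$ is precisely the period-patching argument of \cite{EPW} that underlies the construction of $L_{p}(\mathbf{f})$.
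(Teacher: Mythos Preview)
Your proposal is correct and follows essentially the same approach as the paper. The paper does not give a self-contained proof of this theorem at all: it states the result and then, in the remark immediately following, attributes it to Kato's construction of Beilinson--Kato classes for each $f_{\nu}$ together with Ochiai's interpolation (Theorem 6.11 and Corollary 6.17 of \cite{Ochiai}); your sketch simply unpacks that reference in more detail, supplying the Zariski density argument and the normalisation discussion that the paper leaves implicit.
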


\begin{remark}\emph{The preceding theorem  comes principally  from the work of Kato \cite{kateul}.
For every arithmetic point  $\nu\in{}\xari(R)$, \cite{kateul} attaches to $f_{\nu}$ a cyclotomic Euler system   
for $\T_{\nu}$, using \emph{Beilinson--Kato elements} in the $K_{2}$ of modular curves. In particular this gives a class
$\zeta^{\mathrm{BK}}_{\infty,\nu}\in{}H^{1}_{\mathrm{Iw}}(\Q_{\infty},\T_{\nu})$,
related to the $p$-adic $L$-function $L_{p}(f_{\nu})$ via the Perrin-Riou big dual exponential 
(see in particular Theorem 16.6 of  \cite{kateul}). According to Theorem 6.11 of  \cite{Ochiai}, 
the  classes $\{\zeta^{\mathrm{BK}}_{\infty,\nu}\}_{\nu}$ can be interpolated by a  \emph{two-variable Beilinson--Kato class}
$\bki_{\infty}\in{}H^{1}_{\mathrm{Iw}}(\Q_{\infty},\T)$, satisfying the conclusions of the theorem.}
\end{remark}

\section{The derivative of  Ochiai's big dual exponential}\label{derbigdualsec}
Consider the morphism of $\overline{R}$-modules
\[
          \mathcal{L}_{\T}(\cdot{},k,s):=\Mm\circ{}\mathcal{L}_{\T} : H^{1}_{\mathrm{Iw}}(\Q_{p,\infty},\T^{-})\lfre{}\mathscr{J}
          \subset{}\mathscr{A},
\]
defined as the composition of Ochiai's big dual exponential $\mathcal{L}_{\T}$ with the Mellin transform $\Mm$;
note that
$\mathcal{L}_{\T}(\cdot{},k,s)$ takes values in $\mathscr{J}\subset{}\mathscr{A}$ since
$\Mm$ maps by construction the ideal $\overline{\mathfrak{p}}$  into $\mathscr{J}$.
With a slight abuse of notation, denote again by
$\mathcal{L}_{\T}(\cdot{},k,s) : H^{1}_{\mathrm{Iw}}(\Q_{p,\infty},\T)\lfre{}\mathscr{J}$
the composition of $\mathcal{L}_{\T}(\cdot{},k,s)$ with the morphism induced by the projection $p^{-} : \T\twoheadrightarrow{}\T^{-}$.
The aim of this section is to prove 
Theorem $\ref{mainderalg}$ below, which gives  a simple expression for the 
derivative of $\mathcal{L}_{\T}(\cdot{},k,s)$. 


Denote by $\mathrm{rec}_{p} : \Q_{p}^{\ast}\fre{}G_{p}^{\mathrm{ab}}:=G_{\Q_{p}}^{\mathrm{ab}}$
the local reciprocity map,
normalised so that $\mathrm{rec}_{p}(p^{-1})$ is an arithmetic Frobenius. It induces an isomorphism 
$\mathrm{rec}_{p} : \Q_{p}^{\ast}\widehat{\otimes}\Q_{p}\cong{}G_{p}^{\mathrm{ab}}\widehat{\otimes}\Q_{p}$,
where $G\widehat{\otimes}\Q_{p}:=\big(\inlim{}_{n\in{}\N}{}G/p^{n}G\big)\otimes_{\Z_{p}}\Q_{p}$
for every abelian group $G$.
This yields an isomorphism of $\Q_{p}$-vector spaces
$$H^{1}(\Q_{p},\Q_{p})=\mathrm{Hom}_{\mathrm{cont}}(G_{p}^{\mathrm{ab}}\widehat{\otimes}\Q_{p},\Q_{p})
\cong\mathrm{Hom}_{\mathrm{cont}}(\Q_{p}^{\ast}\widehat{\otimes}\Q_{p},\Q_{p})
=\mathrm{Hom}_{\mathrm{cont}}(\Q_{p}^{\ast},\Q_{p}),$$
which we consider as an equality.
For
 every $\mathfrak{Z}=(\mathfrak{Z}_{n})\in{}H^{1}_{\mathrm{Iw}}(\Q_{p,\infty},\T^{-})$,
the class $\mathfrak{Z}_{0,\psi}\in{}H^{1}(\Q_{p},\Q_{p})$ is then identified with a continuous morphism on 
$\Q_{p}^{\ast}$ (see Section $\ref{ochiaisec}$ for the notations).
Let   $$\exp^{\ast}_{A} : H^{1}(\Q_{p},V_{p}(A))
\fre{}\mathrm{Fil}^{0}D_{\mathrm{dR}}(V_{p}(A))\cong\Q_{p}$$
be the Bloch--Kato dual exponential map (cf.   $(\ref{eq:tangent and fil})$).  
Finally, set  $e(1):=(1+p)\widehat{\otimes}\log_{p}(1+p)^{-1}\in{}\Z_{p}^{\ast}\widehat{\otimes}\Q_{p}$.

\begin{theo}\label{mainderalg} $1.$ Let $\mathfrak{Z}=(\mathfrak{Z}_{n})\in{}
H^{1}_{\mathrm{Iw}}(\Q_{p,\infty},\mathbb{T}^{-})$ and let
$\mathfrak{z}:=\mathfrak{Z}_{0,\psi}\in{}\mathrm{Hom}_{\mathrm{cont}}(\Q_{p}^{\ast},\Q_{p})$. Then
\[
\lri{1-\frac{1}{p}}{}\mathcal{L}_{\T}(\mathfrak{Z},k,s)\equiv{}\mathfrak{z}\big(p^{-1}\big)\cdot{}(s-1)-
\frac{1}{2}\mathscr{L}_p(A)\cdot{}\mathfrak{z}\big(e(1)\big)\cdot{}(k-2)\ \ \big(\mathrm{mod}\ \mathscr{J}^{2}\big).
\]\par

$2.$ Let $\mathfrak{Z}=(\mathfrak{Z}_{n})\in{}H^{1}_{\mathrm{Iw}}(\Q_{p,\infty},\mathbb{T})$
and let $\mathfrak{z}:=\mathfrak{Z}_{0,\psi}\in{}H^{1}(\Q_p,V_p(A))$.
Then
\[
     \lri{1-\frac{1}{p}}{}\mathcal{L}_{\T}(\mathfrak{Z},k,s)\equiv{}
     \mathscr{L}_p(A)\cdot{}\exp_{A}^{\ast}(\mathfrak{z})\cdot{}(s-k/2)\ \ 
     \big(\mathrm{mod}\ \mathscr{J}^{2}\big).
\]
\end{theo}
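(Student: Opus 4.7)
The proof of Theorem~\ref{mainderalg} reduces Part~2 to Part~1 via Tate's $p$-adic uniformisation, and establishes Part~1 by computing separately the two partial derivatives of $\mathcal{L}_\T(\mathfrak{Z},k,s)$ at $(k,s)=(2,1)$. Since $\mathcal{L}_\T(\mathfrak{Z})\in\overline{\mathfrak{p}}=(\mathfrak{p},I)\overline{R}$ by Proposition~\ref{modifiedochiai}, its Mellin transform lies in $\mathscr{J}$, and its class modulo $\mathscr{J}^2$ is determined by the $(k-2)$-coefficient (read off by setting $s=1$, i.e.\ applying the augmentation $\varepsilon:\overline{R}\to R$) and the $(s-1)$-coefficient (read off by setting $k=2$, i.e.\ specialising at $\nu=\psi$).

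For the $(k-2)$-coefficient, taking $\chi=1$ and $n=0$ in the interpolation formula gives, for every weight-two arithmetic point $\nu\in\xari(R)$,
\[
\nu(\varepsilon(\mathcal{L}_\T(\mathfrak{Z})))=\lri{1-\tfrac{\nu(\mathbf{a}_p)}{p}}^{-1}\lri{1-\tfrac{1}{\nu(\mathbf{a}_p)}}\langle\exp^\ast(\mathfrak{Z}_{0,\nu}),\mho_\nu(1)\rangle_{\mathrm{dR}}.
\]
Since weight-two arithmetic points are Zariski-dense in $\mathrm{Spec}(R)$, this determines $\varepsilon(\mathcal{L}_\T(\mathfrak{Z}))\in R$ uniquely. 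After applying $\M$, the key input is the Greenberg--Stevens derivative formula \cite{G-S} (see also \cite{M-man}): setting $\alpha(k):=\M(\mathbf{a}_p)$, one has $\alpha(2)=1$ and $\alpha'(2)=-\mathscr{L}_p(A)/2$. Thus $(1-1/\alpha(k))$ vanishes at $k=2$ to first order with derivative $-\mathscr{L}_p(A)/2$, while $(1-\alpha(k)/p)^{-1}$ evaluates to $(1-1/p)^{-1}$ at $k=2$. Combined with the identification $\langle\exp^\ast(\mathfrak{z}),\mho_\psi(1)\rangle_{\mathrm{dR}}=\mathfrak{z}(e(1))$ (a consequence of the normalisation $\mho_\psi(1)=\zeta_{\mathrm{dR}}$ and the explicit form of the Bloch--Kato dual exponential on $H^1(\Q_p,\Q_p)=\mathrm{Hom}_{\mathrm{cont}}(\Q_p^\ast,\Q_p)$), this produces the $(k-2)$-coefficient.

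For the $(s-1)$-coefficient, specialising at $\nu=\psi$ yields $\psi(\mathcal{L}_\T(\mathfrak{Z}))\in I\subset\Ic$, whose image in $I/I^2\cong\Q_p$ controls the $(s-1)$-coefficient via $\partial_s|_{s=1}\chi_{\mathrm{cyc}}^{s-1}=\log_p\circ\chi_{\mathrm{cyc}}$. Since $\psi(\mathbf{a}_p)=1$, the interpolation at $(\psi,\chi)$ for a nontrivial finite character $\chi$ of conductor $p^m\leq p^{n+1}$ simplifies to
\[
\chi(\psi(\mathcal{L}_\T(\mathfrak{Z})))=\tau(\chi)\sum_{\sigma}\chi(\sigma)^{-1}\langle\exp^\ast(\mathfrak{Z}_{n,\psi}^{\sigma}),\mho_\psi(1)\rangle_{\mathrm{dR}}.
\]
Extracting the first moment uses the explicit description of the Coleman--Perrin-Riou map for the unramified rank-one representation $\T^-_\psi=\Z_p$ underlying Ochiai's construction: the trace-compatibility of the system $(\mathfrak{Z}_{n,\psi})_n$ and local class field theory reduce the pairing of the measure $\psi(\mathcal{L}_\T(\mathfrak{Z}))$ with $\log_p\chi_{\mathrm{cyc}}$ to the evaluation of $\mathfrak{z}=\mathfrak{Z}_{0,\psi}\in\mathrm{Hom}_{\mathrm{cont}}(\Q_p^\ast,\Q_p)$ at the arithmetic Frobenius $\mathrm{rec}_p(p^{-1})$, namely $\mathfrak{z}(p^{-1})$, divided by the unit $(1-1/p)$ that regularises the missing Euler factor at $\chi=1$.

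For Part~2, apply Part~1 to $p^-(\mathfrak{Z})\in H^1_{\mathrm{Iw}}(\Q_{p,\infty},\T^-)$ and set $\mathfrak{z}':=p^-(\mathfrak{z})\in\mathrm{Hom}_{\mathrm{cont}}(\Q_p^\ast,\Q_p)$. The image of $p^-:H^1(\Q_p,V_p(A))\to\mathrm{Hom}_{\mathrm{cont}}(\Q_p^\ast,\Q_p)$ is the one-dimensional subspace cut out by cup-product with the Tate extension class $[q_A]\in H^1(\Q_p,\Q_p(1))$: writing $\mathfrak{z}'=a\log_p+b\,\mathrm{ord}_p$, the constraint $\mathfrak{z}'(q_A)=0$ forces $b=-a\mathscr{L}_p(A)$, so that $\mathfrak{z}'(e(1))=a=\exp^\ast_A(\mathfrak{z})$ and $\mathfrak{z}'(p^{-1})=-b=\mathscr{L}_p(A)\exp^\ast_A(\mathfrak{z})$; substituting in Part~1 and using $s-k/2=(s-1)-(k-2)/2$ yields Part~2. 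The main obstacle is the $(s-1)$-coefficient in Part~1: the factor $(1-\chi(p)/\psi(\mathbf{a}_p))$ in the multiplier $\mathcal{E}(\psi,\chi)$ makes the interpolation formula vanish at the trivial cyclotomic character, and isolating the correct first-order term requires a careful Coleman--Perrin-Riou analysis of the unramified Iwasawa cohomology, which is the technical heart of the argument.
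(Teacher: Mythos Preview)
Your outline is correct and follows the same strategy as the paper: decompose $\mathcal{L}_\T(\mathfrak{Z})\bmod\mathscr{P}^2$ into its augmentation component (the $(k-2)$-direction, handled via the factorisation $\varepsilon(\mathcal{L}_\T(\mathfrak{Z}))=(1-\mathbf{a}_p^{-1})\cdot\mathcal{L}_\T^\ast(\mathfrak{Z}_0)$ together with the Greenberg--Stevens derivative of $\mathbf{a}_p$) and its $\psi$-specialisation (the $(s-1)$-direction, handled via the Coleman map $\mathcal{L}_A$ and its derivative), then deduce Part~2 from Part~1 using that the image of $p^-$ is $\Q_p\cdot\log_{q_A}$. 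The paper makes your two sketchy points rigorous by separately constructing the \emph{improved} big dual exponential $\mathcal{L}_\T^\ast:H^1(\Q_p,\T^-)\to R\otimes\Q_p$ (so that the $\nu\mapsto\langle\exp^\ast(\mathfrak{Z}_{0,\nu}),\mho_\nu(1)\rangle_{\mathrm{dR}}$ really interpolate to an element of $R[1/p]$ one can differentiate) and by proving the Coleman-map derivative formula $l_\varsigma\cdot\mathcal{L}_A'(z)=z_0(p^{-1})\{\varsigma\}$ via an explicit norm-compatible system of principal units and Coleman power series; these are precisely the two ingredients you flag as the technical heart.
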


The proof  of Theorem $\ref{mainderalg}$ is given in  Section $\ref{proofmainderalg}$. We  consider separately the partial derivatives
of  $\mathcal{L}_{\T}(\cdot{},k,s)$ with respect to the cyclotomic variable $s$ and
the weight variable $k$. In order to compute the derivative in the cyclotomic direction, we  make use of the work of
Wiles \cite{Wiles-rec} and Coleman \cite{Col}.
To compute the derivative in the weight direction, we prove the existence of   an \emph{improved big dual exponential},
and then invoke a formula of Greenberg--Stevens which relates the  derivative of the $p$-th Fourier coefficient of $f_{\infty}$
to the $\mathscr{L}$-invariant $\mathscr{L}_{p}(A)$
\cite{G-S}.

\subsection{The Coleman map}\label{colprsec}
In this section we first recall, following \cite{Rubs}, the definition of the cyclotomic  big dual exponential 
$\mathcal{L}_{A}:=\mathcal{L}_{T_{p}(A)}$ for the $p$-adic Tate module of $A$,
called the \emph{Coleman map}. In our exceptional zero situation, it is a morphism of $\Ic$-algebras 
$$\mathcal{L}_{A} : H^{1}_{\mathrm{Iw}}(\Q_{p,\infty},T_{p}(A))\lfre{}I,$$
factoring  through the Iwasawa cohomology of $\Z_{p}=T_{p}(A)^{-}$, 
where $I=I_{\mathrm{cyc}}$ is the 
augmentation ideal of $\Ic$. 
We then prove in Proposition $\ref{dercol}$ a  simple formula for its derivative at the augmentation ideal.
While versions of Proposition $\ref{dercol}$
already appear in the literature (e.g. it follows from Proposition A.3.1
of \cite{L-V-Z}), we give here a proof in our setting for the convenience of the reader.

\subsubsection{Definition of $\mathcal{L}_{A}$}\label{fix varsigma} For every $n\in{}\N\cup{}\{\infty\}$, 
identify  $G_{n}:=\mathrm{Gal}(\Q_{n}/\Q)$
with the Galois group of $\Q_{p,n}/\Q_{p}$ via the  unique prime of $\Q_{\infty}$ dividing $p$.
Then $\Ic=\Z_{p}\llbracket{}G_{\infty}\rrbracket$ is identified with the Iwasawa algebra of the 
cyclotomic $\Z_{p}$-extension  $\Q_{p,\infty}/\Q_{p}$. 
Let $\Z_{p,n}$ and $\mathfrak{m}_{n}$ be the  ring of integers of $\Q_{p,n}$
and its maximal ideal respectively,
and let $N_{m,n} : \Q_{p,m}^{\ast}\fre{}\Q_{p,n}^{\ast}$
be the norm map, for  $m\geq{}n$.

Fix a generator $\zeta_{\infty}=(\zeta_{p^{n}})_{n\in{}\N}\in{}\Z_{p}(1)$.
As in \cite[Appendix]{Rubs}, define for every $n\in{}\N$:
\[
        x_{n}:=p+\mathrm{Trace}_{\Q_{p}(\mu_{p^{n+1}})/\Q_{p,n}}\lri{\sum_{k=0}^{n}\frac{\zeta_{p^{n+1-k}}-1}{p^{k}}}\in{}\Q_{p,n}.
\]
A simple computation shows that these elements are compatible with respect to the trace maps.
The following key lemma is due to Coleman (cf. Theorem 24 of \cite{Col}).

\begin{lemma}\label{co2} There exists a unique principal unit $g(X)\in{}1+(p,X)\cdot{}\Z_p
\llbracket{}X\rrbracket$ such that:
\begin{itemize}
\item[$1.$] $\log_p(g(0))=p$;
\item[$2.$] $\mathfrak{C}_{n}:=g(\zeta_{p^{n+1}}-1)\in{}1+\mathfrak{m}_{n}$ and $\log_p\lri{\mathfrak{C}_{n}}=x_{n}$ for every $n\in{}\N$;
\item[$3.$] $N_{m,n}(\mathfrak{C}_{m})=\mathfrak{C}_{n}$ for every $m\geq{}n\geq{}0$.
\end{itemize}
\end{lemma}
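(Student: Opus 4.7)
The plan is to invoke Coleman's classical interpolation theorem (Theorem 24 of \cite{Col}, which the paper itself cites), producing a unique power series in $\Z_p\llbracket X\rrbracket$ attached to any norm-coherent system of principal units in the cyclotomic tower, and to verify that the prescribed logarithm sequence $(x_n)_n$ can be realised by such a system.

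The first step is to check that $(x_n)_n$ is trace-compatible: $\mathrm{Tr}_{\Q_{p,m}/\Q_{p,n}}(x_m) = x_n$ for every $m \geq n$. By transitivity of the trace this reduces to the one-step case $m = n+1$. The computation is direct, using the standard identities $\mathrm{Tr}_{\Q_p(\mu_{p^{r+1}})/\Q_p(\mu_{p^r})}(\zeta_{p^{r+1}}) = 0$ for $r \geq 1$ (since the minimal polynomial of $\zeta_{p^{r+1}}$ over $\Q_p(\mu_{p^r})$ is $X^p - \zeta_{p^r}$) and $\mathrm{Tr}_{\Q_p(\mu_p)/\Q_p}(\zeta_p) = -1$, together with the telescoping behaviour of the sum $\sum_{k=0}^m (\zeta_{p^{m+1-k}}-1)/p^k$ under the trace. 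The constant $+p$ in the definition of $x_n$ is exactly what is needed to absorb the degree factor $[\Q_{p,m}:\Q_{p,n}] = p^{m-n}$ coming from the trace of the $+p$ in $x_m$.

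With trace-compatibility in hand, I would produce principal units $\mathfrak{C}_n \in 1 + \mathfrak{m}_n$ with $\log_p(\mathfrak{C}_n) = x_n$, arranged so that they form a norm-coherent sequence $N_{m,n}(\mathfrak{C}_m) = \mathfrak{C}_n$. The $p$-power torsion ambiguity in the logarithm is resolved by a standard Iwasawa-theoretic argument together with Coleman's rigidity: there is a unique choice compatible with the norm-operator equation $\mathcal{N}(g) = g$ imposed by Coleman's theorem. Applying that theorem to the sequence $(\mathfrak{C}_n)$ yields a unique $g(X) \in 1 + (p, X)\Z_p\llbracket X\rrbracket$ with $g(\zeta_{p^{n+1}} - 1) = \mathfrak{C}_n$ for all $n$. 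Finally, condition (1) on the value at $X = 0$ follows by evaluating the norm-operator relation at $X = 0$, which expresses $g(0)$ in terms of the data of the sequence; the extra summand $+p$ in $x_n$ is precisely designed so that the resulting $g(0) \in 1 + p\Z_p$ satisfies $\log_p(g(0)) = p$.

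The main obstacle is the careful bookkeeping of torsion ambiguities and the identification of the contribution of the constant $+p$ with the value $\log_p(g(0))$; this is a classical but delicate point, already handled by Coleman, and can alternatively be extracted from Iwasawa's explicit description of the norm-coherent system of cyclotomic units. Uniqueness of $g$ under the three stated conditions is then immediate, since a power series in $1 + (p,X)\Z_p\llbracket X\rrbracket$ is determined by its values at $X = 0$ and at $\zeta_{p^{n+1}} - 1$ for all $n \geq 0$, and these values are in turn pinned down (modulo torsion) by the prescribed logarithms, with the torsion ambiguity removed by norm-coherence.
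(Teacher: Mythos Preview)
The paper does not give a proof of this lemma at all: its entire argument is the one-line reference ``See \cite[Appendix]{Rubs} or \cite[Appendix D]{Rub}.'' Your sketch is essentially an outline of what those references do (trace-compatibility of the $x_n$, existence of a norm-coherent lift to principal units, then Coleman's interpolation theorem to produce $g$), so in that sense you are aligned with the intended approach.

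That said, one step in your outline is vaguer than the rest and is exactly the point where the work lies: producing a \emph{norm-coherent} sequence $(\mathfrak{C}_n)$ of principal units with $\log_p(\mathfrak{C}_n)=x_n$ from the trace-compatible sequence $(x_n)$. Trace-compatibility of logarithms does not by itself give norm-compatibility of their exponentials, both because $\exp_p$ need not converge on the relevant elements and because of the $\mu_{p^\infty}$-ambiguity you mention. In Rubin's treatment this is handled by an explicit construction (writing down a power series, or equivalently a specific coherent unit, and checking its properties) rather than by an abstract lifting argument; your phrase ``standard Iwasawa-theoretic argument together with Coleman's rigidity'' would need to be made concrete. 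If you want a self-contained proof rather than a citation, this is the step to flesh out.
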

\begin{proof} See \cite[Appendix]{Rubs} or \cite[Appendix D]{Rub}.
\end{proof}

Identify $H^{1}(\Q_{p,n},\Z_p(1))=\Q_{p,n}^{\ast}\widehat{\otimes}\Z_p$ by Kummer theory.  The preceding lemma allows us to define
\[
       \mathfrak{C}:=\lri{\mathfrak{C}_{n}\widehat{\otimes{}}1}_{n\in\N}\in{}H^{1}_{\mathrm{Iw}}(\Q_{p,\infty},\Z_p(1)).
\]
Fix a topological generator $\sigma_{0}\in{}G_{\infty}$, and  write $\varsigma:=\sigma_{0}-1\in{}I$
for the corresponding generator of  $I\subset{}\Ic$.

\begin{lemma}\label{der col class} 
There exists a unique $\mathfrak{C}^{\prime}:=\mathfrak{C}^{\prime}_{\varsigma}\in{}H^{1}_{\mathrm{Iw}}(\Q_{p,\infty},\Z_p(1))$ 
such that $\mathfrak{C}=\varsigma\cdot{}\mathfrak{C}^{\prime}$.
\end{lemma}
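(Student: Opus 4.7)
The plan is to use the short exact sequence of $\Ic$-modules
\[
  0 \lfre{} \Ic \lfre{\cdot{}\varsigma} \Ic \lfre{\mathrm{aug}} \Z_{p} \lfre{} 0,
\]
where $\mathrm{aug}$ denotes the augmentation. Tensoring over $\Z_{p}$ with $\Z_{p}(1)$ and passing to continuous Galois cohomology of $G_{\Q_{p}}$, Shapiro's lemma identifies $H^{q}(\Q_{p}, \Ic \otimes_{\Z_{p}} \Z_{p}(1))$ with $H^{q}_{\mathrm{Iw}}(\Q_{p,\infty}, \Z_{p}(1))$. Since $p>2$, the group $\Delta := \mathrm{Gal}(\Q_{p}(\mu_{p})/\Q_{p})$ acts non-trivially on $\mu_{p}$, so $\mu_{p} \cap \Q_{p,n} = \{1\}$ for every $n$; hence $H^{0}_{\mathrm{Iw}}(\Q_{p,\infty}, \Z_{p}(1)) = 0$, and the long exact sequence degenerates to
\[
  0 \lfre{} H^{1}_{\mathrm{Iw}}(\Q_{p,\infty}, \Z_{p}(1)) \lfre{\cdot{}\varsigma} H^{1}_{\mathrm{Iw}}(\Q_{p,\infty}, \Z_{p}(1)) \lfre{\mathrm{pr}_{0}} H^{1}(\Q_{p}, \Z_{p}(1)).
\]
Here the rightmost arrow, induced by the augmentation, corresponds under Shapiro's identification to the canonical projection $(\mathfrak{z}_{n})_{n} \mapsto \mathfrak{z}_{0}$ onto the bottom layer of the inverse limit.

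The injectivity of multiplication by $\varsigma$ gives the uniqueness of $\mathfrak{C}'$ at once. For existence, one needs only to check that $\mathrm{pr}_{0}(\mathfrak{C}) = \mathfrak{C}_{0} \widehat{\otimes} 1 = 0$ in $H^{1}(\Q_{p}, \Z_{p}(1)) \cong \Q_{p}^{\ast} \widehat{\otimes} \Z_{p}$ (via Kummer theory). By Lemma \ref{co2}(2), $\log_{p}(\mathfrak{C}_{0}) = x_{0}$, and an explicit calculation yields
\[
  x_{0} = p + \mathrm{Trace}_{\Q_{p}(\mu_{p})/\Q_{p}}(\zeta_{p} - 1) = p + \big(-1 - (p-1)\big) = 0,
\]
using that the sum of the primitive $p$-th roots of unity equals $-1$. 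Since $\mathfrak{C}_{0} \in 1 + p\Z_{p}$ and $\log_{p}$ is injective on $1 + p\Z_{p}$ for odd $p$, this forces $\mathfrak{C}_{0} = 1$ in $\Q_{p}^{\ast}$, whence $\mathrm{pr}_{0}(\mathfrak{C}) = 0$ and the desired $\mathfrak{C}'$ exists.

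The one non-routine point is the identification of the boundary map $H^{1}_{\mathrm{Iw}}/\varsigma H^{1}_{\mathrm{Iw}} \hookrightarrow H^{1}(\Q_{p}, \Z_{p}(1))$ with the bottom-layer projection $\mathrm{pr}_{0}$. This is a standard consequence of Shapiro's lemma but requires some care with sign and action conventions, namely that the chosen $G_{\Q_{p}}$-action on $\Ic$ and the topological generator $\sigma_{0}$ be compatible with the corestriction maps used in the definition of $H^{1}_{\mathrm{Iw}}$. Once this is in place, the lemma reduces entirely to the explicit computation $x_0=0$ above.
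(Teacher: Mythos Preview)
Your proof is correct and follows essentially the same approach as the paper: both reduce the problem to the injection $H^{1}_{\mathrm{Iw}}(\Q_{p,\infty},\Z_{p}(1))/\varsigma \hookrightarrow H^{1}(\Q_{p},\Z_{p}(1))$ (the paper phrases it via corestriction, you via Shapiro's lemma applied to the augmentation sequence), use the vanishing of $H^{0}$ for uniqueness, and then verify $\mathfrak{C}_{0}=1$ by computing $x_{0}=0$ and invoking injectivity of $\log_{p}$ on $1+p\Z_{p}$. The only cosmetic difference is that the paper is terser and does not dwell on the Shapiro identification you flag as the ``non-routine point''.
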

\begin{proof} The corestriction map induces an injective map:
$H^{1}_{\mathrm{Iw}}(\Q_{p,\infty},\Z_{p}(1))/\varsigma\hookrightarrow{}H^{1}(\Q_{p},\Z_{p}(1))$,
and the $\varsigma$-torsion submodule  $H^{1}_{\mathrm{Iw}}(\Q_{\infty,p},\Z_{p}(1))[\varsigma]$ is trivial
(being a quotient of $H^{0}(\Q_{p},\Z_{p}(1))$).
It is then sufficient to prove that the principal unit  $\mathfrak{C}_{0}$ is equal to $1$. 
Note that $x_{0}=0$,
as $\mathrm{Trace}_{\Q_{p}(\mu_{p})/\Q_{p}}(\zeta_{p}-1)=-p$. By Lemma $\ref{co2}(2)$ 
this implies  $\log_{p}(\mathfrak{C}_{0})=0$,
i.e. $\mathfrak{C}_{0}=1$ (as $p\not=2$).
\end{proof}

By local Tate duality, there is a natural morphism of $\Ic$-modules 
\[
                 \dia{-,-}_{\infty} : H^{1}_{\mathrm{Iw}}(\Q_{p,\infty},\Z_{p})\otimes_{\Ic}H^{1}_{\mathrm{Iw}}(\Q_{p,\infty},\Z_{p}(1))^{\iota}
                 \lfre{}\Ic.
\]
Here $\iota$ is Iwasawa's main involution on $\Ic$, i.e. the isomorphism of $\Z_{p}$-algebras which acts 
as inversion on $G_{\infty}$,
and $H^{1}_{\mathrm{Iw}}(\Q_{p,\infty},\Z_{p}(1))^{\iota}$
denotes the  $\Z_{p}$-module $H^{1}_{\mathrm{Iw}}(\Q_{p,\infty},\Z_{p}(1))$, with $\Ic$-action 
obtained by twisting the original action by $\iota$. (See, e.g. Section 2.1.5 of \cite{PR} for the definition of $\dia{-,-}_{\infty}$.) 
Define
\[
                \mathcal{L}_{A}:=-\dia{\cdot{},\mathfrak{C}}_{\infty} :  H^{1}_{\mathrm{Iw}}(\Q_{p,\infty},\Z_{p})\lfre{}I.
\]
The fact that $\mathcal{L}_{A}$ takes values in the augmentation ideal follows from Lemma $\ref{der col class}$
(as $\iota(\varsigma)=-\sigma_{0}^{-1}\varsigma\in{}I$).
 The following proposition is a version of the Coleman--Wiles explicit reciprocity law
 \cite{Wiles-rec}, \cite{Col};
we refer to \cite[Appendix]{Rubs} (or \cite[Section 13.2]{PhD}) for a proof in our setting.

\begin{proposition}\label{coleman map}  
For every $z=(z_{n})\in{}H^{1}_{\mathrm{Iw}}(\Q_{p,\infty},\Z_p)$ 
and every non-trivial character $\chi$ of $G_{n}$ 
\[
     \chi\big(\mathcal{L}_{A}(z)\big)=\tau(\chi)\sum_{\sigma\in{}G_{n}} \chi(\sigma)^{-1}
     \cdot{}\mathrm{exp}_{n}^{\ast}\big(z_{n}^{\sigma}\big),
\]
where $\exp_{n}^{\ast} : H^{1}(\Q_{p,n},\Q_{p})\fre{}D_{\mathrm{dR},n}(\Q_{p})=\Q_{p,n}$ is the Bloch--Kato dual exponential map. 
\end{proposition}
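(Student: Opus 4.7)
\emph{Plan of proof.} The strategy is to unwind the definition of $\mathcal{L}_A = -\langle \cdot, \mathfrak{C}\rangle_\infty$, reduce the computation to a local Tate pairing at finite level $n$, then apply the Coleman--Wiles explicit reciprocity law to convert the Tate pairing into a product of Bloch--Kato dual exponentials and $p$-adic logarithms, and finally extract the Gauss sum $\tau(\chi)$ from the cyclotomic contribution to $\log_p(\mathfrak{C}_n) = x_n$.

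First I would recall that the Iwasawa pairing $\dia{-,-}_\infty$ is built levelwise from local Tate cup-products $\dia{-,-}_n : H^1(\Q_{p,n},\Q_p) \otimes H^1(\Q_{p,n},\Q_p(1)) \to \Q_p$, with the $\iota$-twist encoding the fact that on the second factor Galois acts by inversion with respect to the Iwasawa action. For a character $\chi$ of $G_n$, evaluating $\chi$ on $\mathcal{L}_A(z) = -\langle z, \mathfrak{C}\rangle_\infty$ produces the finite sum
\[
\chi\big(\mathcal{L}_A(z)\big) = -\sum_{\sigma \in G_n}\chi(\sigma)^{-1}\,\dia{z_n,\mathfrak{C}_n^{\sigma}}_n,
\]
by a standard calculation comparing the twisted group-ring action to the Galois action on cohomology classes at finite level (this is the content of, e.g., Rubin's Appendix).

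Next I would apply the classical Wiles--Coleman explicit reciprocity law, which identifies the local Tate pairing with the pairing between the Bloch--Kato dual exponential and the $p$-adic logarithm: for $c \in H^1(\Q_{p,n},\Q_p)$ and $u \in \Q_{p,n}^*\widehat{\otimes}\Z_p \cong H^1(\Q_{p,n},\Z_p(1))$ one has
\[
\dia{c,u}_n = \mathrm{Tr}_{\Q_{p,n}/\Q_p}\!\big(\exp_n^{*}(c)\cdot{}\log_p(u)\big).
\]
Combined with Lemma~\ref{co2}(2), which gives $\log_p(\mathfrak{C}_n^\sigma) = \sigma(x_n)$, the displayed sum becomes
\[
\chi\big(\mathcal{L}_A(z)\big) = -\sum_{\sigma \in G_n}\chi(\sigma)^{-1}\,\mathrm{Tr}_{\Q_{p,n}/\Q_p}\!\big(\exp_n^{*}(z_n^\sigma)\cdot{}\sigma(x_n)\big).
\]

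Finally one has to extract the Gauss sum. Substituting the explicit formula $x_n = p + \mathrm{Tr}_{\Q_p(\mu_{p^{n+1}})/\Q_{p,n}}\!\sum_{k=0}^n(\zeta_{p^{n+1-k}}-1)/p^k$ and using Galois-equivariance of $\exp_n^*$ together with the fact that $\chi$ is non-trivial (so the constant contribution $p$ drops out and the lower layers in the partial-fraction decomposition vanish against $\chi^{-1}$), the sum collapses to the top term and one is reduced to the identity
\[
\sum_{\sigma\in G_n}\chi(\sigma)^{-1}\sigma(\zeta_{p^{n+1}}-1) \,\text{-type expressions} \;=\; -\tau(\chi),
\]
absorbing the trace from $\Q_p(\mu_{p^{n+1}})$ down to $\Q_{p,n}$ and then to $\Q_p$. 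Combining this with the sign produced above yields the claimed formula. The main obstacle is precisely this last step: matching signs, normalisations, and the conductor dependence in the Gauss sum so that the cyclotomic tail of $x_n$ contributes exactly $\tau(\chi)$ after the twisted trace. This computation is carried out in detail in Rubin's appendix to \cite{Rubs} and in Chapter~13 of the author's thesis, to which the proof reduces.
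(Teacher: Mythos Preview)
The paper does not give a self-contained proof of this proposition: it simply refers to Rubin's appendix in \cite{Rubs} (and to Section~13.2 of the author's thesis), which is exactly where your sketch lands after outlining the standard strategy of unwinding the Iwasawa pairing, applying the Coleman--Wiles explicit reciprocity law, and extracting the Gauss sum. Your proposal is therefore correct and follows the same route as the cited references; indeed it is more detailed than what appears in the paper itself.
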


With a slight abuse of notation, denote again by
$\mathcal{L}_{A} : H^{1}_{\mathrm{Iw}}(\Q_{p,\infty},T_{p}(A))\lfre{}I$
the composition of $\mathcal{L}_{A}$ with the map induced by the projection $p^{-} : T_{p}(A)\twoheadrightarrow{}\Z_{p}$ 
(see $(\ref{eq:tate exact})$). 
Note the following corollary.

\begin{cor}\label{Ochiai vs Coleman} Let $\T^{?}$ denote either $\T^{-}$ or $\T$. For every $\mathfrak{Z}\in{}H^{1}_{\mathrm{Iw}}(\Q_{p,\infty},\T^{?})$: 
\[
               \psi\big(\mathcal{L}_{\T}(\mathfrak{Z})\big)=\mathcal{L}_{A}\big(\mathfrak{Z}_{\psi}\big),
\]
where $\mathfrak{Z}_{\psi}\in{}H^{1}_{\mathrm{Iw}}(\Q_{p,\infty},T_{p}(A)^{?})$ is the image of 
$\mathfrak{Z}$ under the morphism induced by
 $\T^{?}\twoheadrightarrow{}\T_{\psi}^{?}\cong{}T_{p}(A)^{?}$.
\end{cor}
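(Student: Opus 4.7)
The plan is to reduce the assertion to the $\T^{-}$ case and then compare values under all finite-order characters, using the two interpolation formulae of Propositions \ref{modifiedochiai} and \ref{coleman map}.

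First I would observe that both $\mathcal{L}_{\T}$ and $\mathcal{L}_{A}$ on the Iwasawa cohomology of $\T$ (respectively $T_{p}(A)$) are defined by first composing with the projection $p^{-}\colon \T \twoheadrightarrow \T^{-}$ (respectively $T_{p}(A) \twoheadrightarrow \Z_{p}$). These projections are compatible under the identification $\pi_{f}^{-}\colon \T_{\psi}^{-} \cong \Z_{p}$ of (\ref{eq:ES tate pm}), so it suffices to treat the case $?=-$, i.e. $\mathfrak{Z} \in H^{1}_{\mathrm{Iw}}(\Q_{p,\infty}, \T^{-})$.

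Both $\psi(\mathcal{L}_{\T}(\mathfrak{Z}))$ and $\mathcal{L}_{A}(\mathfrak{Z}_{\psi})$ lie in $I \subset \Ic$: the former because $\mathcal{L}_{\T}(\mathfrak{Z}) \in \overline{\mathfrak{p}} = (\mathfrak{p}, I)$ and $\psi$ annihilates $\mathfrak{p}$. To show equality it is enough to test against all finite-order characters $\chi$ of $G_{n}$ for every $n \geq 0$, since such characters separate elements of $\Ic$. For $\chi$ trivial both sides vanish, since both lie in $I$. For $\chi$ non-trivial, of conductor $p^{m}$ with $m \geq 1$, primitivity forces $\chi(p)=0$, and combined with $\psi(\mathbf{a}_{p}) = a_{p}(A) = 1$ the Euler factor in Proposition \ref{modifiedochiai} collapses to $\mathcal{E}(\psi, \chi) = \tau(\chi)$.

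It then remains to identify the de Rham pairing on the left with the Bloch--Kato dual exponential $\exp_{n}^{\ast}\colon H^{1}(\Q_{p,n}, \Q_{p}) \to \Q_{p,n}$ appearing on the right. Under $\pi_{f}^{-}$ the representation $V_{\psi}^{-}$ becomes $\Q_{p}$ and the natural duality $V_{\psi}^{-} \otimes \check{V}_{\psi}^{+}(1) \to \Q_{p}(1)$ is the tautological product $\Q_{p} \otimes \Q_{p}(1) \to \Q_{p}(1)$. The normalization (\ref{eq:norm mho}), which fixes $\mho_{\psi}(1) = \zeta_{\mathrm{dR}}$, is precisely what is required to identify $\langle \exp^{\ast}(x), \mho_{\psi}(1)\rangle_{\mathrm{dR}}$ with $\exp_{n}^{\ast}(x)$ for every $x \in H^{1}(\Q_{p,n}, V_{\psi}^{-})$. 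Substituting into Ochiai's formula gives $\psi \times \chi(\mathcal{L}_{\T}(\mathfrak{Z})) = \tau(\chi) \sum_{\sigma \in G_{n}} \chi(\sigma)^{-1} \exp_{n}^{\ast}(\mathfrak{Z}_{n,\psi}^{\sigma})$, which equals $\chi(\mathcal{L}_{A}(\mathfrak{Z}_{\psi}))$ by Proposition \ref{coleman map}.

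The main obstacle is this last identification of the two pairings; it is essentially a bookkeeping exercise, but requires carefully tracking the twist by $\Q_{p}(1)$, the choice of generator $\zeta_{\mathrm{dR}}$ of $D_{\mathrm{dR}}(\Q_{p}(1))$, and the isomorphism $\pi_{f}^{-}$. In short, the compatibility is built into the definitions of the period $\mho$ and of Ochiai's normalization, which were designed precisely to make the specialization of $\mathcal{L}_{\T}$ at $\psi$ coincide with the Coleman map $\mathcal{L}_{A}$.
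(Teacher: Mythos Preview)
Your proposal is correct and follows essentially the same approach as the paper's proof, which simply says that the equality follows from $\psi(\mathbf{a}_{p})=1$, the normalisation $(\ref{eq:norm mho})$, and the interpolation properties of $\psi\circ\mathcal{L}_{\T}$ and $\mathcal{L}_{A}$. You have spelled out in detail precisely what the paper leaves implicit: the reduction to $\T^{-}$, the separate treatment of the trivial character via the observation that both sides lie in $I$, the collapse of $\mathcal{E}(\psi,\chi)$ to $\tau(\chi)$ for non-trivial $\chi$, and the identification of the de Rham pairing with $\exp_{n}^{\ast}$ under the normalisation $\mho_{\psi}(1)=\zeta_{\mathrm{dR}}$.
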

\begin{proof} As $\psi(\mathbf{a}_{p})=1$, this follows from  $(\ref{eq:norm mho})$ and the interpolation properties of 
$\psi\circ{}\mathcal{L}_{\T}$ and $\mathcal{L}_{A}$.
\end{proof}

\subsubsection{The derivative of $\mathcal{L}_{A}$} If $M$ denotes either $T_{p}(A)$ or $\Z_{p}$, 
define the \emph{derivative of $\mathcal{L}_{A}$}:
\[
            \mathcal{L}_{A}^{\prime} : H^{1}_{\mathrm{Iw}}(\Q_{p,\infty},M)\lfre{}I/I^{2}
\]
as the composition of $\mathcal{L}_{A}$ with the projection $\{\cdot{}\} : I\twoheadrightarrow{}I/I^{2}$.
Set $\log_{p}(\varsigma):=\log_{p}\big(\chi_{\mathrm{cyc}}(\sigma_{0})\big)$ and 
$$l_{\varsigma}:=\log_{p}(\varsigma)\cdot{}\big(1-p^{-1}\big)\in{}\Z_{p}^{\ast},$$ where 
$\varsigma=\sigma_{0}-1$  is our fixed generator of $I$.
As in Section $\ref{derbigdualsec}$, the cohomology group  $H^{1}(\Q_{p},\Z_{p})$
is identified with $\mathrm{Hom}_{\mathrm{cont}}(\Q_{p}^{\ast},\Z_{p})$
via the local reciprocity map.

\begin{proposition}\label{dercol} Let $z=(z_{n})\in{}H^{1}_{\mathrm{Iw}}(\Q_{p,\infty},\Z_p)$.
Then
$l_{\varsigma}\cdot{}\mathcal{L}^{\prime}_{A}(z)=z_{0}\lri{p^{-1}}\left\{\varsigma\right\}$.
\end{proposition}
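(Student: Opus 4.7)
The plan is to compute $\mathcal{L}_{A}(z)$ modulo $I^{2}$ by exploiting the factorisation $\mathfrak{C}=\varsigma\cdot\mathfrak{C}^{\prime}$ of Lemma \ref{der col class}. Starting from $\mathcal{L}_{A}(z)=-\langle z,\mathfrak{C}\rangle_{\infty}$ and using that the Iwasawa-theoretic local Tate pairing is $\iw$-linear in the first factor and $\iota$-linear in the second, I write
\[
\mathcal{L}_{A}(z)=-\iota(\varsigma)\cdot\langle z,\mathfrak{C}^{\prime}\rangle_{\infty}.
\]
The elementary congruence $\iota(\varsigma)=\sigma_{0}^{-1}-1\equiv -\varsigma\pmod{I^{2}}$, combined with the standard fact that the augmentation $\iw\twoheadrightarrow\Z_{p}$ sends $\langle z,\mathfrak{C}^{\prime}\rangle_{\infty}$ to the level-zero local Tate pairing $\langle z_{0},\mathfrak{C}^{\prime}_{0}\rangle_{0}\in\Z_{p}$, then yields $\mathcal{L}_{A}(z)\equiv\langle z_{0},\mathfrak{C}^{\prime}_{0}\rangle_{0}\cdot\varsigma\pmod{I^{2}}$, so that $\mathcal{L}_{A}^{\prime}(z)=\langle z_{0},\mathfrak{C}^{\prime}_{0}\rangle_{0}\cdot\{\varsigma\}$ in $I/I^{2}$.

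Next, under the identifications $H^{1}(\Q_{p},\Z_{p})\cong\mathrm{Hom}_{\mathrm{cont}}(\Q_{p}^{\ast},\Z_{p})$ (via local reciprocity) and $H^{1}(\Q_{p},\Z_{p}(1))\cong\Q_{p}^{\ast}\widehat{\otimes}\Z_{p}$ (via Kummer theory), the local Tate pairing $\langle\cdot,\cdot\rangle_{0}$ becomes the evaluation pairing $(f,u)\mapsto f(u)$, and the statement of the proposition is equivalent to the identity in $\Q_{p}^{\ast}\widehat{\otimes}\Z_{p}$:
\[
l_{\varsigma}\cdot\mathfrak{C}^{\prime}_{0}=p^{-1}.
\]
Since $\Q_{p}^{\ast}\widehat{\otimes}\Z_{p}\cong\Z_{p}\oplus\Z_{p}$ via the $p$-adic valuation $v_{p}$ and any branch of $\log_{p}$, it suffices to check this identity on both components.

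The main technical obstacle is precisely this component-wise verification, which amounts to a sharp form of the Coleman--Wiles explicit reciprocity law at the exceptional zero. The essential input is Lemma \ref{co2}, together with the fact (already visible in $\log_{p}(g(0))=p$) that the cyclotomic norm-compatible system $(\mathfrak{C}_{n})$ is built out of units whose $(\sigma_{0}-1)$-trivialisation carries a non-trivial valuation at $p$; the factor $l_{\varsigma}=\log_{p}(\varsigma)\cdot(1-p^{-1})$ then appears as the product of the normalisation of the topological generator $\sigma_{0}$ and the Euler correction $(1-p^{-1})$ from the $p$-adic multiplier at the exceptional zero. An alternative route, which I would pursue in parallel, is to invoke Proposition \ref{coleman map} directly on a fixed test class $z$ with $z_{0}(p^{-1})=1$, evaluating at characters $\chi$ of $G_{1}$ of order $p$ and extracting the $I/I^{2}$-coefficient of $\mathcal{L}_{A}(z)$ by dividing by $\chi(\sigma_{0})-1$; the combination of the Gauss sum $\tau(\chi)$ and the factor $(1-p^{-1})$ coming from $\exp^{\ast}$ should produce the $l_{\varsigma}^{-1}$ in the statement.
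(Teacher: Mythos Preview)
Your reduction is correct and coincides with the paper's: from $\mathfrak{C}=\varsigma\cdot\mathfrak{C}'$ and $\iota(\varsigma)\equiv-\varsigma\pmod{I^{2}}$ you obtain $\mathcal{L}_{A}'(z)=\langle z_{0},\mathfrak{C}_{0}'\rangle_{0}\cdot\{\varsigma\}$, exactly as in the paper's equation for $\mathcal{L}_{A}'$. (A small caveat: with the paper's conventions the local Tate pairing is $\langle z_{0},x\rangle=z_{0}(x^{-1})$, not $z_{0}(x)$; this flips your target identity from $l_{\varsigma}\cdot\mathfrak{C}_{0}'=p^{-1}$ to $l_{\varsigma}\cdot\mathfrak{C}_{0}'=p$, but that is a normalisation issue, not a real obstacle.)

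The genuine gap is that you do not prove the identity $l_{\varsigma}\cdot\mathfrak{C}_{0}'=p^{\pm1}$; you only gesture at the ingredients. Two things are missing. First, you must show that $\mathfrak{C}_{0}'$ has trivial principal-unit component, i.e.\ lies in $p^{\Z_{p}}\subset\Q_{p}^{\ast}\widehat{\otimes}\Z_{p}$. The paper does this by observing, via local class field theory, that the image of corestriction $H^{1}_{\mathrm{Iw}}(\Q_{p,\infty},\Z_{p}(1))\to H^{1}(\Q_{p},\Z_{p}(1))$ is exactly $\widehat{p}$; this is a statement about universal norms, not something you can read off from Lemma~\ref{co2} alone. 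Second, and this is the heart of the matter, you must compute $\mathrm{ord}_{p}(\mathfrak{C}_{0}')=l_{\varsigma}^{-1}$. The paper's argument is a concrete Coleman power series manipulation: one uses Coleman's theorem to write $g(T)=f_{\mathfrak{C}}(T)$, splits $\mathfrak{C}'$ into its principal-unit and uniformiser parts to factor $g(T)$ accordingly, evaluates at $T=0$, and applies $\log_{p}$; the identity $\log_{p}(g(0))=p$ from Lemma~\ref{co2} then yields $p=(p-1)\cdot\mathrm{ord}_{p}(\mathfrak{C}_{0}')\cdot\log_{p}(\chi_{\mathrm{cyc}}(\sigma_{0}))$. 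Your paragraph identifies $\log_{p}(g(0))=p$ as relevant but does not supply this computation.

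Your proposed alternative via Proposition~\ref{coleman map} is unlikely to close the gap as stated: that interpolation formula only covers \emph{non-trivial} characters $\chi$, and extracting the $I/I^{2}$-coefficient requires precisely the limiting behaviour as $\chi\to\mathbf{1}$, which is the exceptional-zero information you are trying to establish. The claimed appearance of $(1-p^{-1})$ from $\exp^{\ast}$ at level $n$ is not justified.
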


Before giving the proof of  Proposition $\ref{dercol}$, we deduce the following corollary. 

\begin{cor}\label{derivative coleman} Let $z=(z_{n})\in{}H^{1}_{\mathrm{Iw}}(\Q_{p,\infty},T_{p}(A))$. 
Then 
\[
     l_{\varsigma}\cdot{}\mathcal{L}_{A}^{\prime}(z)=\mathscr{L}_{p}(A)\cdot{}\exp_{A}^{\ast}(z_{0}){}
\left\{\varsigma\right\}.
\]
In particular $\mathcal{L}_{A}(z)\in{}I^{2}$ if and only if $z_{0}\in{}H^{1}_{f}(\Q_{p},V_{p}(A))\cong{}
A(\Q_{p})\widehat{\otimes}\Q_{p}$.
\end{cor}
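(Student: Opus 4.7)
The plan is to derive the corollary from Proposition \ref{dercol} by pushing forward $z$ along $p^{-}: T_{p}(A)\twoheadrightarrow \Z_{p}$. Let $z^{-}\in{}H^{1}_{\mathrm{Iw}}(\Q_{p,\infty},\Z_{p})$ denote the image of $z$ under the morphism induced by $p^{-}$. By the definition of $\mathcal{L}_{A}$ on Iwasawa cohomology of $T_{p}(A)$ (namely, the composition with $p^{-}$), one has $\mathcal{L}_{A}(z)=\mathcal{L}_{A}(z^{-})$, so Proposition \ref{dercol} gives
\[
   l_{\varsigma}\cdot{}\mathcal{L}_{A}'(z)=z^{-}_{0}(p^{-1})\cdot{}\{\varsigma\},
\]
where $z^{-}_{0}\in{}H^{1}(\Q_{p},\Z_{p})\subset{}\mathrm{Hom}_{\mathrm{cont}}(\Q_{p}^{\ast},\Q_{p})$ is the image of $z_{0}$. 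It then suffices to establish, in $\Q_{p}$, the identity
\[
   z^{-}_{0}(p^{-1})\;=\;\mathscr{L}_{p}(A)\cdot{}\exp_{A}^{\ast}(z_{0}).
\]

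To prove this I would invoke the Tate parametrisation on two different levels. Kummer theory and Tate's theorem identify the extension class of $(\ref{eq:tate exact})$ in $H^{1}(\Q_{p},\Z_{p}(1))=\Q_{p}^{\ast}\widehat{\otimes}\Z_{p}$ with the Tate period $q_{A}$. Since the connecting map $H^{1}(\Q_{p},\Q_{p})\to H^{2}(\Q_{p},\Q_{p}(1))=\Q_{p}$ associated with $(\ref{eq:tate exact})$ is given by cup product with $q_{A}$, the image of $H^{1}(\Q_{p},V_{p}(A))\to H^{1}(\Q_{p},\Q_{p})$ is precisely the $1$-dimensional subspace of $\chi\in{}\mathrm{Hom}_{\mathrm{cont}}(\Q_{p}^{\ast},\Q_{p})$ with $\chi(q_{A})=0$. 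Write $z^{-}_{0}=\alpha\cdot{}v_{p}+\beta\cdot{}\log_{p}$ in the natural basis, where $\log_{p}$ denotes the branch of the $p$-adic logarithm vanishing at $p$. The constraint $z^{-}_{0}(q_{A})=0$ reads $\alpha\cdot{}\mathrm{ord}_{p}(q_{A})+\beta\cdot{}\log_{p}(q_{A})=0$, i.e. $\alpha=-\beta\cdot{}\mathscr{L}_{p}(A)$. Since $v_{p}(p^{-1})=-1$ and $\log_{p}(p^{-1})=0$, this gives $z^{-}_{0}(p^{-1})=-\alpha=\beta\cdot{}\mathscr{L}_{p}(A)$.

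It remains to identify $\exp_{A}^{\ast}(z_{0})$ with $\beta$. By $(\ref{eq:tangent and fil})$, $p^{-}$ induces an isomorphism $\mathrm{Fil}^{0}D_{\mathrm{dR}}(V_{p}(A))\cong{}D_{\mathrm{dR}}(\Q_{p})=\Q_{p}$, and by functoriality of the Bloch--Kato dual exponential one has $\exp_{A}^{\ast}(z_{0})=\exp_{\Q_{p}}^{\ast}(z^{-}_{0})$. A short computation, dual to the explicit description of the Bloch--Kato exponential of $\Q_{p}(1)$ as the $p$-adic logarithm on $1+p\Z_{p}$ via Kummer, shows that $\exp_{\Q_{p}}^{\ast}$ vanishes on the unramified line $\Q_{p}\cdot{}v_{p}$ and sends $\log_{p}$ to $1$, so $\exp_{\Q_{p}}^{\ast}(\alpha\cdot{}v_{p}+\beta\cdot{}\log_{p})=\beta$. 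Combining the two previous paragraphs yields the desired identity. The final assertion follows at once: $\mathcal{L}_{A}(z)\in{}I^{2}$ iff $\mathcal{L}_{A}'(z)=0$ iff $\exp_{A}^{\ast}(z_{0})=0$, as $\mathscr{L}_{p}(A)\not=0$ (equivalently $\log_{p}(q_{A})\not=0$, by \cite{M-man}); and $\ker\big(\exp_{A}^{\ast}|_{H^{1}(\Q_{p},V_{p}(A))}\big)=H^{1}_{f}(\Q_{p},V_{p}(A))\cong{}A(\Q_{p})\widehat{\otimes}\Q_{p}$ by Bloch--Kato and Kummer theory.

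The hard part will be the bookkeeping of normalisations and signs: the Bloch--Kato exponential of $\Q_{p}(1)$ must be pinned down correctly (up to sign it is the Kummer-theoretic logarithm); the connecting map for $(\ref{eq:tate exact})$ must send a generator of $H^{0}(\Q_{p},\Z_{p})$ to $q_{A}$ rather than its inverse; and the basis $\{v_{p},\log_{p}\}$ of $\mathrm{Hom}_{\mathrm{cont}}(\Q_{p}^{\ast},\Q_{p})$ must be taken compatibly with the local reciprocity normalisation of Section \ref{derbigdualsec}, under which $\mathrm{rec}_{p}(p^{-1})$ is an arithmetic Frobenius. Once these conventions are aligned, the algebra is short and direct.
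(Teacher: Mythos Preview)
Your proof is correct and follows essentially the same approach as the paper's own proof: reduce to Proposition~\ref{dercol} via $p^{-}$, identify the image of $p^{-}$ in $\mathrm{Hom}_{\mathrm{cont}}(\Q_{p}^{\ast},\Q_{p})$ as the line annihilating $q_{A}$, and use that $\exp_{\Q_{p}}^{\ast}$ kills $\mathrm{ord}_{p}$ and sends $\log_{p}$ to $1$. The only cosmetic difference is that the paper parametrises this line directly by $\log_{q_{A}}=\log_{p}-\mathscr{L}_{p}(A)\cdot{}\mathrm{ord}_{p}$, writing $p^{-}(z_{0})=\alpha\cdot{}\log_{q_{A}}$ and computing $\log_{q_{A}}(p^{-1})=\mathscr{L}_{p}(A)$, whereas you expand in the basis $\{v_{p},\log_{p}\}$ and solve the constraint; the two computations are the same up to this change of coordinates.
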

\begin{proof} Consider the exact sequence\vspace{-2mm}
$$H^{1}(\Q_{p},\Q_{p}(1))\lfre{i^{+}} H^{1}(\Q_{p},V_{p}(A))\lfre{p^{-}} \mathrm{Hom}_{\mathrm{cont}}(\Q_{p}^{\ast},\Q_{p})\lfre{\delta}
H^{2}(\Q_{p},\Q_{p}(1))\stackrel{\mathrm{inv}_{p}}{\cong}\Q_{p}$$
arising from the  exact sequence $(\ref{eq:tate exact})$, where $\mathrm{inv}_{p}$
is the invariant map of local class field theory. 
A direct computation shows that $\delta(\cdot)=\mathrm{inv}_{p}\big(\cdot\cup{}q_{A}\widehat{\otimes}1\big)$,
where $\cup : H^{1}(\Q_{p},\Q_{p})\times{}H^{1}(\Q_{p},\Q_{p}(1))\fre{}H^{2}(\Q_{p},\Q_{p}(1))$
is the natural cup-product pairing and we identify as above $H^{1}(\Q_{p},\Q_{p}(1))=\Q_{p}^{\ast}\widehat{\otimes}\Q_{p}$. 
It then follows by local class field theory \cite{Ser} that  $\delta(\phi)=-\phi(q_{A})$ for every 
$\phi\in{}\mathrm{Hom}_{\mathrm{cont}}(\Q_{p}^{\ast},\Q_{p})$, so that
the image of $p^{-}$ is equal to the space of  morphisms $\phi$ such that $\phi(q_{A})=0$.
As $\log_{p}$ and $\mathrm{ord}_{p}$ form a $\Q_{p}$-basis of  
$\mathrm{Hom}_{\mathrm{cont}}(\Q_{p}^{\ast},\Q_{p})$, this implies 
\[
            \mathrm{Im}\lri{p^{-}}=\Q_{p}\cdot{}\log_{q_{A}},
\]
where $\log_{q_{A}}=\log_{p}-\mathscr{L}_{p}(A)\cdot{}\mathrm{ord}_{p}$ is the branch of the $p$-adic logarithm 
which vanishes  on $q_{A}\in{}p\Z_{p}$. 

Let  $z=(z_{n})\in{}H^{1}_{\mathrm{Iw}}(\Q_{p},T_{p}(A))$, and write 
$p^{-}(z_{0})=\alpha\cdot{}\log_{q_{A}}\in{}\mathrm{Hom}_{\mathrm{cont}}(\Q_{p}^{\ast},\Q_{p})$,
for some  $\alpha\in{}\Q_{p}$.  Then  $\exp_{A}^{\ast}(z_{0})=\exp^{\ast}(\alpha{}\cdot{}\log_{q_{A}})=\alpha$,
where 
$\exp^{\ast}=\exp^{\ast}_{0}$ is the Bloch--Kato dual exponential for $\Q_{p}$. 
Indeed, by its very  definition 
(see Chapter II of \cite{Katiw}), $\exp^{\ast}(\log_{p})=1$ and $\exp^{\ast}(\mathrm{ord}_{p})=0$. According  Proposition $\ref{dercol}$
\[
                    l_{\varsigma}\cdot{}\mathcal{L}_{A}^{\prime}(z)=\alpha{}\log_{q_{A}}(p^{-1})\cdot{}\{\varsigma\}
                    =\mathscr{L}_{p}(A)\cdot{}\exp_{A}^{\ast}(z_{0})\cdot{}\{\varsigma\}.
\]
The last assertion in the statement  follows from the non-vanishing of the $\mathscr{L}$-invariant 
\cite{M-man} and  the fact that the finite part  $H^{1}_{f}(\Q_{p},V_{p}(A))\cong{}A(\Q_{p})\widehat{\otimes}\Q_{p}$
\cite{B-K}
of the local cohomology group $H^{1}(\Q_{p},V_{p}(A))$ is the kernel of the dual exponential.
Indeed, the preceding discussion shows that
an element of $H^{1}(\Q_{p},V_{p}(A))$ belongs to the kernel of $\exp^{\ast}_{A}$ if and only if 
it is in the image of $i^{+} : H^{1}(\Q_{p},\Q_{p}(1))\fre{}H^{1}(\Q_{p},V_{p}(A))$,
and the latter equals $H^{1}_{f}(\Q_{p},V_{p}(A))$,
as follows easily from Kummer theory and the surjectivity of the  Tate parametrisation $(\ref{eq:Tate iso})$.
\end{proof}

\begin{proof}[Proof of Proposition $\ref{dercol}$] 
For every $n\in{}\N$, let $\pi_{n}:=\mathrm{Norm}_{\Q_{p}(\mu_{p^{n+1}})/\Q_{p,n}}(\zeta_{p^{n+1}}-1)$;
this is a uniformiser of $\Z_{p,n}$. 
Since $\Q_{p,n}^{\ast}$ has no non-trivial $p$-torsion, one has a decomposition 
\[
             H^{1}(\Q_{p,n},\Z_{p}(1))=\Q_{p,n}^{\ast}\widehat{\otimes}\Z_{p}=\widehat{\pi_{n}}\oplus{}1+\mathfrak{m}_{n},
\]
where  $\widehat{\pi_{n}}$ is the $p$-adic completion of $\pi_{n}^{\Z}$. 
Given $\alpha_{n}\in{}H^{1}(\Q_{p,n},\Z_{p}(1))$, let $\kappa_{n}(\alpha_{n})\in{}1+\mathfrak{m}_{n}$
be its projection to principal units, and $\mathrm{ord}_{n}(\alpha_{n})\in{}\Z_{p}$  its $\pi_{n}$-adic  valuation.  
Since $N_{m,n}(\pi_{m})=\pi_{n}$ for every integers $m\geq{}n$,
if $\alpha=(\alpha_{n})\in{}H^{1}_{\mathrm{Iw}}(\Q_{p,\infty},\Z_{p}(1))$
then $\mathrm{ord}(\alpha):=\mathrm{ord}_{n}(\alpha_{n})$ is independent of $n\in{}\N$, and  $\kappa(\alpha):=(\kappa_{n}(\alpha_{n}))_{n\in{}\N}$
is a compatible sequence with respect to the norm maps.
One can  then define maps
\[
          \mathrm{ord} : H^{1}_{\mathrm{Iw}}(\Q_{p,\infty},\Z_{p}(1))\fre{}\Z_{p};\ \ \kappa : H^{1}_{\mathrm{Iw}}(\Q_{p,\infty},\Z_{p}(1))
          \fre{}U_{\infty}^{1},
\]
where $U_{\infty}^{1}$ denotes the inverse limit of the groups  $1+\mathfrak{m}_{n}$. 
Write $\pi_{\infty}:=(\pi_{n})\in{}H^{1}_{\mathrm{Iw}}(\Q_{p,\infty},\Z_{p}(1))$.
By construction
$\alpha=\pi_{\infty}^{\mathrm{ord}(\alpha)}+\kappa(\alpha)$
for every $\alpha=(\alpha_{n})\in{}H^{1}_{\mathrm{Iw}}(\Q_{p,\infty},\Z_{p}(1))$.
Moreover, one has 
\begin{equation}\label{eq:buster 1}
             \alpha_{0}=p^{\mathrm{ord}(\alpha)}\in{}H^{1}(\Q_{p},\Z_{p}(1)).
\end{equation}
Indeed, local class field theory tells us that the image of the injective map 
$H^{1}_{\mathrm{Iw}}(\Q_{p,\infty},\Z_{p}(1))/\varsigma\hookrightarrow{}H^{1}(\Q_{p},\Z_{p}(1))$
induced by the corestriction 
equals $\widehat{p}=\widehat{\pi_{0}}$. 
Then $U_{\infty}^{1}\subset\varsigma\cdot{}H^{1}_{\mathrm{Iw}}(\Q_{p,\infty},\Z_{p}(1))$
and equation $(\ref{eq:buster 1})$
follows.

Let us now consider the element $\mathfrak{C}^{\prime}=\mathfrak{C}^{\prime}_{\varsigma}$
appearing in Lemma $\ref{der col class}$. For every $z=(z_{n})\in{}H^{1}_{\mathrm{Iw}}(\Q_{p,\infty},\Z_{p})$
\begin{equation}\label{eq:serveder}
         \mathcal{L}_{A}^{\prime}(z)=z_{0}(p^{-1})\cdot{}\mathrm{ord}(\mathfrak{C}^{\prime})\cdot{}\{\varsigma\}.
\end{equation}
Indeed, let $\dia{-,-} : H^{1}(\Q_{p},\Z_{p})\times{}H^{1}(\Q_{p},\Z_{p}(1))\fre{}\Z_{p}$ 
be the local Tate pairing. Then $\dia{z_{0},\mathfrak{C}^{\prime}_{0}}=\varepsilon\lri{\dia{z,\mathfrak{C}^{\prime}}_{\infty}},$
where  $\varepsilon$ is the augmentation map and we write $\mathfrak{C}^{\prime}=
(\mathfrak{C}^{\prime}_{n})$.
This implies
\begin{equation}\label{eq:auxxx}
    \mathcal{L}_{A}^{\prime}(z)=-\{\dia{z,\varsigma\cdot{}\mathfrak{C}^{\prime}}_{\infty}\}=\dia{z_{0},\mathfrak{C}^{\prime}_{0}}\cdot{}\{\varsigma\}.
\end{equation}
(Note that $\iota(\varsigma)\equiv{}-\varsigma\ \mathrm{mod}\ I^{2}$.)
Since $\dia{z_{0},x}=z_{0}(x^{-1})$
for every $x\in{}\Q_{p}^{\ast}\widehat{\otimes}\Z_{p}$ by local class field theory \cite{Ser},
equation $(\ref{eq:serveder})$ follows  by combining equations $(\ref{eq:auxxx})$ and $(\ref{eq:buster 1})$.

Thanks to $(\ref{eq:serveder})$, the proposition will follow once we prove  the claim
\begin{equation}\label{eq:finclaimcolder} 
                     \mathrm{ord}(\mathfrak{C}^{\prime})=l_{\varsigma}^{-1}\in{}\Z_{p}^{\ast}.
\end{equation}  
Write $V_{\infty}$ for the inverse limit of the  groups $\Z_{p}[\zeta_{p^{m+1}}]^{\ast}$, for $m\in{}\N$.
According to Theorem A of \cite{Col}, for every $v=(v_{n})\in{}V_{\infty}$ there exists a unique power series 
$f_{v}(T)\in{}\Z_{p}\llbracket{}T\rrbracket^{\ast}$
such that  $f_{v}(\zeta_{p^{n+1}}-1)=v_{n}$
for every $n\in{}\N$. 
The association $v\mapsto{}f_{v}(T)$ is a morphism of $\Z_{p}\llbracket{}\mathrm{Gal}(\Q_{p}(\mu_{p^{\infty}})/\Q_{p})\rrbracket$-modules (see \cite{Col} for details).
Note that, with the notations of Lemma $\ref{co2}$,  $g(T)=f_{\mathfrak{C}}(T)$.
As $\mathfrak{C}=\varsigma\cdot{}\mathfrak{C}^{\prime}$
and $\mathfrak{C}^{\prime}=\kappa(\mathfrak{C}^{\prime})+\pi_{\infty}^{\mathrm{ord}(\mathfrak{C}^{\prime})}$,
one then finds
\[
               g(T)=
                     \frac{f_{\kappa(\mathfrak{C}^{\prime})}\big((1+T)^{\chi_{\mathrm{cyc}}(\sigma_{0})}-1\big)}{f_{\kappa(\mathfrak{C}^{\prime})}(T)}
                     \cdot{}\lri{\prod_{\mu\in{}\mu_{p-1}}\frac{(1+T)^{\mu\cdot{}\chi_{\mathrm{cyc}}(\sigma_{0})}-1}
                     {(1+T)^{\mu}-1}}^{\mathrm{ord}(\mathfrak{C}^{\prime})}.
\]
Evaluating this equality at $T=0$ and then applying the $p$-adic  logarithm, we easily obtain
\[
                \log_{p}\big(g(0)\big)=(p-1)\cdot{}{}\mathrm{ord}(\mathfrak{C}^{\prime}){}\cdot{}\log_{p}(\chi_{\mathrm{cyc}}(\sigma_{0}))=p{}\cdot{}
                \mathrm{ord}(\mathfrak{C}^{\prime}){}\cdot{}l_{\varsigma}.
\]
Since $\log_{p}\big(g(0)\big)=p$ by Lemma $\ref{co2}(1)$, the claim $(\ref{eq:finclaimcolder})$ follows.
\end{proof}

\subsection{The improved big dual exponential} The aim of this section is to construct 
an \emph{improved big dual exponential} $\mathcal{L}_{\T}^{\ast} : H^{1}(\Q_{p},\T^{-})\fre{}R[1/p]$.
To do this we follow the techniques of \cite[Section 5]{Ochiaicol}.

\begin{proposition}\label{improved big dual} There exists a unique morphism of $R$-modules
\[
            \mathcal{L}_{\T}^{\ast}=\mathcal{L}_{\T,\mho}^{\ast} : H^{1}(\Q_{p},\T^{-})\longrightarrow{}R\otimes_{\Z_{p}}\Q_{p}
\]
such that: for every $\mathfrak{Z}\in{}H^{1}(\Q_{p},\T^{-})$ and every   $\nu\in{}\xari(R)$ 
\[
              \nu\big(\mathcal{L}_{\T}^{\ast}(\mathfrak{Z})\big)=\lri{1-\frac{\nu(\mathbf{a}_{p})}{p}}^{-1}
              \big\langle\exp^{\ast}(\mathfrak{Z}_{\nu}),\mho_{\nu}(1)
              \big\rangle_{\mathrm{dR}},
\]
where $\exp^{\ast} : H^{1}(\Q_{p},V_{\nu}^{-})\fre{}D_{\mathrm{dR}}(V_{\nu}^{-})=\mathrm{Fil}^{0}
D_{\mathrm{dR}}(V_{\nu})$ is the Bloch--Kato dual 
exponential map.
\end{proposition}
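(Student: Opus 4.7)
The plan is to adapt the construction in \cite[Section 5]{Ochiaicol} of the two-variable big dual exponential, simplified to a single variable. The essential point to exploit is that $\T^{-}$ is unramified and free of rank one over $R$, with $G_p/I_p$ acting through $\widetilde{\mathbf{a}}_p$. This allows a direct Fontaine-theoretic description of $H^{1}(\Q_p,\T^{-})\otimes_{\Z_p}\Q_p$ via the crystalline module of $\T^{-}\otimes_{\Z_p}\Q_p$, which in turn is canonically identified, after inverting $p$, with $\mathcal{D}\otimes_{\Z_p}\Q_p$ through the definition of $\mathcal{D}$ as an unramified cohomology group attached to $\check{\T}^{+}$.

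First I would define $\mathcal{L}_{\T}^{\ast}$ by composing the Fontaine-theoretic dual exponential
\[
H^{1}(\Q_p,\T^{-}\otimes\Q_p)\lfre{}D_{\mathrm{crys}}(\T^{-}\otimes\Q_p)/(1-\varphi/p)
\]
with the pairing against the fixed generator $\mho$, using the isomorphism $D_{\mathrm{dR}}(\check{V}_{\nu}^{+}(1))\cong\mathrm{tg}(\check{V}_{\nu}(1))$ from $(\ref{eq:tangent and fil})$ to absorb the Tate twist encoded by $\mho_{\nu}(1)$. The factor $(1-\mathbf{a}_p/p)^{-1}$ in the interpolation formula arises precisely as the inverse of the eigenvalue of $1-\varphi/p$ on the rank-one module $D_{\mathrm{crys}}(\T^{-})\otimes\Q_p$; since $\mathbf{a}_p\in R^{\ast}$ while $p$ has positive valuation in $R$, the element $1-\mathbf{a}_p/p$ is invertible in $R\otimes_{\Z_p}\Q_p$, so the construction lands in $R\otimes_{\Z_p}\Q_p$ as required.

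Uniqueness of $\mathcal{L}_{\T}^{\ast}$ is immediate from Zariski-density of the arithmetic points in $\mathrm{Spec}(R\otimes\Q_p)$: any two $R$-linear maps with the stated interpolation agree on a dense set, hence coincide. For existence, one verifies the interpolation formula at each $\nu\in\xari(R)$, which reduces to the compatibility between the Fontaine-theoretic and Bloch--Kato dual exponentials for the crystalline representation $V_{\nu}^{-}$, together with the normalization of $\mho_{\nu}(1)$ under $\nu$-specialization described in Section $\ref{beka0}$.

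The main obstacle is ensuring that the construction varies $p$-adically analytically in $\nu$, i.e.\ that it genuinely defines an $R$-linear morphism out of $H^{1}(\Q_p,\T^{-})$ and not merely a compatible family of $K_{\nu}$-linear specializations. This uses two inputs from \cite{Ochiaicol}: that $\mathcal{D}$ is a free $R$-module of rank one \cite[Lemma 3.3]{Ochiaicol}, providing a single generator $\mho$ that specializes uniformly; and that the crystalline comparison is compatible in $p$-adic families of ordinary representations, which is the technical heart of the construction of $\mathcal{L}_{\T}$ itself and which carries over verbatim to the present simpler setting, since we no longer need to interpolate over the cyclotomic tower.
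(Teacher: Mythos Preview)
Your overall strategy---build a big dual exponential directly on $\T^{-}$ and then pair with $\mho$---is different from the paper's, and in the form you sketch it has a genuine gap.

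The paper does not attempt to define a Fontaine-theoretic $\exp^{\ast}$ for the $R$-representation $\T^{-}$. Instead it works on the Kummer-dual side: since $\check{\T}^{+}$ is unramified and free of rank one, one has an explicit morphism $\exp_{p}\widehat{\otimes}\mathrm{id} : \widehat{\Z}_{p}^{\mathrm{un}}\widehat{\otimes}\check{\T}^{+}\to H^{1}(I_{p},\check{\T}^{+}(1))\otimes\Q_{p}$, and taking $G_{p}^{\mathrm{un}}$-invariants produces a map $\exp_{\T}:\mathcal{D}\to H^{1}(\Q_{p},\check{\T}^{+}(1))\otimes\Q_{p}$ which specializes at every arithmetic point $\nu$ to the Bloch--Kato exponential on $D_{\mathrm{dR}}(\check{V}_{\nu}^{+}(1))$. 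One then \emph{defines} $\exp_{\T}^{\ast}(\mathfrak{Z}):=\langle\mathfrak{Z},\exp_{\T}(\mho)\rangle_{R}$ via the $R$-adic Tate pairing, and the interpolation property follows from Kato's adjunction $\langle x,\exp(y)\rangle=\langle\exp^{\ast}(x),y\rangle_{\mathrm{dR}}$. Finally $\mathcal{L}_{\T}^{\ast}:=(1-\mathbf{a}_{p}/p)^{-1}\exp_{\T}^{\ast}$, with the Euler factor simply multiplied in by hand.

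Your proposal instead posits a map $H^{1}(\Q_{p},\T^{-}\otimes\Q_{p})\to D_{\mathrm{crys}}(\T^{-}\otimes\Q_{p})/(1-\varphi/p)$. Two problems arise. First, $\T^{-}\otimes_{\Z_{p}}\Q_{p}$ is infinite-dimensional over $\Q_{p}$ (it is free of rank one over $R[1/p]$, and $R$ is an Iwasawa algebra), so ``$D_{\mathrm{crys}}$'' for it is not covered by the usual Fontaine formalism; you would need to supply a definition---essentially $H^{0}(\Q_{p},\widehat{\Z}_{p}^{\mathrm{un}}\widehat{\otimes}\T^{-})$, the analogue of $\mathcal{D}$---and check its properties, which you do not do. Second, even at a single arithmetic point the Bloch--Kato dual exponential lands in $\mathrm{Fil}^{0}D_{\mathrm{dR}}=D_{\mathrm{crys}}$, not in a quotient by $1-\varphi/p$; there is no standard map of the shape you write. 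In the paper the factor $(1-\mathbf{a}_{p}/p)^{-1}$ is not an eigenvalue of any operator in the construction but is inserted at the end to match the Euler factor appearing in $\mathcal{L}_{\T}$ (cf.\ Proposition~\ref{modifiedochiai} with $\chi$ trivial). So your explanation of where this factor comes from is not correct as stated, and the ``carries over verbatim'' clause hides exactly the construction that needs to be supplied. Your remark on uniqueness via density of arithmetic points is fine and is what the paper uses as well.
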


Before giving the proof of  Proposition $\ref{improved big dual}$, we note the following corollary (cf. Section $\ref{improved integral}$).

\begin{cor}\label{improved L} Let $\varepsilon : \overline{R}\twoheadrightarrow{}R$ be the augmentation map, and let 
 $\mathfrak{Z}=(\mathfrak{Z}_{n})\in{}H^{1}_{\mathrm{Iw}}(\Q_{p,\infty},\T^{-})$. Then
\[
               \varepsilon\big(\mathcal{L}_{\T}(\mathfrak{Z})\big)=\lri{1-\mathbf{a}_{p}^{-1}}\cdot{}\mathcal{L}_{\T}^{\ast}(\mathfrak{Z}_{0}).
\]
\end{cor}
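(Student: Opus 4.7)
The plan is to prove the equality by specialising at every weight-two arithmetic point $\nu\in\xari(R)$ and then invoking Zariski density. Both sides of the claimed identity take values in $R\otimes_{\Z_p}\Q_p$ (recall $\mathbf{a}_p\in R^\ast$ by $p$-ordinarity); since $R$ is a noetherian domain finite over $\iw$, and the kernels of the weight-two arithmetic specialisations of $R$ arise as extensions of the (dense collection of) height-one primes $(\gamma_0-\chi(\gamma_0))$ of $\iw$ as $\chi$ varies over finite-order characters of $\Gamma$, the intersection of these kernels is zero. Consequently, it suffices to verify that
\[
   \nu\bigl(\varepsilon(\mathcal{L}_{\T}(\mathfrak{Z}))\bigr) \;=\; \nu\bigl((1-\mathbf{a}_p^{-1})\,\mathcal{L}_{\T}^{\ast}(\mathfrak{Z}_0)\bigr)
\]
for every $\nu\in\xari(R)$ of weight two.

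The two sides are now computed directly from the two preceding interpolation formulae. Observe that the augmentation $\varepsilon:\overline{R}\twoheadrightarrow R$ corresponds to evaluation at the trivial character $\mathbf{1}$ of $G_\infty$, so $\nu\circ\varepsilon=\nu\times\mathbf{1}$. Applying Proposition \ref{modifiedochiai} at $(\nu,\mathbf{1})$ with $n=0$ — in which case $\mathrm{Gal}(\Q_{p,0}/\Q_p)$ is trivial, the conductor exponent is $m=0$, and $\tau(\mathbf{1})=1$ — gives
\[
   \nu\bigl(\varepsilon(\mathcal{L}_{\T}(\mathfrak{Z}))\bigr) \;=\; \Bigl(1-\tfrac{\nu(\mathbf{a}_p)}{p}\Bigr)^{-1}\bigl(1-\nu(\mathbf{a}_p)^{-1}\bigr)\,\bigl\langle \exp^{\ast}(\mathfrak{Z}_{0,\nu}),\mho_\nu(1)\bigr\rangle_{\mathrm{dR}}.
\]
On the other hand, Proposition \ref{improved big dual} yields
\[
   \nu\bigl(\mathcal{L}_{\T}^{\ast}(\mathfrak{Z}_0)\bigr) \;=\; \Bigl(1-\tfrac{\nu(\mathbf{a}_p)}{p}\Bigr)^{-1}\,\bigl\langle \exp^{\ast}(\mathfrak{Z}_{0,\nu}),\mho_\nu(1)\bigr\rangle_{\mathrm{dR}},
\]
and multiplying by $\nu(1-\mathbf{a}_p^{-1})=1-\nu(\mathbf{a}_p)^{-1}$ produces exactly the previous expression. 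The two specialisations therefore agree for every weight-two $\nu$, and Zariski density concludes the argument.

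I do not anticipate a serious obstacle: the proof is essentially a bookkeeping exercise, comparing the Euler-type fudge factor $\mathcal{E}(\nu,\mathbf{1})$ of Ochiai's formula with the single factor $(1-\nu(\mathbf{a}_p)/p)^{-1}$ built into the improved dual exponential. The only delicate point is the standard density statement for weight-two arithmetic points in $\mathrm{Spec}(R\otimes\Q_p)$, which follows from the structure of $R$ as a finite flat $\iw$-algebra and is routine in Hida theory; and a minor convention check that the Gauss sum $\tau(\mathbf{1})$ of the trivial character equals $1$ in the normalisation used in Proposition \ref{modifiedochiai}.
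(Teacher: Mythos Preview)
Your proof is correct and follows essentially the same approach as the paper: specialise both sides at weight-two arithmetic points using Proposition~\ref{modifiedochiai} (with $\chi=\mathbf{1}$) and Proposition~\ref{improved big dual}, compare the resulting Euler factors, and conclude by Zariski density of these points in $\mathrm{Spec}(R)$. The paper's version is simply more terse.
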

\begin{proof} Taking $\chi$ as the trivial character of $G_{\infty}$ in Proposition  $\ref{modifiedochiai}$, one has
\[
         \nu\circ{}\varepsilon\big(\mathcal{L}_{\T}(\mathfrak{Z})\big)=
         \lri{1-\nu(\mathbf{a}_{p})^{-1}}\cdot{}\nu\big(\mathcal{L}_{\T}^{\ast}(\mathfrak{Z}_{0})\big),
\]
for every weight-two arithmetic point $\nu\in{}\xari(R)$. Since such points (or better their kernels) form a dense subset of $\mathrm{Spec}(R)$,
the corollary follows. 
\end{proof}

\begin{proof}[Proof of Proposition $\ref{improved big dual}$] Let $K$ be a complete subfield of $\widehat{\Q}_{p}^{\mathrm{un}}$ and let $V$ be a $p$-adic representation of $G_{K}$.
Denote by 
$D_{\mathrm{dR},K}(V):=H^{0}(K,V\otimes_{\Q_{p}}B_{\mathrm{dR}})$, and 
by  $\exp : D_{\mathrm{dR},K}(V)\fre{}H^{1}(K,V)$  the Bloch--Kato exponential map \cite{B-K}.
For $V=\Q_{p}(1)$, it is described by the composition\vspace{-1mm}
\[
             \exp_{p} : D_{\mathrm{dR},K}(\Q_{p}(1))=K\lfre{}
                K^{\ast}\widehat{\otimes}\Q_{p}=H^{1}(K,\Q_{p}(1)),
\]
where the first equality  refers to the canonical identification $D_{\mathrm{dR},K}(\Q_{p}(1))=K\cdot{}\zeta_{\mathrm{dR}}\cong{}K$
(see Section $\ref{beka0}$),
the  arrow is given by the usual $p$-adic exponential and the last equality is the Kummer isomorphism. 
As $K$ is unramified, $\exp_{p}$ maps the ring of integers of $K$ into $\frac{1}{p}H^{1}(K,\Z_{p}(1))\subset{}H^{1}(K,\Q_{p}(1))$.

Set $G_{p}:=G_{\Q_{p}}$, $I_{p}:=I_{\Q_{p}}$ and $G_{p}^{\mathrm{un}}:=G_{p}/I_{p}$. 
With the notations of Section $\ref{expbk}$, consider the morphism of
$R[G_{p}^{\mathrm{un}}]$-modules\vspace{-1mm} 
\begin{equation}\label{eq:an arrow}
         \exp_{p}\widehat{\otimes}\mathrm{id} :  \widehat{\Z}_{p}^{\mathrm{un}}\widehat{\otimes}_{\Z_{p}}\check{\T}^{+}
           \fre{}
           \Big(H^{1}(I_{p},\Z_{p}(1))\widehat{\otimes}_{\Z_{p}}\check{\T}^{+}\Big)\otimes_{\Z_{p}}\Q_{p}
           =H^{1}\big(I_{p},\check{\T}^{+}(1)\big)\otimes_{\Z_{p}}\Q_{p}\vspace{-1mm}
\end{equation}
(recall that  $\check{\T}^{+}$ is unramified).  
As $H^{0}(I_{p},\check{\T}^{+}(1))=0$, restriction gives
an isomorphism between $H^{1}(\Q_{p},\check{\T}^{+}(1))$ and 
$H^{0}\big(G_{p}^{\mathrm{un}},H^{1}\big(I_{p},\check{\T}^{+}(1)\big)\big)$.
Taking $G_{p}^{\mathrm{un}}$-invariants in $(\ref{eq:an arrow})$  then yields a morphism of $R$-modules
\[
                 \mathrm{exp}_{\T} : \mathcal{D}\lfre{}H^{1}(\Q_{p},\check{\T}^{+}(1))\otimes_{\Z_{p}}\Q_{p}.
\]
We claim that for every arithmetic point  $\nu\in{}\xari(R)$ 
\begin{equation}\label{eq:claim improved exp}
               \nu_{\ast}\big(\mathrm{exp}_{\T}(\mho)\big)=\exp\big(\mho_{\nu}(1)\big),
\end{equation}
where  $\nu_{\ast} : H^{1}(\Q_{p},\check{\T}^{+}(1))\fre{}H^{1}(\Q_{p},\check{V}_{\nu}^{+}(1))$ is the  morphism induced 
by $\check{\T}^{+}\twoheadrightarrow{}\check{\T}^{+}_{\nu}\subset{}\check{V}_{\nu}^{+}$, and 
$\mathrm{exp}$ is the exponential on $D_{\mathrm{dR}}(\check{V}_{\nu}^{+}(1))$.
As above,
the restriction map gives an isomorphism between $H^{1}(\Q_{p},\check{V}_{\nu}^{+}(1))$
and the $G_{p}^{\mathrm{un}}$-invariants of $H^{1}(I_{p},\check{V}_{\nu}^{+}(1))$.
It follows that the exponential 
$\exp : D_{\mathrm{dR}}(\check{V}^{+}_{\nu}(1))\fre{}H^{1}(\Q_{p},\check{V}^{+}_{\nu}(1))$ 
is identified with the restriction of
\[
\exp_{p}\otimes{}\mathrm{id} :              \widehat{\Q}_{p}^{\mathrm{un}}\otimes_{\Q_{p}}\check{V}_{\nu}^{+}\lfre{}
              H^{1}(I_{p},\Q_{p}(1))\otimes_{\Q_{p}}\check{V}_{\nu}^{+}
              =H^{1}(I_{p},\check{V}_{\nu}^{+}(1))
\]
to the $G_{p}^{\mathrm{un}}$-invariants.
Equation $(\ref{eq:claim improved exp})$ then follows from the  definitions of $\mathrm{exp}_{\T}$ and $\mho_{\nu}(1)$.

Let $\dia{-,-}_{R} : H^{1}(\Q_{p},\T^{-})\otimes_{R}H^{1}(\Q_{p},\check{\T}^{+}(1))\fre{}R$ 
be the $R$-adic local  Tate pairing and define
\[
                      \mathrm{exp}_{\T}^{\ast}=\mathrm{exp}_{\T,\mho}^{\ast}:=
                      \big\langle\cdot{},\mathrm{exp}_{\T}(\mho)\big\rangle_{R} : H^{1}(\Q_{p},\T^{-})\longrightarrow{}R
                      \otimes_{\Z_{p}}\Q_{p}.
\]   
By  $(\ref{eq:claim improved exp})$ one obtains: for every $\mathfrak{Z}\in{}H^{1}(\Q_{p},\T^{-})$ and every $\nu\in{}\xari(R)$  
\begin{equation}\label{eq:final improved}
               \nu\big(\mathrm{exp}_{\T}^{\ast}(\mathfrak{Z})\big)=
               \dia{\mathfrak{Z}_{\nu},\nu_{\ast}\big(\exp_{\T}(\mho)\big)}_{\nu}
         =
               \big\langle\mathfrak{Z}_{\nu},\exp\big(\mho_{\nu}(1)\big)\big\rangle_{\nu}
               =\big\langle\exp^{\ast}(\mathfrak{Z}_{\nu}),\mho_{\nu}(1)\big\rangle_{\mathrm{dR}}.
\end{equation}
Here $\dia{-,-}_{\nu} : H^{1}(\Q_{p},V_{\nu}^{-})\times{}H^{1}(\Q_{p},\check{V}^{+}_{\nu}(1))\fre{}K_{\nu}$
is the local Tate pairing and 
$\exp^{\ast}$ is the Bloch--Kato dual exponential map on $H^{1}(\Q_{p},V_{\nu}^{-})$;
the first equality follows from the functoriality of the 
local Tate duality,
while the  last  equality  is \cite[Chapter II, Theorem 1.4.1]{Katiw}.
Define
\[
                \mathcal{L}_{\T}^{\ast}:=\lri{1-\frac{\mathbf{a}_{p}}{p}}^{-1}\mathrm{exp}_{\T}^{\ast} : H^{1}(\Q_{p},\T^{-})\lfre{}R\otimes_{\Z_{p}}\Q_{p}.
\]
According to  $(\ref{eq:final improved})$, the morphism $\mathcal{L}_{\T}^{\ast}$
satisfies the desired interpolation property, which characterises it uniquely (as the kernels of the arithmetic points are 
dense in $\mathrm{Spec}(R)$).   
\end{proof}

\subsection{Proof of Theorem $\ref{mainderalg}$}\label{proofmainderalg}
Write $\mathscr{R}$  for the localisation of $\overline{R}$ at $\overline{\mathfrak{p}}$,
and  $\mathscr{P}$ for  its maximal ideal.  Then $\mathscr{P}=(\varpi,\varsigma)\cdot{}\mathscr{R}$,
where $\varpi=\gamma_{0}-1$ (resp., $\varsigma=\sigma_{0}-1$) is the generator of $\mathfrak{p}R_{\mathfrak{p}}$ 
(resp., $I$)
fixed in $(\ref{eq:uniforler})$ (resp., Section $\ref{fix varsigma}$).
Moreover 
the $\Q_{p}$-module $\mathscr{P}/\mathscr{P}^{2}$ is isomorphic to 
$\big(I/I^{2}\otimes{}_{\Z_{p}}\Q_{p}\big)\oplus{}
\big(\mathfrak{p}R_{\mathfrak{p}}/\mathfrak{p}^{2}R_{\mathfrak{p}}\big)$.

Let $\mathfrak{Z}=(\mathfrak{Z}_{n})\in{}H^{1}_{\mathrm{Iw}}(\Q_{p},\T^{-})$
and $\mathfrak{z}:=\mathfrak{Z}_{0,\psi}\in{}\mathrm{Hom}_{\mathrm{cont}}(\Q_{p}^{\ast},\Q_{p})$.
According to Theorem 3.18 of \cite{G-S}
\[
                     1-\mathbf{a}_{p}^{-1}\equiv{}-\frac{\mathscr{L}_{p}(A)}{2\log_{p}(\varpi)}\cdot{}\varpi\  \ 
                     (\mathrm{mod}\ \mathfrak{p}^{2}R_{\mathfrak{p}}),
\] 
where $\log_{p}(\varpi):=\log_{p}(\gamma_{0})$.
Corollary $\ref{Ochiai vs Coleman}$ and Corollary $\ref{improved L}$ then yield the equality in $\mathscr{P}/\mathscr{P}^{2}$:
\[
          \mathcal{L}_{\T}(\mathfrak{Z})\ \ \mathrm{mod}\ \mathscr{P}^{2}
          =\mathcal{L}_{A}^{\prime}\big(\mathfrak{Z}_{\psi}\big)
          -\frac{\mathscr{L}_{p}(A)}{2\log_{p}(\varpi)}\cdot{}
          \psi\big(\mathcal{L}_{\T}^{\ast}(\mathfrak{Z}_{0})\big)\cdot{}\{\varpi\},
\]
where as usual $\{\cdot{}\} : \mathscr{P}\twoheadrightarrow{}\mathscr{P}/\mathscr{P}^{2}$ denotes the projection.
Thanks to Proposition $\ref{dercol}$ and Proposition $\ref{improved big dual}$,  the last congruence can be rewritten  as
\[
           \lri{1-p^{-1}}\cdot{}\mathcal{L}_{\T}(\mathfrak{Z})\ \ 
           \mathrm{mod}\ \mathscr{P}^{2}=
           \frac{\mathfrak{z}(p^{-1})}{\log_{p}(\varsigma)}\cdot{}\{\varsigma\}-\frac{\mathscr{L}_{p}(A)}
           {2\log_{p}(\varpi)}\cdot{}\mathfrak{z}(e(1))\cdot{}\{\varpi\}.
\]
Here we  used that $\psi(\mathbf{a}_{p})=a_{p}(A)=1$ and the equality $\dia{\exp^{\ast}(\mathfrak{z}),\mho_{\psi}(1)}_{\mathrm{dR}}
=\mathfrak{z}(e(1))$. The latter follows from the definition of 
$\exp^{\ast} : H^{1}(\Q_{p},\Q_{p})\fre{}D_\mathrm{dR}(\Q_{p})=\Q_{p}$
(see the proof of Corollary $\ref{derivative coleman}$) and our normalisation $(\ref{eq:norm mho})$
of $\mho_{\psi}(1)$. 
Applying $\Mm$ to both sides of the last equation, one obtains the formula  displayed in Part 1 of  Theorem $\ref{mainderalg}$.
(Strictly speaking, the Mellin transform is defined on $\overline{R}$, but it  
extends to a morphism $\Mm : \mathscr{R}\fre{}\mathscr{M}^{\mathrm{reg}}$, where $\mathscr{M}^{\mathrm{reg}}$
is the localisation of $\mathscr{A}$ at the multiplicative subset $\{g(k,s)\in{}\mathscr{A} : g(2,1)\not=0\}$.)

To prove Part 2 of the theorem, let $\mathfrak{Z}=(\mathfrak{Z}_{n})\in{}H^{1}_{\mathrm{Iw}}(\Q_{p,\infty},\T)$ and
let $\mathfrak{z}:=\mathfrak{Z}_{0,\psi}\in{}H^{1}(\Q_{p},V_{p}(A))$.
Since $\exp_{A}^{\ast}(\mathfrak{z})$ is equal to 
$p^{-}(\mathfrak{z})\big(e(1)\big)$,
using Corollary $\ref{derivative coleman}$
in place of Proposition $\ref{dercol}$, the same argument as  above yields 
\[
          (1-p^{-1})\cdot{}\Mm\circ{}\mathcal{L}_{\T}(\mathfrak{Z})\equiv{}
          \mathscr{L}_{p}(A)\cdot{}\exp_{A}^{\ast}(\mathfrak{z})\cdot{}(s-1)-\frac{1}{2}\mathscr{L}_{p}(A)\cdot{}
          \exp_{A}^{\ast}(\mathfrak{z})\cdot{}(k-2)\ \ \big(\mathrm{mod}\ 
          \mathscr{J}^{2}\big),
\]
thus concluding the proof of Theorem $\ref{mainderalg}$.

\section{Selmer complexes and the height-weight pairing}\label{pairing section}
Inspired by \neko's formalism of height pairings \cite[Section 11]{Ne},
we  define   the \emph{height-weight pairing} mentioned in the introduction.
We then summarise its main properties, referring to \cite{Ne} and \cite{Ven} for the proofs.

\subsection{Selmer complexes}\label{Selmer complexes} With the notations of Section $\ref{bekaochiai}$,
set $\gaun:=\gaun_{0}$.
Let $S$ be a complete, local Noetherian ring with finite residue field of 
characteristic $p$, and let $\mathscr{S}$ be a localisation of $S$.
Let $M=(M,M^{+})$ be an \emph{$\mathscr{S}$-adic, nearly-ordinary representation of $\gaun$}.
More precisely,  
$M=\mathbb{M}\otimes_{S}\mathscr{S}$ and $M^{+}=\mathbb{M}^{+}\otimes_{S}\mathscr{S}$,
where $\mathbb{M}$ is a finitely generated, free $S$-module, 
equipped with a continuous, $S$-linear action of $\gaun$,
and
$\mathbb{M}^{+}\subset{}\mathbb{M}$ is an $S$-direct summand of  $\mathbb{M}$,  
which is stable for the action of the decomposition group 
$G_{p}:=G_{\Q_{p}}\hookrightarrow{}G_{\Q}$
determined by the embedding $i_{p} : \overline{\Q}\hookrightarrow{}\overline{\Q}_{p}$.

For every prime $q|N$, fix an embedding $i_{q} : \overline{\Q}\hookrightarrow{}\overline{\Q}_{q}$,
and write $G_{q}:=G_{\Q_{q}}\hookrightarrow{}G_{\Q}$ for the corresponding decomposition group at $q$.
Following  \cite{Ne}, define  \emph{\neko's Selmer complex of $M$} as the complex of $S$-modules:
\[
             \scob(\gaun,M):=\mathrm{Cone}\lri{\ctsb(\gaun,M)\oplus{}\ctsb(\Q_{p},M^{+})
             \stackrel{\mathrm{res}_{Np}-i^{+}}{\longrightarrow{}}
             \bigoplus_{l|Np}\ctsb(\Q_{l},M)}[-1],
\] 
where the notations are as follows. 
For $G=\gaun$ or $G=G_{l}$ ($l|Np$),   $\ctsb(G,\star)$ is  the complex of continuous (non-homogeneous) cochains of $G$
with values in $\star$ and  $\ctsb(\Q_{l},\star):=\ctsb(G_{l},\star)$
(see Section 3 of \cite{Ne}).
$i^{+} : \ctsb(\Q_{p},M^{+})\fre{}\ctsb(\Q_{p},M)$
is the morphism induced by  $M^{+}\subset{}M$. Finally, for every prime $l|Np$,
$\mathrm{res}_{l} : \ctsb(\gaun,M)\fre{}\ctsb(\Q_{l},M)$
is the restriction morphism associated with the decomposition group $G_{l}\hookrightarrow{}G_{\Q}$
and  $\mathrm{res}_{Np}$ is the direct sum of the morphisms $\mathrm{res}_{l}$, for $l|Np$.

Denote by $\mathrm{D}(\mathscr{S})$  the derived category of complexes of $\mathscr{S}$-modules and by 
$\mathrm{D}(\mathscr{S})^{b}_{\mathrm{ft}}\subset{}\mathrm{D}(\mathscr{S})$ the subcategory 
of cohomologically bounded complexes  with cohomology of finite 
type over $\mathscr{S}$. Write
\[
            \derco(\Q,M)\in{}\mathrm{D}(\mathscr{S})_{\mathrm{ft}}^{b};\ \ \ \ 
            \exsel^{\ast}(\Q,M):=H^{\ast}\lri{\derco(\Q,M)}\vspace{-1mm}
\]
for the image of $\scob(\gaun,M)$ in $\mathrm{D}(\mathscr{S})_{\mathrm{ft}}^{b}$ and its cohomology respectively.

By construction, there is   an exact triangle in $\mathrm{D}(\mathscr{S})^{b}_{\mathrm{ft}}$ (cf. Section 6 of \cite{Ne}): 
\[
             \derco(\Q,M)\lfre{}\dercts(\gaun,M)\lfre{}\dercts(\Q_{p},M^{-})\oplus\bigoplus_{l|N}\dercts(\Q_{l},M),
             \vspace{-2mm}
\]
which gives rise to a long exact cohomology sequence of $\mathscr{S}$-modules
\begin{equation}\label{eq:long nekovar}
     \cdots{}\fre{}H^{q-1}(\Q_{p},M^{-})\oplus{}H^{q-1}_{N}(M)\fre{}
     \exsel^{q}(\Q,M)  \fre{}H^{q}(\gaun,M)\fre{}H^{q}(\Q_{p},M^{-})\oplus{}H^{q}_{N}(M)
     \fre{}\cdots.
\end{equation}
Here $M^{-}:=M/M^{+}=\mathbb{M}/\mathbb{M}^{+}\otimes_{S}\mathscr{S}$, 
$\dercts(G,\star)\in{}\mathrm{D}(\mathscr{S})^{b}_{\mathrm{ft}}$
is the image of $\ctsb(G,\star)$ in the derived category, and  we write 
for simplicity  $H^{q}_{N}(M):=\bigoplus_{l|N}H^{q}(\Q_{l},M)$.

\subsection{The extended Selmer group}\label{Extended Selmer Section} Let $\mathscr{S}=\Q_{p}$ and $M=V_{p}(A)$,
with the nearly-ordinary structure $i^{+} : \Q_{p}(1)\hookrightarrow{}V_{p}(A)$ given in  $(\ref{eq:tate exact})$.
By \cite[12.5.9.2]{Ne}, one can extract from $(\ref{eq:long nekovar})$ a short exact sequence of $\Q_{p}$-modules
\begin{equation}\label{eq:extended selmer}
        0\fre{}\Q_{p}\fre{}\exsel^{1}(\Q,V_{p}(A))\fre{}H^{1}_{f}(\Q,V_{p}(A))\fre{}0,
\end{equation}
where the left-most term arises as $H^{0}(\Q_{p},\Q_{p})=H^{0}(\Q_{p},V_{p}(A)^{-})$
and $H^{1}_{f}(\Q,V_{p}(A))\subset{}H^{1}(\gaun,V_{p}(A))$ is the Bloch--Kato Selmer group of $V_{p}(A)$ \cite{B-K}.
In addition the projection in $(\ref{eq:extended selmer})$ admits a natural splitting 
\[
         \sigma^{\text{u-r}} : H^{1}_{f}(\Q,V_{p}(A))\lfre{}\exsel^{1}(\Q,V_{p}(A)),
\]
characterised by the following property. Let $\wp^{+} : \exsel^{1}(\Q,V_{p}(A))\fre{}
H^{1}(\Q_{p},\Q_{p}(1))=\Q_{p}^{\ast}\widehat{\otimes}\Q_{p}$
be the morphism induced by the natural projection $\derco(\Q,V_{p}(A))\fre{}\dercts(\Q_{p},\Q_{p}(1))$.
Then 
\begin{equation}\label{eq:property section}
         \wp^{+}\circ{}\sigma^{\text{u-r}}\Big(H^{1}_{f}(\Q,V_{p}(A))\Big)
         \subset{}H^{1}_{f}(\Q_{p},\Q_{p}(1))=\Z_{p}^{\ast}\widehat{\otimes}\Q_{p}.
\end{equation}
This follows  from   Section 11.4 of \cite{Ne}, thanks to the fact that 
$\mathscr{L}_{p}(A)\not=0$ by \cite{M-man}. We use the section $\sigma^{\text{u-r}}$ to obtain the identification
$\exsel^{1}(\Q,V_{p}(A))\cong{}\Q_{p}\oplus{}H^{1}_{f}(\Q,V_{p}(A))$. 
Moreover, we identify the Tate period $q_{A}$ with the canonical generator of $\Q_{p}\subset{}\exsel^{1}(\Q,V_{p}(A))$.
In other words, from now on 
\begin{equation}\label{eq:naive extended}
           \exsel^{1}(\Q,V_{p}(A))=\Q_{p}\cdot{}q_{A}\oplus{}H^{1}_{f}(\Q,V_{p}(A)).
\end{equation}


\subsection{The height-weight pairing}\label{the pairing} As in Section $\ref{proofmainderalg}$, let $\mathscr{R}$ be the localisation 
of $\R=R\llbracket{}G_{\infty}\rrbracket$ at $\overline{\mathfrak{p}}=(\mathfrak{p},I)$
and let $\mathscr{P}=(\varpi,\varsigma)\cdot{}\mathscr{R}$
be its maximal ideal. Let $\mathscr{M}^{\mathrm{reg}}\subset{}\mathrm{Frac}(\mathscr{A})$
be the localisation of $\mathscr{A}$ at the multiplicative subset consisting of elements $g(k,s)\in{}\mathscr{A}$
such that $g(2,1)\not=0$,
and write again $\mathscr{J}\subset{}\mathscr{M}^{\mathrm{reg}}$ for the ideal of functions vanishing at $(2,1)$.
The Mellin transform extends to a morphism $\Mm : \mathscr{R}\fre{}\mathscr{M}^{\mathrm{reg}}$
mapping $\mathscr{P}$ into $\mathscr{J}$ and then induces a morphism
of $\Q_{p}$-modules  $\Mm : \mathscr{P}/\mathscr{P}^{2}\fre{}\mathscr{J}/\mathscr{J}^{2}$.

Denote by $\chi_{\infty} : \gaun\twoheadrightarrow{}G_{\infty}\subset{}\R^{\ast}$ the tautological representation
of $\gaun$
and define 
\[
                \overline{\T}:=\T\otimes_{R}\R(\chi_{\infty}^{-1})\in{}_{\R[\gaun]}\mathrm{}\mathrm{Mod};\ \ \  
                 T:=\overline{\T}\otimes_{\R}\mathscr{R}\in{}_{\mathscr{R}[\gaun]}\mathrm{Mod}.
\]
Similarly, define the $\R[G_{p}]$-modules $\overline{\T}^{\pm}:=\T^{\pm}\otimes_{R}\R(\chi_{\infty}^{-1})$
and the $\mathscr{R}[G_{p}]$-modules $T^{\pm}:=\overline{\T}^{\pm}\otimes_{R}\mathscr{R}$.
Then
$\overline{\T}^{\pm}$  are free $\R$-modules of rank one, so that 
$T=\big(T,T^{+}\big)$ is a nearly-ordinary $\mathscr{R}$-adic representation of $\gaun$.
In particular, there is  a short exact sequence of $\mathscr{R}[G_{p}]$-modules \vspace{-2mm}
\begin{equation}\label{eq:filtration T}
      0\lfre{}T^{+}\lfre{i^{+}}T\lfre{p^{-}}T^{-}\lfre{}0\vspace{-1mm}
\end{equation}
and the Selmer complex $\derco(\Q,T)\in{}\mathrm{D}(\mathscr{R})^{b}_{\mathrm{ft}}$ is defined. 

Denote by $\xi : \mathscr{R}\twoheadrightarrow{}\Q_{p}$  the composition of $\psi : R_{\mathfrak{p}}\twoheadrightarrow{}\Q_{p}$ with  the augmentation map
$\varepsilon : \mathscr{R}\twoheadrightarrow{}R_{\mathfrak{p}}$. Since $\varepsilon\circ{}\chi_{\infty}$
is the trivial character,  $(\ref{eq:ES tate})$ induces 
a natural isomorphism of $\Q_{p}[\gaun]$-modules
\begin{equation}\label{eq:isoTxi}
                        T_{\xi}:=T\otimes_{\mathscr{R},\xi}\Q_{p}\cong{}V_{p}(A).
\end{equation}
Similarly $T_{\xi}^{+}:=T^{+}\otimes_{\mathscr{R},\xi}\Q_{p}\cong{}\Q_{p}(1)$
and $T_{\xi}^{-}:=T^{-}\otimes_{\mathscr{R},\xi}\Q_{p}\cong{}\Q_{p}$ as $\Q_{p}[G_{p}]$-modules,
and $(\ref{eq:isoTxi})$ extends to an isomorphism between the $\xi$-base change of 
$(\ref{eq:filtration T})$ and the tensor product of  $(\ref{eq:tate exact})$ with $\Q_{p}$.
This induces a canonical isomorphism of complexes of $\Q_{p}$-modules 
\begin{equation}\label{eq:iso 2-variable}
              \derco(\Q,T_{\xi})\cong{}\derco(\Q,V_{p}(A)).
\end{equation}

\subsubsection{The Bockstein map} By the general behaviour of Selmer complexes under base change, $\derco(\Q,T_{\xi})$
is isomorphic to the derived base change  $\derco(\Q,T)\derot{\mathscr{R},\xi}\Q_{p}$. This yields
via $(\ref{eq:iso 2-variable})$ 
natural isomorphisms in $\mathrm{D}(\mathscr{R})^{b}_{\mathrm{ft}}$:
\begin{equation}\label{eq:control Selmer complexes}
                 \derco(\Q,T)\derot{\mathscr{R},\xi}\Q_{p}\cong{}\derco(\Q,V_{p}(A));\ \ \ 
                 \derco(\Q,T)\derot{\mathscr{R}}\mathscr{P}/\mathscr{P}^{2}\cong{}\derco(\Q,V_{p}(A))\otimes_{\Q_{p}}\mathscr{P}/\mathscr{P}^{2}.
\end{equation}
(For the details see the proof of Lemma $\ref{explicit bockstein}$ below; see also the  proof of  Proposition 8.10.1 of \cite{Ne}.)
Applying the functor $\derco(\Q,T)\derot{\mathscr{R}}-$ to the exact triangle \vspace{-1mm}
\begin{equation}\label{eq:exact delta Bockstein}
            \mathscr{P}/\mathscr{P}^{2}\lfre{}\mathscr{R}/\mathscr{P}^{2}\lfre{\xi}\Q_{p}\lfre{\partial_{\xi}}\mathscr{P}/\mathscr{P}^{2}[1]
\end{equation}
then induces a morphism in $\mathrm{D}(\mathscr{R})^{b}_{\mathrm{ft}}$:
\[
            \widetilde{\boldsymbol{\beta}}_{p}
       : \derco(\Q,V_{p}(A))\lfre{}\derco(\Q,V_{p}(A))[1]\otimes_{\Q_{p}}\mathscr{P}/\mathscr{P}^{2},
\]
called the \emph{derived Bockstein map}. It induces in cohomology the \emph{Bockstein map}
\[
           \bock:=H^{1}(\boldsymbol{\widetilde{\beta}}_{p})  : \exsel^{1}(\Q,V_{p}(A))\lfre{}\exsel^{2}(\Q,V_{p}(A))\otimes_{\Q_{p}}\mathscr{P}/\mathscr{P}^{2}.
\]

\subsubsection{Definition of the pairing} \neko's generalisation of Poitou-Tate duality  attaches to the Weil pairing on $V_{p}(A)$
a perfect, global cup-product pairing
\cite[Section 6]{Ne}
\[
        \dia{-,-}_{\mathrm{Nek}} : \exsel^{2}(\Q,V_p(A))\otimes_{\Q_p}\exsel^{1}(\Q,V_p(A))
        \longrightarrow{}H^{3}_{c,\mathrm{cont}}(\Q,\Q_p(1))\cong{}\Q_p,
\]
where $H^{\ast}_{c,\mathrm{cont}}(\Q,-)$ denotes the compactly supported cohomology and 
the last \emph{trace isomorphism} comes from global class field theory \cite[Section 5]{Ne}. 
(See in particular Sections 5.3.1.3, 5.4.1 and 6.3 of \cite{Ne}.)

We define the \emph{(cyclotomic) height-weight pairing }
\[
         \hwp{-}{-} : \exsel^{1}(\Q,V_{p}(A))\otimes_{\Q_{p}}\exsel^{1}(\Q,V_{p}(A))\lfre{}\mathscr{J}/\mathscr{J}^{2}
\]
as the composition of 
\[
        \widetilde{\beta}_{p}\otimes\mathrm{id} : 
        \exsel^{1}(\Q,V_{p}(A))\otimes_{\Q_{p}}\exsel^{1}(\Q,V_{p}(A))\lfre{}
        \exsel^{2}(\Q,V_{p}(A))\otimes_{\Q_{p}}\exsel^{1}(\Q,V_{p}(A))\otimes_{\Q_{p}}\mathscr{P}/\mathscr{P}^{2}
\]
with 
\[        
     \dia{-,-}_{\mathrm{Nek}}\otimes{}\Mm :
     \exsel^{2}(\Q,V_{p}(A))\otimes_{\Q_{p}}\exsel^{1}(\Q,V_{p}(A))\otimes_{\Q_{p}}\mathscr{P}/\mathscr{P}^{2}
     \lfre{}\mathscr{J}/\mathscr{J}^{2}.
\]
We also write $\hwp{-}{-}(k,s):=\hwp{-}{-}$ when we want to emphasise the 
dependence of $\hwp{-}{-}$ on the variables $(k,s)$. 
If $F : \mathscr{M}^{\mathrm{reg}}\fre{}\mathscr{M}^{\mathrm{reg}}$ is a morphism  of $\Q_{p}$-algebras 
s.t. $F(\mathscr{J})\subset{}\mathscr{J}$,
then $\hwp{-}{-}(F(k,s)):=F\circ{}\hwp{-}{-}$.

\begin{remark}\label{normalisation pairing}\emph{Let $W : V_{p}(A)\otimes_{\Q_{p}}V_{p}(A)\fre{}\Q_{p}(1)$ be the Weil pairing, 
normalised as in \cite[Chapter III]{Sil-1}. In order to define $\hwp{-}{-}$ without ambiguities, one has to fix
the Tate parametrisation $\Phi_{\mathrm{Tate}}$ introduced in $(\ref{eq:Tate iso})$, which is unique up to sign.
We do this by requiring: $W(a,i^{+}(b))=p^{-}(a)\cdot{}b$ for every $a\in{}V_{p}(A)$ and $b\in{}\Q_{p}(1)$. }

\end{remark}

\subsubsection{Basic properties}\label{basic properties} 
In this section we discuss the basic properties satisfied by the height-weight pairing, referring to \cite[Section 11]{Ne}
and \cite{Ven} for the proofs. 

Section 7 of \cite{Nekh} defines a symmetric \emph{(cyclotomic) canonical height pairing}
\[
                 \dia{-,-}^{\mathrm{cyc}}_{p} : H^{1}_{f}(\Q,V_{p}(A))\otimes_{\Q_{p}}H^{1}_{f}(\Q,V_{p}(A))\lfre{}\Q_{p},
\] 
denoted $h^{\mathrm{can}}$ in \cite{Nekh}. More precisely, after identifying $V_{p}(A)$
with its Kummer dual under the Weil pairing,
the definition of $h^{\mathrm{can}}$ rests on the choices of a continuous morphism 
$\lambda_{p} : \mathbb{A}^{\ast}_{\Q}/\Q^{\ast}\fre{}\Q_{p}$
(where $\mathbb{A}^{\ast}_{\Q}$ is the group  of ideles of $\Q$)
and a splitting $\texttt{sp} : D_{\mathrm{dR}}(V_{p}(A))\twoheadrightarrow{}\mathrm{Fil}^{0}D_{\mathrm{dR}}(V_{p}(A))$
of the natural filtration.
In the definition of $\dia{-,-}^{\mathrm{cyc}}_{p}$,
$\lambda_{p}$ is the  composition of the Artin map $\mathbb{A}^{\ast}_{\Q}/\Q^{\ast}\fre{}G_{\Q}^{\mathrm{ab}}$
with $\log_{p}\circ{}\chi_{\mathrm{cyc}} : G_{\Q}^{\mathrm{ab}}\fre{}\Q_{p}$,
and $\texttt{sp}$
is the splitting induced by $(\ref{eq:tangent and fil})$. 
Let $\{\cdot{}\} : \mathscr{J}\twoheadrightarrow{}\mathscr{J}/\mathscr{J}^{2}$ denote  the projection.
Given $g(k,s)=a\cdot{}\{s-1\}+b\cdot{}\{k-2\}\in{}\mathscr{J}/\mathscr{J}^{2}$,
write $\frac{d}{ds}g(2,s)_{s=1}:=a$ and $\frac{d}{dk}g(k,1)_{k=2}:=b$.

\begin{theo}\label{mainreg} The $\Q_p$-bilinear form $\hwp{-}{-}$ enjoys the following properties. \par 
$1.$ (Cyclotomic specialisation) For every $x,y\in{}H^{1}_{f}(\Q,V_{p}(A))$:
\[
                        \frac{d}{ds}\lri{\hwp{x}{y}(2,s)}_{s=1}={}\dia{x,y}_{p}^{\mathrm{cyc}}.
\]\par
$2.$ (Exceptional zero formulae) For every $z\in{}H^{1}_{f}(\Q,V_p(A))$:
\[
      \hwp{q_{A}}{q_{A}}=\log_p(q_{A})\cdot{}\{s-k/2\};\ \ \hwp{q_{A}}{z}=\log_{A}\big(\mathrm{res}_p(z)\big)\cdot{}\{s-1\},
\]
where $\log_{A}=\log_{q_{A}}\circ{}\Phi_{\mathrm{Tate}}^{-1} : H^{1}_{f}(\Q_p,V_p(A))\cong{}
A(\Q_p)\widehat{\otimes{}}\Q_p
\fre{}\Q_p$ is the formal group logarithm.\par
$3.$ (Functional equation) For every $x,y\in{}\exsel^{1}(\Q,V_p(A))$:
\[
            \hwp{y}{x}(k,s)=-\hwp{x}{y}(k,k-s).
\]
\end{theo}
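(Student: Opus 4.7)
All three statements will be established within \neko's formalism of Bockstein-type pairings on Selmer complexes \cite[Section 11]{Ne}, by decomposing $\mathscr{J}/\mathscr{J}^{2}$ along the two directions $\{s-1\}$ (the Mellin image of $\{\varsigma\}$, up to a nonzero constant) and $\{k-2\}$ (the Mellin image of $\{\varpi\}$), and computing the cyclotomic and weight components of $\hwp{-}{-}$ separately.

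For Part 1 I would restrict the whole construction to the vertical line $k=2$. On this line $T$ reduces to the cyclotomic deformation of $V_{p}(A)$, the derived Bockstein $\widetilde{\boldsymbol{\beta}}_{p}$ restricts to the classical cyclotomic Bockstein attached to the augmentation ideal of $\Ic$, and the induced cup-product pairing is \neko's canonical cyclotomic $p$-adic height with respect to the Hodge splitting and $\lambda_{p}=\log_{p}\circ\chi_{\mathrm{cyc}}$. By Section 7 of \cite{Nekh} this coincides with $\dia{-,-}_{p}^{\mathrm{cyc}}$, and extracting the coefficient of $\{s-1\}$ in $\hwp{x}{y}(2,s)$ yields the stated formula.

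For Part 2 the starting point is the description of $q_{A}$: by the section $\sigma^{\text{u-r}}$ and the long exact sequence (\ref{eq:long nekovar}), it is the image of the tautological generator of $H^{0}(\Q_{p},V_{p}(A)^{-})=\Q_{p}$ under the connecting map into $\exsel^{1}$. Pairing $\widetilde{\beta}_{p}(q_{A})$ against another class reduces, via compatibility of \neko's cup-product with the local decomposition, to a local cup-product at $p$ in the two-variable deformation. For a Selmer class $z$ only the cyclotomic direction survives, because the weight-direction deformation of the unramified quotient $T^{-}$ at $\psi$ is controlled by the derivative of $\mathbf{a}_{p}$, whose pairing with the Bloch--Kato finite part at $p$ vanishes; local Tate duality (as in Proposition \ref{dercol}) then identifies the surviving coefficient with $\log_{A}(\mathrm{res}_{p}(z))$. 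For the self-pairing of $q_{A}$ the weight contribution is present, and the same Greenberg--Stevens derivative formula for $\mathbf{a}_{p}$ used in Section \ref{proofmainderalg}, combined with $\mathscr{L}_{p}(A)=\log_{p}(q_{A})/\mathrm{ord}_{p}(q_{A})$, assembles the cyclotomic and weight coefficients into $\log_{p}(q_{A})\cdot\{s-k/2\}$.

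Part 3 combines the graded commutativity of \neko's cup-product with the self-duality of $T$ induced by the Weil pairing on $\mathbf{T}$; the associated involution on $\overline{R}$ corresponds under $\Mm$ to the reflection about the central critical line $s=k/2$, and the sign comes from antisymmetry of the cup-product on cohomological degree one. The main obstacle will be Part 2: isolating the direction $\{s-k/2\}$ in the self-pairing of $q_{A}$ is the first place where the weight-variable Bockstein genuinely contributes, and tracking the precise coefficient $\log_{p}(q_{A})$ requires the normalisation of Remark \ref{normalisation pairing} together with the explicit self-duality of Hida's deformation $\mathbf{T}$.
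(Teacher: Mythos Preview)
The paper does not prove this theorem in the text: its proof reads in full ``Part~1 is proved in \cite[Corollary 11.4.7]{Ne}. Part~2 and Part~3 are proved in \cite{Ven}.'' So there is no in-paper argument to compare your plan against; what you have written is a sketch of what those references establish.

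Your outline for Part~1 is exactly the content of \cite[Cor.~11.4.7]{Ne}: restricting the two-variable Bockstein to the cyclotomic direction and identifying the resulting pairing with $h^{\mathrm{can}}$ for the chosen $\lambda_{p}$ and Hodge splitting. For Part~3 your idea is also the right one: the self-duality of $\T$ (compatible with the Weil pairing on $V_{p}(A)$ after specialisation) induces an involution on $\overline{R}$ which, under $\Mm$, becomes $(k,s)\mapsto(k,k-s)$, and \neko's symmetry for Bockstein pairings \cite[11.2--11.3]{Ne} supplies the sign.

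For Part~2 your plan is in the right spirit but the justification you give for the vanishing of the $\{k-2\}$-component of $\hwp{q_{A}}{z}$ is not quite right. The point is not that ``the derivative of $\mathbf{a}_{p}$ pairs trivially with the finite part''; rather, one computes $\widetilde{\beta}_{p}(q_{A})$ explicitly. The class $q_{A}$ arises from $1\in H^{0}(\Q_{p},V_{p}(A)^{-})=\Q_{p}$, and its Bockstein lands (via $\jmath$) in $\exsel^{2}$ through classes in $H^{1}(\Q_{p},\Q_{p})=\mathrm{Hom}_{\mathrm{cont}}(\Q_{p}^{\ast},\Q_{p})$, one for each direction. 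By Lemma~\ref{adjoint nekovar pairing} these pair with $\boldsymbol{x}$ via $-\kappa(\wp^{+}(\boldsymbol{x}))$. For $z\in H^{1}_{f}$ one has $\wp^{+}(z)\in\Z_{p}^{\ast}\widehat{\otimes}\Q_{p}$ by $(\ref{eq:property section})$, so only the $\log_{p}$-component of each $\kappa$ survives; the actual computation in \cite{Ven} shows that the weight-direction class $\kappa_{\varpi}$ is a multiple of $\mathrm{ord}_{p}$ (coming from the unramified character $\widetilde{\mathbf{a}}_{p}$), hence kills $\Z_{p}^{\ast}\widehat{\otimes}\Q_{p}$. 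That is the mechanism you should invoke, and it is also what makes the self-pairing $\hwp{q_{A}}{q_{A}}$ pick up its $\{k-2\}$-term, since $\wp^{+}(q_{A})=q_{A}\widehat{\otimes}1$ has nontrivial $\mathrm{ord}_{p}$.
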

\begin{proof} 
Part $1$ is proved in \cite[Corollary 11.4.7]{Ne}. Part 2 and Part 3
are proved  in  \cite{Ven}.
\end{proof}

\section{Exceptional zero  formulae \`a la Rubin}\label{Rubin GZ} 
Recall the extended height-weight 
$\widetilde{h}_{p} : H^{1}_{f}(\Q,V_{p}(A))\fre{}\mathscr{J}^{2}/\mathscr{J}^{3}$ 
introduced in  $(\ref{eq:hwp definition})$. 
For every 
$\mathfrak{Z}=(\mathfrak{Z}_{n})\in{}H^{1}_{\mathrm{Iw}}(\Q_{\infty},\T)$,
write $\mathfrak{Z}_{n,\psi}\in{}H^{1}(\gaun_{n},V_{p}(A))$ for the image of 
$\mathfrak{Z}_{n}$ under the morphism induced by $\T\twoheadrightarrow{}\T_{\psi}\subset{}V_{p}(A)$.
The aim of this section is to prove the following theorem, reminiscent of the  \emph{Rubin formulae} proved by Rubin 
\cite{Rubh} and Perrin-Riou \cite[Section 2.3]{PRconj} in a different setting
(see also \cite[Sec. 11]{Ne}).

\begin{theo}\label{p-adic GZ} Let $\mathfrak{Z}=(\mathfrak{Z}_{n})\in{}H^{1}_{\mathrm{Iw}}(\Q_{\infty},\T)$
and let $\mathfrak{z}:=\mathfrak{Z}_{0,\psi}
\in{}H^{1}(\gaun,V_{p}(A))$.

$1.$ We have the equality in $\mathscr{J}/\mathscr{J}^{2}$:
\[
              \mathcal{L}_{\T}\big(\mathrm{res}_{p}(\mathfrak{Z}),k,s\big)\ \mathrm{mod}\ \mathscr{J}^{2}=\frac{1}{\mathrm{ord}_{p}(q_{A})}\lri{1-\frac{1}{p}}^{-1}
              \exp_{A}^{\ast}\big(\mathrm{res}_{p}(\mathfrak{z})\big)\cdot{}\hwp{q_{A}}{q_{A}}.
\]
In particular: $\mathcal{L}_{\T}\big(\mathrm{res}_{p}(\mathfrak{Z}),k,s\big)\in{}\mathscr{J}^{2}$ if and only if $\mathfrak{z}\in{}H^{1}_{f}(\Q,V_{p}(A))$.

$2.$ If $\mathfrak{z}\in{}H^{1}_{f}(\Q,V_{p}(A))$, we have the equality in $\mathscr{J}^{2}/\mathscr{J}^{3}$:
\[
                  \log_{A}\big(\mathrm{res}_{p}(\mathfrak{z})\big)\cdot{}\mathcal{L}_{\T}\big(\mathrm{res}_{p}(\mathfrak{Z}),k,s\big)\ \mathrm{mod}\ \mathscr{J}^{3}=
                  \frac{-1}{\mathrm{ord}_{p}(q_{A})}\lri{1-\frac{1}{p}}^{-1}
                  \cdot{}\widetilde{h}_{p}(\mathfrak{z}).
\]
\end{theo}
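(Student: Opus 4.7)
The plan is to derive this directly from Theorem~$\ref{mainderalg}(2)$ combined with the exceptional zero formula of Theorem~$\ref{mainreg}(2)$. Applying Theorem~$\ref{mainderalg}(2)$ to $\mathrm{res}_{p}(\mathfrak{Z})$, one obtains
\[
\bigl(1-p^{-1}\bigr)\mathcal{L}_{\T}\bigl(\mathrm{res}_{p}(\mathfrak{Z}),k,s\bigr) \equiv \mathscr{L}_{p}(A)\,\exp_{A}^{\ast}\bigl(\mathrm{res}_{p}(\mathfrak{z})\bigr)\,(s-k/2) \pmod{\mathscr{J}^{2}}.
\]
Since $\mathscr{L}_{p}(A)=\log_{p}(q_{A})/\mathrm{ord}_{p}(q_{A})$ and $\hwp{q_{A}}{q_{A}}=\log_{p}(q_{A})\cdot\{s-k/2\}$ by Theorem~$\ref{mainreg}(2)$, the right-hand side can be rewritten as $\mathrm{ord}_{p}(q_{A})^{-1}\exp_{A}^{\ast}(\mathrm{res}_{p}(\mathfrak{z}))\cdot\hwp{q_{A}}{q_{A}}$, which is the displayed formula. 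The ``in particular'' then follows from $H^{1}_{f}(\Q_{p},V_{p}(A))=\ker(\exp_{A}^{\ast})$ (cf.\ the proof of Corollary~$\ref{derivative coleman}$), combined with the fact that the Bloch--Kato local conditions at primes away from $p$ are automatically satisfied for classes arising from $H^{1}_{\mathrm{Iw}}(\Q_{\infty},\T)$.

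\textbf{Part 2.} Suppose $\mathfrak{z}\in H^{1}_{f}(\Q,V_{p}(A))$. By Part 1, $\mathcal{L}_{\T}(\mathrm{res}_{p}(\mathfrak{Z}),k,s)\in\mathscr{J}^{2}$, and the task is to identify its class modulo $\mathscr{J}^{3}$. The strategy is to invoke the Bockstein description of $\hwp{-}{-}$ and convert a local-at-$p$ dual-exponential computation into a Gram-determinant identity. Concretely, my plan is threefold. First, view $\mathrm{res}_{p}(\mathfrak{Z})$ modulo $\mathscr{P}^{2}$ as a cocycle-level lift of $\mathrm{res}_{p}(\mathfrak{z})$ in $H^{1}(\Q_{p},T/\mathscr{P}^{2})$ along the two-parameter deformation; its defect from landing in the local condition $T^{+}$ is captured by $\mathcal{L}_{\T}(\mathrm{res}_{p}(\mathfrak{Z}),k,s)$, using Proposition~$\ref{modifiedochiai}$ and Theorem~$\ref{mainderalg}$. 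Second, feed this local datum into the construction of $\bock(\mathfrak{z})$ from $\derco(\Q,T)$, and pair $\bock(\mathfrak{z})$ against $q_{A}$ and $\mathfrak{z}$ using \neko's Poitou--Tate duality $\dia{-,-}_{\mathrm{Nek}}$: after Mellin transform, these pairings produce the entries $\hwp{\mathfrak{z}}{q_{A}}$ and $\hwp{\mathfrak{z}}{\mathfrak{z}}$. The remaining entries $\hwp{q_{A}}{q_{A}}$ and $\hwp{q_{A}}{\mathfrak{z}}=\log_{A}(\mathrm{res}_{p}(\mathfrak{z}))\cdot\{s-1\}$ are supplied by Theorem~$\ref{mainreg}(2)$. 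Third, assemble these four entries into the $2\times 2$ determinant $\widetilde{h}_{p}(\mathfrak{z})$: the coefficient $\log_{A}(\mathrm{res}_{p}(\mathfrak{z}))$ on the left of Part 2 emerges naturally from the formula for $\hwp{q_{A}}{\mathfrak{z}}$, and the constant $-1/(\mathrm{ord}_{p}(q_{A})(1-p^{-1}))$ is read off by matching the remaining period and Euler factors.

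The principal obstacle lies in the first two steps of the plan: matching, at the cochain level, the local-at-$p$ Bockstein of the two-variable Selmer complex of $T$ with the explicit period-theoretic interpolation of $\mathcal{L}_{\T}$. This requires careful \neko-style bookkeeping of signs, of the Weil-pairing normalisation fixed in Remark~$\ref{normalisation pairing}$, and of the dual-exponential period $\mho$ of $(\ref{eq:norm mho})$. The Euler factor $(1-p^{-1})^{-1}$ originates from the interpolation formula of Proposition~$\ref{modifiedochiai}$ at the trivial character, while $\mathrm{ord}_{p}(q_{A})^{-1}$ traces back to the Tate uniformisation underlying the splitting $(\ref{eq:naive extended})$.
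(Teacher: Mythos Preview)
Your treatment of Part 1 is exactly the paper's: Theorem~\ref{mainderalg}(2) plus the formula $\hwp{q_A}{q_A}=\log_p(q_A)\{s-k/2\}$ from Theorem~\ref{mainreg}(2). Your three-step plan for Part 2 is also the paper's route---an explicit Bockstein computation feeding into the Gram determinant---but you are underestimating what happens in steps two and three.

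The paper makes $\bock(\mathfrak{z})$ explicit via a Koszul resolution (Lemma~\ref{explicit bockstein}) and shows it is represented by a pair of homomorphisms $\kappa_\varpi,\kappa_\varsigma\in\mathrm{Hom}_{\mathrm{cont}}(\Q_p^*,\Q_p)$; a duality lemma then gives $\hwp{\mathfrak{z}}{\boldsymbol{x}}$ in terms of $\kappa_?(\wp^+(\boldsymbol{x}))$. The point you miss is that $\mathcal{L}_\T(\mathrm{res}_p(\mathfrak{Z}),k,s)\bmod\mathscr{J}^3$ is controlled by only \emph{three} scalars $\mathrm{Der}_{\mathrm{cyc}},\mathrm{Der}_\dag,\mathrm{Der}_{\mathrm{wt}}$ (Corollary~\ref{mainderalgdisg}), whereas $(\kappa_\varpi,\kappa_\varsigma)$ lives in a four-dimensional space. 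So the ``local defect'' you describe in step one does \emph{not} determine the Bockstein: each of $\hwp{\mathfrak{z}}{\mathfrak{z}}$ and $\hwp{\mathfrak{z}}{q_A}$ carries an extra constant (called $a(\varsigma)$ in the paper) that is invisible to $\mathcal{L}_\T$. The actual content of the proof is the verification that, once one inserts the exact values of $\hwp{q_A}{q_A}$ and $\hwp{q_A}{\mathfrak{z}}$ from Theorem~\ref{mainreg}(2), this undetermined constant cancels in the $2\times2$ determinant, leaving precisely the quadratic form in $(s-1),(k-2)$ dictated by Corollary~\ref{mainderalgdisg}. That cancellation---not the tracking of periods and Euler factors---is the computational heart of Part 2, and your plan as written does not anticipate it.
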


This result, whose proof is given  in Section $\ref{proof of p-adic GZ}$ below, becomes particularly relevant when combined with the work
of Kato. Recall the class  $\bki_{\infty}\in{}H^{1}_{\mathrm{Iw}}(\Q_{\infty},\T)$ appearing in Theorem $\ref{maincohpad}$. By \emph{loc. cit.} and equation $(\ref{eq:definition LMK})$
\begin{equation}\label{eq:kato Main}
     \mathcal{L}_{\T}\big(\mathrm{res}_{p}\big(\bki_{\infty}\big),k,s\big)
     =L_{p}(f_{\infty},k,s).
\end{equation}      
With the notations of the introduction, we set 
\[
                    \zeta^{\mathrm{BK}}_{\infty}:=\bki_{\infty,\psi}\in{}H^{1}_{\mathrm{Iw}}(\Q_{\infty},T_{p}(A));\ \ \ 
                    \zeta^{\mathrm{BK}}=\bki_{0,\psi}\in{}H^{1}(\gaun,V_{p}(A)).
\]
By Corollary $\ref{Ochiai vs Coleman}$ and equation $(\ref{eq:hht})$, 
$\mathcal{L}_{A}\big(\mathrm{res}_{p}(\zeta_{\infty}^{\mathrm{BK}})\big)=L_{p}(A/\Q)$; 
this is  equation $(\ref{eq:kato reciprocity +})$ in the  introduction.

Equation $(\ref{eq:kato Main})$ and  Theorem $\ref{p-adic GZ}(1)$ yield the following result, which in light of  Kato's reciprocity law $(\ref{eq:kato reciprocity})$
and Theorem $\ref{mainreg}(2)$ can be seen as a variant of the main result of \cite{G-S}.

\begin{theo}\label{main GS} We have the equality in $\mathscr{J}/\mathscr{J}^{2}$:
\[
                    L_{p}(f_{\infty},k,s)\ \mathrm{mod}\ \mathscr{J}^{2}=\frac{1}{\mathrm{ord}_{p}(q_{A})}\lri{1-\frac{1}{p}}^{-1}
              \exp_{A}^{\ast}\big(\mathrm{res}_{p}\big(\zeta^{\mathrm{BK}}\big)\big)\cdot{}\hwp{q_{A}}{q_{A}}.
\] 
In particular, $L_{p}(f_{\infty},k,s)\in{}\mathscr{J}^{2}$ if and only if $\zeta^{\mathrm{BK}}$ is  a Selmer class. 
\end{theo}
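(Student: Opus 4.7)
The plan is to derive Theorem \ref{main GS} as a direct specialisation of Theorem \ref{p-adic GZ}(1) to Kato's two-variable Beilinson--Kato class $\bki_{\infty} \in H^{1}_{\mathrm{Iw}}(\Q_{\infty},\T)$ furnished by Theorem \ref{maincohpad}. The only input from Kato's side is already packaged into equation $(\ref{eq:kato Main})$, which identifies $\mathcal{L}_{\T}\big(\mathrm{res}_{p}(\bki_{\infty}),k,s\big)$ with the Mazur--Kitagawa two-variable $p$-adic $L$-function $L_{p}(f_{\infty},k,s)$.

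Concretely, I would take $\mathfrak{Z}=\bki_{\infty}$ in the statement of Theorem \ref{p-adic GZ}(1), so that by the definitions given immediately after $(\ref{eq:kato Main})$ one has $\mathfrak{z}:=\bki_{0,\psi}=\zeta^{\mathrm{BK}}\in H^{1}(\gaun,V_{p}(A))$. The left-hand side of the displayed formula in Theorem \ref{p-adic GZ}(1) then becomes $L_{p}(f_{\infty},k,s)\ \mathrm{mod}\ \mathscr{J}^{2}$ by $(\ref{eq:kato Main})$, while the right-hand side is \emph{verbatim} the expression appearing in Theorem \ref{main GS}. This yields the displayed equality in $\mathscr{J}/\mathscr{J}^{2}$ at once.

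For the final "in particular" assertion, I would transport the corresponding "in particular" clause of Theorem \ref{p-adic GZ}(1) through the same substitution: $L_{p}(f_{\infty},k,s)\in\mathscr{J}^{2}$ is equivalent to $\mathcal{L}_{\T}\big(\mathrm{res}_{p}(\bki_{\infty}),k,s\big)\in\mathscr{J}^{2}$, which by Theorem \ref{p-adic GZ}(1) is in turn equivalent to $\zeta^{\mathrm{BK}}\in H^{1}_{f}(\Q,V_{p}(A))$, i.e.\ to $\zeta^{\mathrm{BK}}$ being a Selmer class. A small consistency check worth recording on the side is that the factor $\hwp{q_{A}}{q_{A}}=\log_{p}(q_{A})\cdot\{s-k/2\}$ is non-zero in $\mathscr{J}/\mathscr{J}^{2}$, since $\log_{p}(q_{A})\neq 0$ by \cite{M-man}; this ensures the "if and only if" is genuine and not degenerate.

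Since the derivation amounts to an immediate substitution, there is no substantive obstacle inside the proof of Theorem \ref{main GS} itself: the entirety of the real content has been concentrated upstream, in Theorem \ref{p-adic GZ}(1) (which combines the first-order Taylor expansion of $\mathcal{L}_{\T}(\cdot,k,s)$ from Theorem \ref{mainderalg} with the exceptional zero identity of Theorem \ref{mainreg}(2) for $\hwp{q_{A}}{q_{A}}$) and in Theorem \ref{maincohpad} (which packages Kato's Euler system into a two-variable Iwasawa class interpolating the Mazur--Kitagawa $L$-function). Once Theorem \ref{p-adic GZ}(1) is granted, Theorem \ref{main GS} is essentially formal.
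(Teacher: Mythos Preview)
Your proposal is correct and matches the paper's own argument exactly: the paper states Theorem~\ref{main GS} immediately after equation~$(\ref{eq:kato Main})$ and explicitly records that it is obtained by combining that equation with Theorem~\ref{p-adic GZ}(1), which is precisely the substitution $\mathfrak{Z}=\bki_{\infty}$, $\mathfrak{z}=\zeta^{\mathrm{BK}}$ you describe.
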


Theorem $\ref{p-adic GZ}(2)$ and $(\ref{eq:kato Main})$ combine to give the following theorem (cf. Section $\ref{outline of the proof}$).

\begin{theo}\label{probably main paper} Assume that $\zeta^{\mathrm{BK}}\in{}H^{1}_{f}(\Q,V_{p}(A))$. Then we have the equality 
in $\mathscr{J}^{2}/\mathscr{J}^{3}$:
\[
         \log_{A}\big(\mathrm{res}_{p}\big(\zeta^{\mathrm{BK}}\big)\big)\cdot{}
         L_{p}(f_{\infty},k,s)\ \ \mathrm{mod}\ \mathscr{J}^{3}
         =\frac{-1}{\mathrm{ord}_{p}(q_{A})}\lri{1-\frac{1}{p}}^{-1}\cdot{}
         \widetilde{h}_{p}\big(\zeta^{\mathrm{BK}}\big).
\]
\end{theo}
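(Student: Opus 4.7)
The plan is to deduce this theorem directly as a corollary of Theorem~\ref{p-adic GZ}(2), applied to Kato's two-variable Beilinson--Kato class $\bki_{\infty}\in H^{1}_{\mathrm{Iw}}(\Q_{\infty},\T)$ provided by Theorem~\ref{maincohpad}. The key observation is that all of the substantial content is already packaged in those two earlier results: Theorem~\ref{p-adic GZ}(2) expresses $\log_{A}(\mathrm{res}_{p}(\mathfrak{z}))\cdot\mathcal{L}_{\T}(\mathrm{res}_{p}(\mathfrak{Z}),k,s)$ modulo $\mathscr{J}^{3}$ in terms of the extended height-weight of the specialisation $\mathfrak{z}=\mathfrak{Z}_{0,\psi}$, and Theorem~\ref{maincohpad} identifies the big dual exponential of $\mathrm{res}_{p}(\bki_{\infty})$ with $L_{p}(\mathbf{f})$.

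First I would set $\mathfrak{Z}:=\bki_{\infty}$ in Theorem~\ref{p-adic GZ}(2); by the definition recalled right after equation~$(\ref{eq:kato Main})$, one has $\mathfrak{z}=\bki_{0,\psi}=\zeta^{\mathrm{BK}}$, and the hypothesis $\zeta^{\mathrm{BK}}\in H^{1}_{f}(\Q,V_{p}(A))$ of the present theorem is exactly the hypothesis needed to invoke Part~2 of Theorem~\ref{p-adic GZ}. Second, I would rewrite the left-hand side of the resulting identity using the Mellin-transformed reciprocity law~$(\ref{eq:kato Main})$, which says $\mathcal{L}_{\T}(\mathrm{res}_{p}(\bki_{\infty}),k,s)=L_{p}(f_{\infty},k,s)$ in $\mathscr{J}\subset\mathscr{A}$; reducing this equality modulo $\mathscr{J}^{3}$ and multiplying by the scalar $\log_{A}(\mathrm{res}_{p}(\zeta^{\mathrm{BK}}))\in\Q_{p}$ is legitimate, since multiplication by a constant preserves the ideal $\mathscr{J}^{3}$.

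Substituting these two ingredients into the formula of Theorem~\ref{p-adic GZ}(2) yields the claimed equality in $\mathscr{J}^{2}/\mathscr{J}^{3}$ with the correct normalising factor $-\mathrm{ord}_{p}(q_{A})^{-1}(1-p^{-1})^{-1}$. There is essentially no obstacle at this stage: the proof is a one-line deduction, and all the analytic and cohomological input has already been compressed into Theorems~\ref{maincohpad} and~\ref{p-adic GZ}. The only thing worth being careful about is that the identity $(\ref{eq:kato Main})$ was established in the larger ring $\mathscr{A}$, so one checks that reducing modulo $\mathscr{J}^{3}$ and then multiplying by $\log_{A}(\mathrm{res}_{p}(\zeta^{\mathrm{BK}}))$ commutes with the projection $\mathscr{J}^{2}\twoheadrightarrow\mathscr{J}^{2}/\mathscr{J}^{3}$, which is clear since the latter is $\Q_{p}$-linear.
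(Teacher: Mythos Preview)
Your proposal is correct and is precisely the argument given in the paper: the theorem is stated as an immediate consequence of Theorem~\ref{p-adic GZ}(2) applied to $\mathfrak{Z}=\bki_{\infty}$, together with the identity $(\ref{eq:kato Main})$. There is nothing to add.
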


\subsection{Derivatives of cohomology classes}\label{derivatives of cohomology section} With the notations of Section $\ref{the pairing}$,
Shapiro's Lemma gives a natural isomorphism of $\mathscr{R}$-modules 
\[
                H^{1}(\Q_{p},T^{-})\cong{}
                H^{1}_{\mathrm{Iw}}(\Q_{p,\infty},\T^{-})\otimes_{\R}\mathscr{R},
\]
under which the morphism $\xi_{\ast} : H^{1}(\Q_{p},T^{-})\fre{}H^{1}(\Q_{p},\Q_{p})$ induced by $T^{-}\twoheadrightarrow{}T_{\xi}^{-}\cong{}\Q_{p}$
(see $(\ref{eq:isoTxi})$) corresponds  to the $\mathscr{R}$-base change of  
$H^{1}_{\mathrm{Iw}}(\Q_{p,\infty},\T^{-})\fre{}H^{1}(\Q_{p},\Q_{p});$
$(\mathfrak{Z}_{n})\mapsto\mathfrak{Z}_{0,\psi}$.
Under this isomorphism,  $\mathcal{L}_{\T}(\cdot{},k,s)$
gives rise to a morphism of $\mathscr{R}$-modules
(denoted again by the same symbol)
\[
           \mathcal{L}_{\T}(\cdot{},k,s) : H^{1}(\Q_{p},T^{-})\lfre{}\mathscr{J}\subset{}\mathscr{M}^{\mathrm{reg}}.
\]
As usual, one writes $\mathcal{L}_{\T}(\cdot{},k,s) : H^{1}(\Q_{p},T)\fre{}\mathscr{J}$ also for the morphism induced by the projection $p^{-} : T\twoheadrightarrow{}T^{-}$.

Denote by $H^{1}(\Q_{p},T)^{o}\subset{}H^{1}(\Q_{p},T)$ the submodule consisting of classes 
$\mathfrak{Y}$ such that  $p^{-}(\mathfrak{Y})\in{}\mathscr{P}\cdot{}H^{1}(\Q_{p},T^{-})$.
Given $\mathfrak{Y}\in{}H^{1}(\Q_{p},T)^{o}$,
choose  $\mathfrak{Y}_{\varpi},\mathfrak{Y}_{\varsigma}\in{}H^{1}(\Q_{p},T^{-})$
s.t. $p^{-}(\mathfrak{Y})=\varpi\cdot{}\mathfrak{Y}_{\varpi}+\varsigma\cdot{}\mathfrak{Y}_{\varsigma}$,
write $\mathfrak{y}_{\varpi},\mathfrak{y}_{\varsigma}\in{}\mathrm{Hom}_{\mathrm{cont}}(\Q_{p}^{\ast},\Q_{p})$
for the images of $\mathfrak{Y}_{\varpi},\mathfrak{Y}_{\varsigma}$ under 
$\xi_{\ast}$
and define 
\[
         \mathrm{Der}_{\mathrm{wt}}(\mathfrak{Y}):=\log_{p}(\varpi)\cdot{}\mathfrak{y}_{\varpi}\big(e(1)\big);\ \ \ \ 
         \mathrm{Der}_{\mathrm{cyc}}(\mathfrak{Y}):=\log_{p}(\varsigma)\cdot{}\mathfrak{y}_{\varsigma}\big(p^{-1}\big);
\]
\[
     \mathrm{Der}_{\dag}(\mathfrak{Y}):=\log_{p}(\varpi)\cdot{}\mathfrak{y}_{\varpi}(p^{-1})
     -\frac{1}{2}\log_{p}(\varsigma)\cdot{}\mathscr{L}_p(A)\cdot{}\mathfrak{y}_{\varsigma}(e(1)),
\]
where $\log_{p}(\varpi):=\log_{p}(\gamma_{0})$ and $\log_{p}(\varsigma):=\log_{p}\big(\chi_{\mathrm{cyc}}(\sigma_{0})\big)$.
Note that, for $\ast\in\{\mathrm{wt},\mathrm{cyc},\dag\}$, the definition of $\mathrm{Der}_{\ast}(\mathfrak{Y})$
depends \emph{a priori} on the choice of the classes $\mathfrak{Y}_{\varpi}$ and $\mathfrak{Y}_{\varsigma}$.
That it is indeed independent of this choice is a consequence of the following 
corollary of Theorem $\ref{mainderalg}$ and the non-vanishing of 
$\mathscr{L}_{p}(A)$.

\begin{cor}\label{mainderalgdisg} For every $\mathfrak{Y}\in{}H^{1}(\Q_{p},T)^{o}$, we have
\[
             \lri{1-\frac{1}{p}}\mathcal{L}_{\T}(\mathfrak{Y},k,s)\equiv{}\mathrm{Der}_{\mathrm{cyc}}(\mathfrak{Y})\cdot{}(s-1)^{2}
             +\mathrm{Der}_{\dag}(\mathfrak{Y})\cdot{}(s-1)(k-2)-\frac{1}{2}\mathscr{L}_{p}(A)\cdot{}\mathrm{Der}_{\mathrm{wt}}(\mathfrak{Y})\cdot{}
             (k-2)^{2}\ \big(\mathrm{mod}\ \mathscr{J}^{3}\big).
\]
\end{cor}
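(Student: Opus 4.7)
The plan is to deduce this corollary as a direct consequence of Theorem $\ref{mainderalg}(1)$, applied separately to the classes $\mathfrak{Y}_\varpi$ and $\mathfrak{Y}_\varsigma$, by expanding the Mellin transforms of $\varpi$ and $\varsigma$ to first order at $(k,s)=(2,1)$.

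First I would use the $\overline{R}$-linearity of Ochiai's big dual exponential and the factorisation $\mathcal{L}_\T(\cdot,k,s)=\Mm\circ\mathcal{L}_\T$ (extended as noted to $\mathscr{R}\to\mathscr{M}^{\mathrm{reg}}$) to write
\[
\mathcal{L}_\T(\mathfrak{Y},k,s)=\Mm(\varpi)\cdot\mathcal{L}_\T(\mathfrak{Y}_\varpi,k,s)+\Mm(\varsigma)\cdot\mathcal{L}_\T(\mathfrak{Y}_\varsigma,k,s),
\]
after projecting via $p^-$. From the definition of the $\Ic$-algebra structure on $\mathscr{A}$ and the character $\Gamma\to\mathscr{A}_U$ recalled in Section $\ref{mellinhida}$, a direct power series expansion gives
\[
\Mm(\varpi)\equiv\log_p(\varpi)\cdot(k-2)\ \ (\mathrm{mod}\ \mathscr{J}^2);\ \ \Mm(\varsigma)\equiv\log_p(\varsigma)\cdot(s-1)\ \ (\mathrm{mod}\ \mathscr{J}^2).
\]
Both factors in each summand above therefore lie in $\mathscr{J}$, so to compute modulo $\mathscr{J}^3$ it suffices to know each $\mathcal{L}_\T(\mathfrak{Y}_*,k,s)$ modulo $\mathscr{J}^2$. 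This is precisely what Theorem $\ref{mainderalg}(1)$ provides, applied to the Iwasawa classes corresponding to $\mathfrak{Y}_\varpi$ and $\mathfrak{Y}_\varsigma$ under the Shapiro isomorphism of Section $\ref{derivatives of cohomology section}$: by construction their images under $\xi_\ast$ are $\mathfrak{y}_\varpi$ and $\mathfrak{y}_\varsigma$ respectively.

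Then I would substitute, multiply out, and collect the three quadratic monomials $(s-1)^2,(s-1)(k-2),(k-2)^2$ in $\mathscr{J}^2/\mathscr{J}^3$. The coefficient of $(s-1)^2$ becomes $\log_p(\varsigma)\mathfrak{y}_\varsigma(p^{-1})=\mathrm{Der}_{\mathrm{cyc}}(\mathfrak{Y})$; the coefficient of $(k-2)^2$ becomes $-\tfrac{1}{2}\mathscr{L}_p(A)\log_p(\varpi)\mathfrak{y}_\varpi(e(1))=-\tfrac{1}{2}\mathscr{L}_p(A)\mathrm{Der}_{\mathrm{wt}}(\mathfrak{Y})$; and the mixed coefficient $\log_p(\varpi)\mathfrak{y}_\varpi(p^{-1})-\tfrac{1}{2}\mathscr{L}_p(A)\log_p(\varsigma)\mathfrak{y}_\varsigma(e(1))$ is $\mathrm{Der}_{\dag}(\mathfrak{Y})$ by definition. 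This yields the displayed formula.

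The only conceptual point is the independence of $\mathrm{Der}_{\mathrm{cyc}}$, $\mathrm{Der}_{\mathrm{wt}}$, $\mathrm{Der}_{\dag}$ from the choice of decomposition $p^-(\mathfrak{Y})=\varpi\mathfrak{Y}_\varpi+\varsigma\mathfrak{Y}_\varsigma$; this is the step I expect to need the most care. The identity just derived makes the three quantities equal to the coefficients of a well-defined class in $\mathscr{J}^2/\mathscr{J}^3$ against the basis $\{(s-1)^2,(s-1)(k-2),(k-2)^2\}$, so each is uniquely determined by $\mathfrak{Y}$ itself once one divides the $(k-2)^2$-coefficient by $\mathscr{L}_p(A)$ — and here one finally invokes $\mathscr{L}_p(A)\neq 0$ (\cite{M-man}) to recover $\mathrm{Der}_{\mathrm{wt}}(\mathfrak{Y})$. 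This completes the proof and simultaneously justifies the well-posedness of the definitions given in Section $\ref{derivatives of cohomology section}$.
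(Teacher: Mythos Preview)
Your proposal is correct and follows essentially the same approach as the paper's proof, which is a single sentence invoking the linear terms of $\Mm(\varpi)$ and $\Mm(\varsigma)$, the $\mathscr{R}$-linearity of $\mathcal{L}_{\T}(\cdot,k,s)$ through $H^{1}(\Q_{p},T^{-})$, and Theorem~\ref{mainderalg}(1). Your added paragraph on well-definedness of the $\mathrm{Der}_{\ast}$ quantities via $\mathscr{L}_{p}(A)\neq 0$ is exactly what the paper records in the sentence preceding the corollary.
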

\begin{proof} As $\log_{p}(\varpi)(k-2)$ and $\log_{p}(\varsigma)(s-1)$
are the linear terms of $\Mm(\varpi)$ and $\Mm(\varsigma)$ respectively, and $\mathcal{L}_{\T}(\cdot{},k,s)$
factorises through an $\mathscr{R}$-linear map on $H^{1}(\Q_{p},T^{-})$, this is  a direct consequence  of Theorem $\ref{mainderalg}(1)$.
\end{proof}

\subsection{Proof of Theorem $\ref{p-adic GZ}$}\label{proof of p-adic GZ}
Part 1 of the theorem follows by combining 
Theorem $\ref{mainderalg}(2)$ with Theorem $\ref{mainreg}(2)$.  We then concentrate 
on the proof of Part 2 in the rest of this section. \vspace{1mm}

\subsubsection*{Notations} With the notations of Section $\ref{Selmer complexes}$, set
$\scob(M):=\scob(\gaun,M)$.
Write 
 $\widetilde{x}=(x,x^{+},y)$ for an $n$-cochain of $\scob(M)$, where 
$x\in{}\cts^{n}(\gaun,M)$, $x^{+}\in{}\cts^{n}(\Q_{p},M^{+})$ and $y=(y_{l})_{l|Np}\in{}\bigoplus_{l|Np}\cts^{n-1}(\Q_{l},M)$.
Denote by $\widetilde{d}$ the differentials of $\scob(M)$, so that 
$\widetilde{d}(\widetilde{x})=(d(x),d(x^{+}),i^{+}(x^{+})-\mathrm{res}_{Np}(x)-d(y))$,
where the  $d$'s are   the differentials of $\ctsb(-,-)$. 
Write $\xi_{\ast} : \scob(T)\fre{}\scob(V_{p}(A))$,
$\xi_{\ast} : \ctsb(\gaun,T)\fre{}\ctsb(\gaun,V_{p}(A))$ and 
$\xi_{\ast} : \ctsb(\Q_{p},T^{?})\fre{}\ctsb(\Q_{p},V_{p}(A)^{?})$ (with $?\in{}\{\emptyset,\pm\}$)
to denote the morphisms induced on cochains 
by $T\twoheadrightarrow{}T_{\xi}\cong{}V_{p}(A)$ $(\ref{eq:isoTxi})$.
Finally, write $\derco(M):=\derco(\Q,M)$
and $\exsel^{\ast}(M):=\exsel^{\ast}(\Q,M)$.\vspace{1mm}

\subsubsection{A description of $\widetilde{\beta}_{p}$} In order to prove the theorem,
we need a more concrete description of the Bockstein map 
$\widetilde{\beta}_{p}$. This is addressed in the following  lemma.

\begin{lemma}\label{explicit bockstein} Let $\widetilde{x}\in{}\sco^{1}(V_{p}(A))$ be a $1$-cocycle, and  let 
$\widetilde{X}\in{}\sco^{1}(T)$ and  $\widetilde{Y}_{\varpi},\widetilde{Y}_{\varsigma}\in{}\sco^{2}(T)$ be cochains such that:

$(a)$ $\xi_{\ast}(\widetilde{X})=\widetilde{x}{}$;\vspace{1mm}

$(b)$    $\widetilde{d}(\widetilde{X})=\varpi\cdot{}\widetilde{Y}_{\varpi}+\varsigma\cdot{}\widetilde{Y}_{\varsigma}$.\\
Then $\widetilde{y}_{\varpi}:=\xi_{\ast}(\widetilde{Y}_{\varpi})$ and $\widetilde{y}_{\varsigma}:=\xi_{\ast}(\widetilde{Y}_{\varsigma})$
are $2$-cocycles of $\scob(V_{p}(A))$ and 
\[
           -\bock([\widetilde{x}])=[\widetilde{y}_{\varpi}]\otimes{}\{\varpi\}
           +[\widetilde{y}_{\varsigma}]\otimes{}\{\varsigma\}\in{}
           \exsel^{2}(V_{p}(A))\otimes_{\Q_{p}}\mathscr{P}/\mathscr{P}^{2}
\]
(where $[\star]$ denotes the cohomology class of   $\star$, and $\{\cdot{}\} : \mathscr{P}\twoheadrightarrow{}\mathscr{P}/\mathscr{P}^{2}$
the projection). 
\end{lemma}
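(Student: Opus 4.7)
The plan is to realise $\widetilde{\beta}_p$ as the connecting morphism associated with an honest short exact sequence of cochain complexes, and then to apply the standard diagram-chasing recipe. Concretely, I would tensor the underlying short exact sequence of $(\ref{eq:exact delta Bockstein})$ with the Selmer complex $\scob(T)$ to produce
\begin{equation*}
0 \to \mathscr{P}/\mathscr{P}^2 \otimes_{\Q_p} \scob(V_p(A)) \to \scob(T)/\mathscr{P}^2\scob(T) \xrightarrow{\xi_*} \scob(V_p(A)) \to 0,
\end{equation*}
whose boundary map in cohomology is, by construction of $\widetilde{\beta}_p$ via derived tensor product and up to the standard sign introduced by the shift functor, equal to $-\widetilde{\beta}_p$. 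The identification of the leftmost term uses the base-change compatibility already invoked to obtain $(\ref{eq:control Selmer complexes})$: since $(\varpi,\varsigma)$ is a regular system of parameters in the regular local ring $\mathscr{R}$ and the $\T^{?}$ are $R$-free, the natural map sending $\{\varpi\}\otimes\widetilde{z}$ to the class of $\varpi\widetilde{Z}$, for any lift $\widetilde{Z}$ of $\widetilde{z}$, identifies $\mathscr{P}/\mathscr{P}^2 \otimes_{\Q_p} \scob(V_p(A))$ with $\mathscr{P}\scob(T)/\mathscr{P}^2\scob(T)$, and analogously for $\{\varsigma\}$.

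Once this short exact sequence is in place, the remainder is the classical recipe for a connecting morphism. The reduction mod $\mathscr{P}^2$ of the $\widetilde{X}$ of hypothesis $(a)$ provides a lift of $\widetilde{x}$ to $\scob(T)/\mathscr{P}^2$. Its differential $\widetilde{d}\widetilde{X}$ satisfies $\xi_*\widetilde{d}\widetilde{X} = \widetilde{d}\widetilde{x} = 0$, so lies in $\mathscr{P}\scob(T)$, and hypothesis $(b)$ displays it explicitly as $\varpi\widetilde{Y}_\varpi + \varsigma\widetilde{Y}_\varsigma$. Under the identification above, its image in $\mathscr{P}/\mathscr{P}^2 \otimes_{\Q_p} \sco^2(V_p(A))$ is $\{\varpi\}\otimes\widetilde{y}_\varpi + \{\varsigma\}\otimes\widetilde{y}_\varsigma$. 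To verify that $\widetilde{y}_\varpi$ and $\widetilde{y}_\varsigma$ are $2$-cocycles, I would apply $\widetilde{d}$ to the identity $\widetilde{d}\widetilde{X} = \varpi\widetilde{Y}_\varpi + \varsigma\widetilde{Y}_\varsigma$ to get $\varpi\widetilde{d}\widetilde{Y}_\varpi + \varsigma\widetilde{d}\widetilde{Y}_\varsigma = 0$ in $\sco^3(T)$; reducing modulo $\mathscr{P}^2$ yields $\{\varpi\}\otimes\widetilde{d}\widetilde{y}_\varpi + \{\varsigma\}\otimes\widetilde{d}\widetilde{y}_\varsigma = 0$, and the $\Q_p$-linear independence of $\{\varpi\},\{\varsigma\}$ in $\mathscr{P}/\mathscr{P}^2$ forces both differentials to vanish. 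The standard recipe then gives $-\widetilde{\beta}_p([\widetilde{x}]) = [\widetilde{y}_\varpi]\otimes\{\varpi\} + [\widetilde{y}_\varsigma]\otimes\{\varsigma\}$, as claimed.

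The main obstacle is bookkeeping rather than mathematical depth, split between two points. First, one must check carefully that $\scob(T)$ has enough flatness over $\mathscr{R}$ for the derived tensor products to be computed on the nose as ordinary quotient complexes; this is the same mechanism that underlies the isomorphisms in $(\ref{eq:control Selmer complexes})$ and reduces to the standard behaviour of continuous cochains of $\gaun$, $G_p$ and $G_l$ ($l\mid N$) under quotients of a Noetherian base. Second, one must pin down the global sign: the minus sign in the statement reflects the convention that identifies the boundary in a distinguished triangle $A\to B\to C\to A[1]$ with the negative of the connecting homomorphism of the underlying short exact sequence of complexes, and tracing this through the definition of $\partial_\xi$ in $(\ref{eq:exact delta Bockstein})$ is where one has to be careful.
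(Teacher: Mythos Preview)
Your approach is correct and is a genuine alternative to the paper's. The paper computes $\widetilde{\boldsymbol{\beta}}_p$ by resolving $\Q_p$ over $\mathscr{R}$ via the Koszul complex $K_\bullet=K_\bullet(\varpi,\varsigma)$, tensoring $\scob(T)$ with the explicit morphism $\widehat{\partial_\xi}:K_\bullet\to K_\bullet^2[1]$ representing $\partial_\xi$, and then tracking a concrete $1$-cocycle $\mathbf{X}=(\widetilde{U},(-\widetilde{Y}_\varpi,-\widetilde{Y}_\varsigma),\widetilde{X})$ through the resulting maps. This buys them the following: since $K_\bullet$ is a bounded complex of free $\mathscr{R}$-modules, the derived tensor products are computed on the nose without any flatness hypothesis on $\scob(T)$ itself, and the sign comes out automatically from the explicit formula for $\widehat{\partial_\xi}$. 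Your route via the short exact sequence $0\to\mathscr{P}\scob(T)/\mathscr{P}^2\to\scob(T)/\mathscr{P}^2\to\scob(V_p(A))\to0$ is more direct and recovers the same recipe, but you must feed in the Tor-vanishing $\mathrm{Tor}_1^{\mathscr{R}}(\sco^n(T),\Q_p)=0$ to get both the identification $\mathscr{P}\scob(T)/\mathscr{P}^2\cong\mathscr{P}/\mathscr{P}^2\otimes_{\Q_p}\scob(V_p(A))$ and the agreement with the \emph{derived} Bockstein; you correctly flag this, and it is indeed furnished by the same base-change mechanism (Nekov\'a\v{r}'s Proposition 3.4.2) invoked for $(\ref{eq:control Selmer complexes})$. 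Your cocycle verification and the sign discussion are fine; note that the paper extracts slightly more from $(b)$, namely $\widetilde{d}\widetilde{Y}_\varpi=\varsigma\widetilde{U}$ and $\widetilde{d}\widetilde{Y}_\varsigma=-\varpi\widetilde{U}$ for a common $\widetilde{U}$, which is exactly what is needed to build the Koszul-level cocycle $\mathbf{X}$ but is unnecessary in your formulation.
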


\begin{proof} Consider the complex of $\mathscr{R}$-modules, concentrated in degrees $(-2,0)$:
\[
                  K_{\bullet}:=K_{\bullet}(\varpi,\varsigma) : \mathscr{R}\lfre{d_{2}}\mathscr{R}\oplus\mathscr{R}\lfre{d_{1}}\mathscr{R},
\] 
where $d_{2}(r)=(-r\varsigma,r\varpi)$ and $d_{1}(r,s)=r\varpi+s\varsigma$.
It is the Koszul complex of the $\mathscr{R}$-sequence $(\varpi,\varsigma)$ generating $\mathscr{P}$.
Note that the morphism $\xi$ in degree zero defines a quasi-isomorphism $\xi : K_{\bullet}\fre{}\Q_{p}$.
Similarly, one has a quasi-isomorphism $\xi^{\prime} : K_{\bullet}^{2}\fre{}\Q_{p}^{2}\cong{}\mathscr{P}/\mathscr{P}^{2}$,
defined in degree zero by $\xi^{\prime}(r,s)=\xi(r)\{\varpi\}+\xi(s)\{\varsigma\}$.
It is then easily verified that there is a commutative diagram in $\mathrm{D}(\mathscr{R})$:
\begin{equation}\label{eq:connecting Bockstein}
           \xymatrix{          K_{\bullet} \ar[d]_{\xi}\ar[rr]^{\widehat{\partial_{\xi}}\ \ }   &&  K_{\bullet}^{2}[1] \ar[d]^{\xi^{\prime}[1]}\\
                                         \Q_{p}  \ar[rr]^{\partial_{\xi}\ \  } && \mathscr{P}/\mathscr{P}^{2}[1],  
                                                      }
\end{equation}   
where $\partial_{\xi}$ is the morphism which appears  in the exact triangle $(\ref{eq:exact delta Bockstein})$ and $\widehat{\partial_{\xi}}$ is 
the morphism of complexes
\[
      \xymatrix{   &&  \mathscr{R} \ar[rr]^{d_{2}} \ar[d]_{\mu} && \mathscr{R}^{2} \ar[rr]^{d_{1}}\ar@{=}[d] && \mathscr{R} 
                          \\
                        \mathscr{R}^{2} \ar[rr]^{-d_{2}\oplus{}-d_{2}} && \mathscr{R}^{4} 
                        \ar[rr]^{-d_{1}\oplus{}-d_{1}} && \mathscr{R}^{2} & 
                                     }
\]
with  $\mu(r):=(0,r,-r,0)$ for every $r\in{}\mathscr{R}$.

As   $K_{\bullet }\cong{}\Q_{p}$ in $\mathrm{D}(\mathscr{R})$ and $K_{\bullet}$ is a complex of free $\mathscr{R}$-modules,
there are  functorial isomorphisms in $\mathrm{D}(\mathscr{R})$:
\begin{equation}\label{eq:derived tensor}
               C\otimes_{\mathscr{R}}K_{\bullet}\cong{}C\derot{\mathscr{R},\xi}\Q_{p};\ \ \ \ 
               C\otimes_{\mathscr{R}}K_{\bullet}^{2}\cong{}C\derot{\mathscr{R}}\mathscr{P}/\mathscr{P}^{2}
\end{equation}
for every cohomologically bounded complex  $C\in{}\mathrm{D}(\mathscr{R})^{b}$.
Since $T$ and $T^{\pm}$ are free $\mathscr{R}$-modules, the natural projection 
$K_{\bullet}\fre{}\mathscr{R}/\mathscr{P}$ (in degree zero) induces a  quasi-isomorphism 
\begin{equation}\label{eq:????}
                \scob(T)\otimes_{\mathscr{R}}K_{\bullet}\lfre{\mathrm{qis}}\scob(T)\otimes_{\mathscr{R}}\mathscr{R}/\mathscr{P}.
\end{equation}
The complex  on the right  is isomorphic to $\scob(T_{\xi})\cong{}\scob(V_{p}(A))$, as follows  from \cite[Proposition 3.4.2]{Ne}.
Then $\xi_{\ast} : \scob(T)\twoheadrightarrow{}\scob(V_{p}(A))$ and $(\ref{eq:????})$ define a quasi-isomorphism 
\[
                      \Xi : \scob(T)\otimes_{\mathscr{R}}K_{\bullet}\lfre{\mathrm{qis}}\scob(V_{p}(A)),
\]
inducing via $(\ref{eq:derived tensor})$  the first isomorphism in $(\ref{eq:control Selmer complexes})$.
Similarly, consider the quasi-isomorphism  
\[
        \Xi^{\prime}  : \scob(T)\otimes_{\mathscr{R}}K_{\bullet}^{2}\lfre{\Xi^{2}}
        \scob(V_{p}(A))\otimes_{\Q_{p}}\Q_{p}^{2}\cong{}\scob(V_{p}(A))\otimes_{\Q_{p}}\mathscr{P}/\mathscr{P}^{2}.
\]
The second isomorphism displayed in $(\ref{eq:control Selmer complexes})$ is then induced  by
$\Xi^{\prime}$ via $(\ref{eq:derived tensor})$.

Together with  $(\ref{eq:connecting Bockstein})$, the preceding discussion describes the morphism $\widetilde{\beta}_{p}$ as the composition
\begin{equation}\label{eq:explicit Bockstein}
          \widetilde{\beta}_{p}  : 
          \exsel^{1}(V_{p}(A))\lfre{\Xi_{1}^{-1}}
          H^{1}\lri{\scob(T)\otimes_{\mathscr{R}}K_{\bullet}}
          \lfre{\lri{\mathrm{id}\otimes\widehat{\partial_{\xi}}}_{1}}
          H^{2}\lri{\scob(T)\otimes_{\mathscr{R}}K_{\bullet}^{2}}
          \lfre{\Xi^{\prime}_{2}}\exsel^{2}(V_{p}(A))\otimes_{\Q_{p}}\mathscr{P}/\mathscr{P}^{2},
\end{equation}
where $(\cdot{})_{n}:=H^{n}(\cdot)$.  
Take now $\widetilde{x},\widetilde{X},\widetilde{Y}_{\varpi}$ and $\widetilde{Y}_{\varsigma}$ as in the statement. 
The relation $(b)$ gives $\varpi\cdot{}\widetilde{d}(\widetilde{Y}_{\varpi})=-\varsigma\cdot{}\widetilde{d}(\widetilde{Y}_{\varsigma})$.
This easily implies that $\widetilde{d}(\widetilde{Y}_{\varpi})=\varsigma\cdot{}\widetilde{U}$ and 
$\widetilde{d}(\widetilde{Y}_{\varsigma})=-\varpi\cdot{}\widetilde{U}$,
for a $3$-cocycle $\widetilde{U}\in{}\sco^{3}(T)$. Then $(b)$ tells us  that 
\[
          \mathbf{X}:=\Big(\widetilde{U},\big(-\widetilde{Y}_{\varpi},-\widetilde{Y}_{\varsigma}\big),\widetilde{X}\Big)\in{}
          \sco^{3}(T)\oplus{}\sco^{2}(T)^{2}\oplus{}\sco^{1}(T)=\Big(\scob(T)\otimes_{\mathscr{R}}K_{\bullet}\Big)^{1}
\]
is a $1$-cocycle,  and by $(a)$: $\Xi_{1}([\mathbf{X}])=[\xi_{\ast}(\widetilde{X})]=[\widetilde{x}]$.
Applying  $\big(\mathrm{id}\otimes{}\widehat{\partial_{\xi}}\big)_{1}$ to $\mathbf{X}$ we obtain the $2$-cocycle 
\[
          \mathbf{Y}:=\Big(\big(0,\widetilde{U},-\widetilde{U},0\big),\big(-\widetilde{Y}_{\varpi},-\widetilde{Y}_{\varsigma}\big)\Big)\in{}\sco^{3}(T)^{4}\oplus{}\sco^{2}(T)^{2}
          \subset{}\Big(\scob(T)\otimes_{\mathscr{R}}K_{\bullet}^{2}\Big)^{2}.
\]
By equation $(\ref{eq:explicit Bockstein})$  
one has $\widetilde{\beta}_{p}([\widetilde{x}])=\Xi^{\prime}_{2}\big([\mathbf{Y}]\big)
=\big[\xi_{\ast}\big(-\widetilde{Y}_{\varpi}\big)\big]\otimes\{\varpi\}+\big[\xi_{\ast}\big(-\widetilde{Y}_{\varsigma}
\big)\big]\otimes\{\varsigma\}$,
as was to be shown.
\end{proof}

\subsubsection{Proof of Part 2 of Theorem $\ref{p-adic GZ}$}
Let us begin with two simple lemmas.

\begin{lemma}\label{ISOH2} $1.$ The natural projections induce isomorphisms 
\[
            H^{1}(\Q_{p},T^{-})/\varpi\cong{}H^{1}(\Q_{p},T^{-}/\varpi);\ \ H^{1}(\Q_{p},T^{-})/\varsigma\cong{}H^{1}(\Q_{p},T^{-}/\varsigma). 
\]

$2.$ $\xi_{\ast}$ induces an isomorphism 
$H^{1}(\Q_{p},T^{-})\otimes_{\mathscr{R}}\mathscr{R}/\mathscr{P}\cong{}H^{1}(\Q_{p},\Q_{p})$.
\end{lemma}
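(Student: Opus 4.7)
The plan is to reduce both parts to the single vanishing statement $H^2(\Q_p, T^-) = 0$, after which everything follows by routine Koszul-style devissage applied to the regular sequence $(\varpi, \varsigma)$ generating $\mathscr{P}$.

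For Part 1, I would first note that $T^-$ is a free $\mathscr{R}$-module of rank one, so $\varpi$ and $\varsigma$ are non-zero-divisors on $T^-$. The short exact sequence of $\mathscr{R}[G_p]$-modules $0 \to T^- \lfre{\cdot\varpi} T^- \to T^-/\varpi \to 0$ then yields in continuous $G_p$-cohomology the four-term exact sequence
\[
0 \lfre{} H^1(\Q_p, T^-)/\varpi \lfre{} H^1(\Q_p, T^-/\varpi) \lfre{} H^2(\Q_p, T^-)[\varpi] \lfre{} 0,
\]
and an identical sequence with $\varsigma$ in place of $\varpi$. Both isomorphisms in Part 1 therefore reduce to the vanishing of $H^2(\Q_p, T^-)[\varpi]$ and $H^2(\Q_p, T^-)[\varsigma]$, which will be implied by $H^2(\Q_p, T^-) = 0$.

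To establish this latter vanishing I would invoke Shapiro's lemma to identify $H^\ast(\Q_p, T^-)$ with the localisation at $\overline{\mathfrak{p}}$ of the Iwasawa cohomology $H^\ast_{\mathrm{Iw}}(\Q_{p,\infty}, \T^-)$, and then use local Tate duality in its Iwasawa-theoretic incarnation: $H^2_{\mathrm{Iw}}(\Q_{p,\infty}, \T^-)$ is Pontryagin dual to $H^0(G_{\Q_{p,\infty}}, \mathrm{Hom}_{\Z_p}(\T^-, \mu_{p^\infty}))$, i.e.\ to the $G_{\Q_{p,\infty}}$-invariants of the character $\widetilde{\mathbf{a}}_p^{-1}\chi_{\mathrm{cyc}}$. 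A generator of $I_p$ whose $\chi_{\mathrm{cyc}}$-image is a Teichm\"uller lift of order $p-1$ remains nontrivial for this character, so these invariants are annihilated by an element of the appropriate augmentation ideal; the cyclotomic twist by $\chi_\infty^{-1}$ built into the passage from $\T^-$ to $T^- = \T^- \otimes_R \R(\chi_\infty^{-1}) \otimes_{\R} \mathscr{R}$, combined with the localisation at $\overline{\mathfrak{p}}$, then makes such an element a unit and forces $H^2(\Q_p, T^-) = 0$.

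For Part 2 I apply Part 1 twice in succession: first modding out $\varpi$ to obtain $H^1(\Q_p, T^-)/\varpi \cong H^1(\Q_p, T^-/\varpi)$, and then modding out $\varsigma$ via the same devissage applied to the $\mathscr{R}/\varpi$-adic representation $T^-/\varpi$ (the requisite vanishing $H^2(\Q_p, T^-/\varpi) = 0$ is again a consequence of $H^2(\Q_p, T^-) = 0$ through the analogous four-term sequence). Composing with the canonical identification $T^-/\mathscr{P} \cong T^-_\xi \cong \Q_p$ supplied by $\xi$ and $(\ref{eq:filtration T})$ yields $H^1(\Q_p, T^-) \otimes_{\mathscr{R}} \mathscr{R}/\mathscr{P} \cong H^1(\Q_p, \Q_p)$, as required. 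The principal obstacle is not the bookkeeping, which is standard, but the precise formulation of local duality for the non-finite $\mathscr{R}$-adic module $T^-$ and the verification that the cyclotomic twist $\chi_\infty^{-1}$ suffices to kill the $H^0$-term dual to $H^2$, notwithstanding that $\widetilde{\mathbf{a}}_p$ specialises under $\psi$ to the unit $a_p(A) = +1$.
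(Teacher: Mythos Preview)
Your reduction of both parts to the vanishing $H^{2}(\Q_{p},T^{-})=0$ is exactly right, and your d\'evissage for Part~2 is fine. However, your argument for this vanishing via Iwasawa-theoretic duality, while sound in outline, is garbled in the details and takes an unnecessarily heavy route compared with the paper.

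Concretely: you correctly identify (after Shapiro and local Tate duality) that $H^{2}_{\mathrm{Iw}}(\Q_{p,\infty},\T^{-})$ is Pontryagin dual to $H^{0}\big(\Q_{p,\infty},(\T^{-})^{\vee}(1)\big)$, with Galois acting through $\widetilde{\mathbf{a}}_{p}^{-1}\chi_{\mathrm{cyc}}$. Choosing $g\in I_{p}\cap G_{\Q_{p,\infty}}$ with $\chi_{\mathrm{cyc}}(g)=\zeta$ a primitive $(p-1)$-th root of unity, the invariants are annihilated by $\zeta-1$. But $\zeta-1$ is already a \emph{unit} in $\Z_{p}$ (since $\prod_{\zeta\neq 1}(1-\zeta)=p-1\in\Z_{p}^{\ast}$), so the invariants vanish outright. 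Your remarks about this element lying in ``the appropriate augmentation ideal'' and needing the twist by $\chi_{\infty}^{-1}$ or the localisation at $\overline{\mathfrak{p}}$ to make it a unit are simply wrong: none of that is needed.

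The paper's route is considerably lighter. It never invokes Shapiro or Iwasawa duality; instead it bootstraps from the elementary fact $H^{2}(\Q_{p},\Q_{p})=0$ via two applications of Nakayama's Lemma. Since $(T^{-}/\varpi)/\varsigma\cong\Q_{p}$, one gets $H^{2}(\Q_{p},T^{-}/\varpi)/\varsigma\hookrightarrow H^{2}(\Q_{p},\Q_{p})=0$, hence $H^{2}(\Q_{p},T^{-}/\varpi)=0$ by Nakayama; the long exact sequence for $\varpi$ then gives $H^{2}(\Q_{p},T^{-})/\varpi=0$, and Nakayama again gives $H^{2}(\Q_{p},T^{-})=0$. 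This avoids all the machinery you invoke and sidesteps any worry about formulating duality for $\mathscr{R}$-adic coefficients. Your approach has the virtue of explaining the vanishing ``from first principles'' via the Galois action, but for the lemma at hand the paper's Nakayama argument is both shorter and more robust.
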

\begin{proof} $1.$ We prove the first isomorphism, the other being similar. 
Since  $(T^{-}/\varpi)/\varsigma=T^{-}/\mathscr{P}\cong{}\Q_{p}$,   
$H^{2}(\Q_{p},T^{-}/\varpi)/\varsigma$ is a submodule of $H^{2}(\Q_{p},\Q_{p})=0$,
hence  $H^{2}(\Q_{p},T^{-}/\varpi)=0$ by Nakayama's Lemma.
We have short exact sequences 
\[
        0\lfre{}H^{q}(\Q_{p},T^{-})/\varpi\lfre{}H^{q}(\Q_{p},T^{-}/\varpi)\lfre{}H^{q+1}(\Q_{p},T^{-})[\varpi]\lfre{}0.
\]
Taking $q=2$ yields  $H^{2}(\Q_{p},T^{-})/\varpi=0$, and then $H^{2}(\Q_{p},T^{-})=0$
by another application of Nakayama's Lemma.
Taking now  $q=1$ in the exact sequence above, one finds $H^{1}(\Q_{p},T^{-})/\varpi\cong{}H^{1}(\Q_{p},T^{-}/\varpi)$.


$2.$ By an argument similar to that proving Part 1, the vanishing of $H^{2}(\Q_{p},\Q_{p})$ implies that 
$H^{1}(\Q_{p},T^{-}/\varpi)/\varsigma$ is isomorphic to $H^{1}(\Q_{p},\Q_{p})$. Together with 
Part 1 this concludes the proof.
\end{proof}

Taking $\mathscr{S}=\Q_{p}$ and $M=V_{p}(A)$ in $(\ref{eq:long nekovar})$
(so that $M^{-}=\Q_{p}$),
one can extract from the long exact sequence a morphism 
$\jmath : \mathrm{Hom}_{\mathrm{cont}}(\Q_{p}^{\ast},\Q_{p})=H^{1}(\Q_{p},\Q_{p})\fre{}\exsel^{2}(V_{p}(A))$.
We recall also  the morphism $\wp^{+} : \exsel^{1}(V_{p}(A))\fre{}H^{1}(\Q_{p},\Q_{p}(1))=\Q_{p}^{\ast}\widehat{\otimes}\Q_{p}$
introduced in Section $\ref{Extended Selmer Section}$.

\begin{lemma}\label{adjoint nekovar pairing} For every $\boldsymbol{x}\in{}\exsel^{1}(V_{p}(A))$
and every  $\kappa\in{}H^{1}(\Q_{p},\Q_{p})$:
\[
             \dia{\jmath(\kappa),\boldsymbol{x}}_{\mathrm{Nek}}=-\kappa\big(\wp^{+}(\boldsymbol{x})\big).
\]
\end{lemma}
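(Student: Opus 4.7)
The statement is an instance of the adjunction between connecting morphisms and cup products in \neko's Selmer complex formalism, applied to the short exact sequence $0\to V_{p}(A)^{+}\to V_{p}(A)\to V_{p}(A)^{-}\to 0$ and the Weil pairing $W$, which makes this filtration self-dual.

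I would begin by representing $\kappa$ by a continuous $1$-cocycle $\widetilde\kappa\in{}\cts^{1}(\Q_{p},\Q_{p})$ and picking a continuous lift $\overline{\kappa}\in{}\cts^{1}(\Q_{p},V_{p}(A))$ with $p^{-}(\overline{\kappa})=\widetilde{\kappa}$; then $\eta:=d\overline{\kappa}$ automatically takes values in $V_{p}(A)^{+}=\Q_{p}(1)$, since $p^{-}(\eta)=d\widetilde{\kappa}=0$. Unwinding the exact triangle underlying the long exact sequence $(\ref{eq:long nekovar})$, the class $\jmath(\kappa)$ is represented by a Selmer $2$-cocycle $\widetilde{K}=(0,\pm\eta,\pm\overline{\kappa}|_{p})\in{}\sco^{2}(V_{p}(A))$ whose only nonzero local component is at $p$. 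Representing $\boldsymbol{x}$ by a Selmer $1$-cocycle $\widetilde{X}=(X,X^{+},Y)\in{}\sco^{1}(V_{p}(A))$, one has $\wp^{+}(\boldsymbol{x})=[X^{+}]\in{}H^{1}(\Q_{p},\Q_{p}(1))$.

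Next I would compute the cup product $\widetilde{K}\cup_{W}\widetilde{X}$ using \neko's explicit formula for Selmer cochains (Sections 3.4 and 6 of \cite{Ne}), with coefficient pairing given by $W$. Because the global component of $\widetilde{K}$ vanishes and because $W$ annihilates $V_{p}(A)^{+}\otimes{}V_{p}(A)^{+}$ (the subspace $V_{p}(A)^{+}$ being one-dimensional and $W$ alternating), every term in the formula vanishes except one: a $2$-cochain supported at $p$ and equal to the local cup product of $\overline\kappa$ and $X^{+}$ under $W$. By the normalisation of Remark $\ref{normalisation pairing}$, $W(\overline\kappa,i^{+}(X^{+}))=p^{-}(\overline\kappa)\cdot{}X^{+}=\widetilde{\kappa}\cdot{}X^{+}$, so the resulting class in $H^{3}_{c,\mathrm{cont}}(\Q,\Q_{p}(1))$ is identified, via its natural isomorphism with $H^{2}(\Q_{p},\Q_{p}(1))\stackrel{\mathrm{inv}_{p}}{\cong}\Q_{p}$, with $\pm\mathrm{inv}_{p}(\widetilde{\kappa}\cup{}X^{+})$, the cup being taken with respect to the tautological pairing $\Q_{p}\otimes_{\Q_{p}}\Q_{p}(1)\to\Q_{p}(1)$.

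Finally, the local class field theoretic identity $\mathrm{inv}_{p}(\phi\cup{}u)=-\phi(u)$ already used in the proof of Corollary $\ref{derivative coleman}$ (for $\phi\in{}H^{1}(\Q_{p},\Q_{p})=\mathrm{Hom}_{\mathrm{cont}}(\Q_{p}^{\ast},\Q_{p})$ and $u\in{}H^{1}(\Q_{p},\Q_{p}(1))=\Q_{p}^{\ast}\widehat{\otimes}\Q_{p}$), applied to $\phi=\kappa$ and $u=\wp^{+}(\boldsymbol{x})$, would yield exactly $-\kappa(\wp^{+}(\boldsymbol{x}))$. The main obstacle is the sign bookkeeping: the structural reduction to a single local cup product at $p$ is immediate once the two vanishings above are observed, but verifying that the combined signs coming from the connecting morphism defining $\jmath$ and from \neko's Selmer cup-product formula collapse to $-1$ rather than $+1$ requires a careful pass through the conventions of \cite[\S 3, \S 5, \S 6]{Ne}.
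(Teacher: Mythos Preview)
Your proposal is correct and follows essentially the same route as the paper: lift $\kappa$ to a $V_{p}(A)$-valued cochain, identify $\jmath(\kappa)$ with the Selmer $2$-cocycle $(0,c(\kappa)^{+},\hat\kappa)$ whose only nontrivial local component is at $p$, apply \neko's cup-product formula to reduce to $\mathrm{inv}_{p}(\hat\kappa\cup_{W}i^{+}(x^{+}))$, use the normalisation of Remark~\ref{normalisation pairing} to rewrite this as $\mathrm{inv}_{p}(\kappa\cup\wp^{+}(\boldsymbol{x}))$, and conclude by the local class field theory identity already used in Corollary~\ref{derivative coleman}. The paper does not carry out the sign verification you flag as the main obstacle either; it simply cites \cite[Section 6.3]{Ne} for the cup-product formula and states the result, so your caution about the $\pm$ is honest but in the end the signs collapse to $-1$ as claimed.
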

\begin{proof} Let $\hat{\kappa}\in{}C^{1}_{\mathrm{cont}}(\Q_{p},V_{p}(A))$ be a $1$-cochain 
lifting  $\kappa$ under  $p^{-}_{\ast} : \ctsb(\Q_{p},V_{p}(A))\fre{}\ctsb(\Q_{p},\Q_{p})$ and let 
$d\hat{\kappa}=i^{+}(c(\kappa)^{+})$, for a $2$-cocycle $c(\kappa)^{+}\in{}C^{2}_{\mathrm{cont}}(\Q_{p},\Q_{p}(1))$.
By construction
\begin{equation}\label{eq:to cite in proof}
         \jmath(\kappa)=\big[\big(0,c(\kappa)^{+},\hat{\kappa}\big)\big]\in{}\exsel^{2}(V_{p}(A)).
\end{equation}     
Let $(x,x^{+},y)\in{}\sco^{1}(V_{p}(A))$
be a $1$-cocycle representing $\boldsymbol{x}$, so  $\wp^{+}(\boldsymbol{x})$
is represented by  $x^{+}\in{}\cts^{1}(\Q_{p},\Q_{p}(1))$.
The definition of  $\dia{-,-}_{\mathrm{Nek}}$ in \cite[Section 6.3]{Ne} yields
\[
         \dia{\jmath(\kappa),\boldsymbol{x}}_{\mathrm{Nek}}
         =\mathrm{inv}_{p}\big(\left[\hat{\kappa}\cup_{W}i^{+}(x^{+})\right]\big)
         =\mathrm{inv}_{p}\big(\kappa\cup{}\wp^{+}(\boldsymbol{x})\big)=-\kappa\big(\wp^{+}(\boldsymbol{x})\big).
\]
Here $\cup_{W} : \ctsb(\Q_{p},V_{p}(A))\otimes_{\Q_{p}}\ctsb(\Q_{p},V_{p}(A))
\fre{}\ctsb(\Q_{p},\Q_{p}(1))$ is the cup-product induced by the  Weil pairing $W$,
and $\mathrm{inv}_{p} : H^{2}(\Q_{p},\Q_{p}(1))\cong{}\Q_{p}$ is the  invariant map.
The second equality follows from Remark $\ref{normalisation pairing}$, while the last equality is a consequence of 
local class field theory \cite{Ser}.
\end{proof}

We are now ready to begin the actual proof of Part 2 of Theorem $\ref{p-adic GZ}$. Let 
$\mathfrak{Z}=(\mathfrak{Z}_{n})\in{}H^{1}_{\mathrm{Iw}}(\Q_{\infty},\T)$,
let $\mathfrak{z}:=\mathfrak{Z}_{0,\psi}$ and assume that 
$\mathfrak{z}\in{}H^{1}_{f}(\Q,V_{p}(A))$.
As in Section $\ref{derivatives of cohomology section}$, Shapiro's Lemma gives a natural isomorphism  
\[
                H^{1}(\gaun,T)\cong{}H^{1}_{\mathrm{Iw}}(\Q_{\infty},\T)\otimes_{\R}\mathscr{R}.
\]
Write again $\mathfrak{Z}\in{}H^{1}(\gaun,T)$
for the class corresponding to  $\mathfrak{Z}\otimes{}1\in{}H^{1}_{\mathrm{Iw}}(\Q_{\infty},\T)\otimes_{\R}\mathscr{R}$
under this isomorphism,
which satisfies $\mathfrak{z}=\xi_{\ast}(\mathfrak{Z})$.
Choose a $1$-cocycle $Z\in{}C^{1}_{\mathrm{cont}}(\gaun,T)$ representing $\mathfrak{Z}$,
and a $1$-cochain 
\[
          \widetilde{Z}=(Z,\dag,\ddag)\in{}\sco^{1}(T)\ \ \ \ 
          \text{such that}\ \ \ \ [\xi_{\ast}(\widetilde{Z})]=\mathfrak{z}\in{}\exsel^{1}(V_{p}(A)).
\]
(The  shape of $\dag\in{}C^{1}_{\mathrm{cont}}(\Q_{p},T^{+})$ and $\ddag\in{}\bigoplus_{l|Np}C^{0}_{\mathrm{cont}}(\Q_{l},T)$
will  not be relevant, and 
we use $(\ref{eq:naive extended})$ to identify $H^{1}_{f}(\Q,V_{p}(A))$ with a submodule of $\exsel^{1}(V_{p}(A))$.)
As $\xi_{\ast}\big(\widetilde{d}(\widetilde{Z})\big)=0$, 
there exist 
$\widetilde{Y}_{\varpi},\widetilde{Y}_{\varsigma}\in{}\sco^{2}(T)$ such that 
\begin{equation}\label{eq:differential}
          \widetilde{d}(\widetilde{Z})=\varpi\cdot{}\widetilde{Y}_{\varpi}+\varsigma\cdot{}\widetilde{Y}_{\varsigma}.
\end{equation}
Write $\widetilde{y}_{\varpi}:=\xi_{\ast}(\widetilde{Y}_{\varpi})$ and $\widetilde{y}_{\varsigma}:=\xi_{\ast}(\widetilde{Y}_{\varsigma})$. 
Lemma $\ref{explicit bockstein}$ yields 
\begin{equation}\label{eq:vicini1}
            -\widetilde{\beta}_{p}(\mathfrak{z})=[\widetilde{y}_{\varpi}]\otimes\{\varpi\}+
               [\widetilde{y}_{\varsigma}]\otimes\{\varsigma\}\in{}
               \exsel^{2}(V_{p}(A))\otimes_{\Q_{p}}\mathscr{P}/\mathscr{P}^{2}.
\end{equation}
For $?\in{}\{\varpi,\varsigma\}$, write  
$\widetilde{Y}_{?}=\big(Y_{?},Y_{?}^{+},\hat{K}_{?}+\hat{L}_{?}\big)$, where 
\[
          Y_{?}\in{}C^{2}_{\mathrm{cont}}(\gaun,T);\ \ Y_{?}^{+}\in{}C^{2}_{\mathrm{cont}}(\Q_{p},T^{+});\ \ 
                 \hat{K}_{?}\in{}C^{1}_{\mathrm{cont}}(\Q_{p},T);\ \ 
                             \hat{L}\in{}\bigoplus_{l|N}C^{1}_{\mathrm{cont}}(\Q_{l},T).
\]                             
Since $d(Z)=0$, 
$(\ref{eq:differential})$ gives $\varpi\cdot{}Y_{\varpi}=-\varsigma\cdot{}Y_{\varsigma}$ and this implies 
$\xi_{\ast}(Y_{?})=0$,
as $T$ and $T^{+}$ are free $\mathscr{R}$-modules. 
Define $$y_{?}^{+}:=\xi_{\ast}(Y_{?}^{+})\in{}C^{2}_{\mathrm{cont}}(\Q_{p},\Q_{p}(1));\ \  \  \ \ 
\hat{\kappa}_{?}:=\xi_{\ast}(\hat{K}_{?})\in{}C^{1}_{\mathrm{cont}}(\Q_{p},V_{p}(A)).$$
Since $\dercts(\Q_{l},V_{p}(A))\cong{}0$ for every prime $l\not=p$ one  deduces  
\[
                    [\widetilde{y}_{?}]=\big[\big(0,y_{?}^{+},\hat{\kappa}_{?}\big)\big]=\jmath\big(\kappa_{?}\big);\ \ \ \ 
                     \kappa_{?}:=p^{-}_{\ast}\big(\hat{\kappa}_{?}\big)\in{}H^{1}(\Q_{p},\Q_{p})
\]
(see equation $(\ref{eq:to cite in proof})$).
Lemma $\ref{adjoint nekovar pairing}$ and $(\ref{eq:vicini1})$ then give: for every $\boldsymbol{x}\in{}\exsel^{1}(V_{p}(A))$
\begin{equation}\label{eq:vicini3}
         \hwp{\mathfrak{z}}{\boldsymbol{x}}=\dia{-,-}_{\mathrm{Nek}}\otimes{}\Mm
         \Big(\widetilde{\beta}_{p}(\mathfrak{z})\otimes\boldsymbol{x}\Big)
         =\log_{p}(\varpi)\cdot{}\kappa_{\varpi}\big(\wp^{+}(\boldsymbol{x})\big)\cdot{}\{k-2\}
         +\log_{p}(\varsigma)\cdot{}\kappa_{\varsigma}\big(\wp^{+}(\boldsymbol{x})\big)\cdot{}\{s-1\}.
\end{equation}

\begin{lemma}\label{v4} The class $\mathrm{res}_{p}(\mathfrak{Z})$ belongs to $H^{1}(\Q_{p},T)^{o}$ and 
we have 
\[
           \log_{p}(\varsigma)\cdot{}\kappa_{\varsigma}\big(p^{-1}\big)=-\mathrm{Der}_{\mathrm{cyc}}\big(\mathrm{res}_{p}(\mathfrak{Z})\big);\ \ \ \ \ 
                   \log_{p}(\varpi)\cdot{}\kappa_{\varpi}\big(e(1)\big)=-\mathrm{Der}_{\mathrm{wt}}\big(\mathrm{res}_{p}(\mathfrak{Z})\big);
\]
\[
            \log_{p}(\varpi)\cdot{}\kappa_{\varpi}\big(p^{-1}\big)-\frac{1}{2}\log_{p}(\varsigma)\cdot{}\mathscr{L}_{p}(A)\cdot{}\kappa_{\varsigma}\big(e(1)\big)
            =-\mathrm{Der}_{\dag}\big(\mathrm{res}_{p}(\mathfrak{Z})\big).
\]
\end{lemma}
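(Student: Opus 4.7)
The strategy is to exploit the cochain-level identity $\widetilde{d}(\widetilde{Z}) = \varpi\cdot \widetilde{Y}_\varpi + \varsigma\cdot \widetilde{Y}_\varsigma$ by isolating its third component at the prime $p$. Writing $\widetilde{Z} = (Z, \dag, \ddag)$ and $\widetilde{Y}_? = (Y_?, Y_?^+, \hat{K}_? + \hat{L}_?)$, and denoting by $\ddag_p \in C^0_{\mathrm{cont}}(\Q_p, T)$ the $p$-component of $\ddag$, this identity at $p$ reads
\[
i^+(\dag) - Z|_{G_p} - d(\ddag_p) = \varpi\cdot \hat{K}_\varpi + \varsigma\cdot \hat{K}_\varsigma
\]
as an equation of $1$-cochains in $C^1_{\mathrm{cont}}(\Q_p, T)$. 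Applying the projection $p^- : T \twoheadrightarrow T^-$ and using $p^- \circ i^+ = 0$, this becomes
\[
-p^-(Z|_{G_p}) - d(p^-(\ddag_p)) = \varpi\cdot p^-(\hat{K}_\varpi) + \varsigma\cdot p^-(\hat{K}_\varsigma),
\]
and passing to cohomology in $H^1(\Q_p, T^-)$ eliminates the coboundary $d(p^-(\ddag_p))$, yielding
\[
p^-\bigl(\mathrm{res}_p(\mathfrak{Z})\bigr) = \varpi\cdot \bigl(-\bigl[p^-(\hat{K}_\varpi)\bigr]\bigr) + \varsigma\cdot \bigl(-\bigl[p^-(\hat{K}_\varsigma)\bigr]\bigr).
\]
This simultaneously proves $\mathrm{res}_p(\mathfrak{Z}) \in H^1(\Q_p, T)^o$ and exhibits the canonical choices $\mathfrak{Y}_\varpi := -\bigl[p^-(\hat{K}_\varpi)\bigr]$ and $\mathfrak{Y}_\varsigma := -\bigl[p^-(\hat{K}_\varsigma)\bigr]$ in $H^1(\Q_p, T^-)$ for the splitting used to evaluate $\mathrm{Der}_{\ast}$.

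Next I would specialise along $\xi$: since $\xi_\ast$ commutes both with $p^-$ and with formation of cohomology classes, and since $\xi_\ast(\hat{K}_?) = \hat{\kappa}_?$ by definition, one finds
\[
\mathfrak{y}_? = \xi_\ast(\mathfrak{Y}_?) = -\bigl[p^-(\hat{\kappa}_?)\bigr] = -\kappa_?
\]
in $H^1(\Q_p, \Q_p) = \mathrm{Hom}_{\mathrm{cont}}(\Q_p^{\ast}, \Q_p)$. Plugging this single identity into the defining formulae for $\mathrm{Der}_{\mathrm{wt}}, \mathrm{Der}_{\mathrm{cyc}}$ and $\mathrm{Der}_\dag$ from Section \ref{derivatives of cohomology section} yields the three displayed equalities at once, the overall minus sign on the right-hand sides being accounted for entirely by the sign in $\mathfrak{y}_? = -\kappa_?$.

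The only subtle point to verify is that the cohomology classes $\mathfrak{Y}_\varpi$ and $\mathfrak{Y}_\varsigma$ produced by this cochain-level argument are genuinely legitimate witnesses for evaluating $\mathrm{Der}_{\ast}$, rather than just one of many possible choices. This independence-of-choice is precisely the content guaranteed by Corollary \ref{mainderalgdisg}, which rests on the non-vanishing of $\mathscr{L}_p(A)$. Modulo that bookkeeping, the whole argument is a straightforward transcription of the Selmer-complex differential into a local statement at $p$, and the main conceptual work has already been done in the proof of Lemma \ref{explicit bockstein}.
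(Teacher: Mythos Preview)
Your overall strategy is right and leads to the same endpoint $\mathfrak{y}_?=-\kappa_?$, but there is a genuine gap at the step where you ``pass to cohomology in $H^{1}(\Q_p,T^{-})$'' and write
\[
p^{-}\bigl(\mathrm{res}_p(\mathfrak{Z})\bigr)=\varpi\cdot\bigl(-[p^{-}(\hat{K}_\varpi)]\bigr)+\varsigma\cdot\bigl(-[p^{-}(\hat{K}_\varsigma)]\bigr).
\]
The problem is that $\hat{K}_\varpi$ and $\hat{K}_\varsigma$ are merely $1$-\emph{cochains}, not cocycles: the lifts $\widetilde{Y}_\varpi,\widetilde{Y}_\varsigma$ satisfy only $\widetilde{d}(\widetilde{Y}_\varpi)\in\varsigma\cdot\sco^{3}(T)$ and $\widetilde{d}(\widetilde{Y}_\varsigma)\in\varpi\cdot\sco^{3}(T)$ (cf.\ the proof of Lemma~\ref{explicit bockstein}), which at the level of $p^{-}(\hat{K}_?)$ gives only $d\bigl(p^{-}(\hat{K}_?)\bigr)\in\mathscr{P}\cdot C^{2}_{\mathrm{cont}}(\Q_p,T^{-})$, not zero. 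So $[p^{-}(\hat{K}_?)]$ is not a well-defined element of $H^{1}(\Q_p,T^{-})$, and you cannot directly take it as $\mathfrak{Y}_?$. The independence statement from Corollary~\ref{mainderalgdisg} does not help here: it guarantees independence of the \emph{choice} among legitimate decompositions $p^{-}(\mathfrak{Y})=\varpi\mathfrak{Y}_\varpi+\varsigma\mathfrak{Y}_\varsigma$ with $\mathfrak{Y}_?\in H^{1}(\Q_p,T^{-})$, but does not produce such classes for you.

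The paper fixes exactly this point. Writing $K_?:=p^{-}_\ast(\hat{K}_?)$, one first observes that the \emph{sum} $\varpi K_\varpi+\varsigma K_\varsigma$ is a cocycle. Reducing modulo $\varsigma$ and using freeness of $T^{-}/\varsigma$ shows $K_\varpi\bmod\varsigma$ is a cocycle; Lemma~\ref{ISOH2}(1) then lifts it to a genuine cocycle $A_\varpi\in C^{1}_{\mathrm{cont}}(\Q_p,T^{-})$ with $A_\varpi\approx K_\varpi+\varsigma B_\varpi$, and symmetrically for $A_\varsigma$. One checks $B_\varpi+B_\varsigma$ is itself a cocycle, so
\[
-p^{-}\bigl(\mathrm{res}_p(\mathfrak{Z})\bigr)\equiv\varpi[A_\varpi]+\varsigma[A_\varsigma]\pmod{\mathscr{P}^{2}\cdot H^{1}(\Q_p,T^{-})}.
\]
Now $[A_?]$ \emph{are} honest classes, the $\mathscr{P}^{2}$-error dies under $\xi_\ast$, and $\xi_\ast([A_?])=\kappa_?$ since $A_?\approx K_?$ modulo $\mathscr{P}\cdot C^{1}$. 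This yields $\mathfrak{y}_?=-\kappa_?$ as you wanted. Your cochain-level identity is the right starting point; what is missing is this lifting step via Lemma~\ref{ISOH2}.
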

\begin{proof} Since $\mathfrak{z}$ is a Selmer class,  $p^{-}\big(\mathrm{res}_{p}(\mathfrak{Z})\big)$ is in the kernel 
of $\xi_{\ast} : H^{1}(\Q_{p},T^{-})\fre{}H^{1}(\Q_{p},\Q_{p})$ (see $(\ref{eq:extended selmer})$). 
Lemma $\ref{ISOH2}(2)$ then implies that 
$\mathrm{res}_{p}(\mathfrak{Z})\in{}H^{1}(\Q_{p},T)^{o}$.

Write $K_{?}:=p^{-}_{\ast}(\hat{K}_{?})$, so that $\xi_{\ast}(K_{?})=\kappa_{?}$. By equation $(\ref{eq:differential})$
\[
                    -p^{-}_{\ast}\big(\mathrm{res}_{p}(Z)\big)\approx\varpi\cdot{}K_{\varpi}+\varsigma\cdot{}K_{\varsigma},
\]
where $\approx$ denotes equality up to   coboundaries. In particular the sum in the R.H.S. is a $1$-cocycle in $C^{1}_{\mathrm{cont}}(\Q_{p},T^{-})$.
Then $\varpi\cdot{}\big(K_{\varpi}\ \mathrm{mod}\ \varsigma\big)\in{}C^{1}_{\mathrm{cont}}(\Q_{p},T^{-}/\varsigma)$ is a $1$-cocycle,
so $\big(K_{\varpi}\ \mathrm{mod}\ \varsigma\big)$ is  a $1$-cocycle, as $T^{-}/\varsigma$ is free over $\mathscr{R}/\varsigma$.  
Similarly, $\big(K_{\varsigma}\ \mathrm{mod}\ \varpi\big)\in{}C^{1}_{\mathrm{cont}}(\Q_{p},T^{-}/\varpi)$ is a $1$-cocycle. 
Lemma $\ref{ISOH2}$ then implies the existence of $1$-\emph{cocycles}
$A_{\varpi},A_{\varsigma}\in{}C^{1}_{\mathrm{cont}}(\Q_{p},T^{-})$ and $1$-\emph{cochains} 
$B_{\varpi},B_{\varsigma}\in{}C^{1}_{\mathrm{cont}}(\Q_{p},T^{-})$
such that 
\[
             A_{\varpi}\approx{}K_{\varpi}+\varsigma\cdot{}B_{\varpi};\ \ \ A_{\varsigma}\approx{}K_{\varsigma}+\varpi
             \cdot{}B_{\varsigma}.
\]
Note that $\varpi\varsigma\cdot{}(B_{\varpi}+B_{\varsigma})\in{}C^{1}_{\mathrm{cont}}(\Q_{p},T^{-})$ is a 
$1$-cocycle; using again 
the fact that  $T^{-}$
is  $\mathscr{R}$-free,  this implies that  $B_{\varpi}+B_{\varsigma}$ itself is a $1$-\emph{cocycle}. We then deduce the congruence 
\[
          -p^{-}\big(\mathrm{res}_{p}(\mathfrak{Z})\big)=\big[\varpi\cdot{}K_{\varpi}+\varsigma\cdot{}K_{\varsigma}\big]
          \equiv{}\varpi\cdot{}\big[A_{\varpi}\big]+\varsigma\cdot{}\big[A_{\varsigma}\big]\ \ \ 
           \big(\mathrm{mod}\ \mathscr{P}^{2}\cdot{}H^{1}(\Q_{p},T^{-})\big).
\]
Since $\kappa_{\varpi}=\xi_{\ast}([A_{\varpi}])$ and $\kappa_{\varsigma}=\xi_{\ast}([A_{\varsigma}])$,
the lemma follows from the definition of the derivatives of $\mathrm{res}_{p}(\mathfrak{Z})$.
\end{proof}

Coming back to our proof, since the $p$-adic logarithm  $\log_{p}$ and the $p$-adic valuation  $\mathrm{ord}_{p}$
give a $\Q_{p}$-basis of $\mathrm{Hom}_{\mathrm{cont}}(\Q_{p}^{\ast},\Q_{p})$, Lemma $\ref{v4}$ 
allows us to write
\begin{equation}\label{eq:vicini23}
           -\log_{p}(\varsigma)\cdot{}\kappa_{\varsigma}:=a(\varsigma)\cdot{}\log_{p}-\mathrm{Der}_{\mathrm{cyc}}\big(\mathrm{res}_{p}(\mathfrak{Z})\big)\cdot{}
           \mathrm{ord}_{p};\ \ \ 
           -\log_{p}(\varpi)\cdot{}\kappa_{\varpi}=\mathrm{Der}_{\mathrm{wt}}\big(\mathrm{res}_{p}(\mathfrak{Z})\big)
           \cdot{}\log_{p}+b(\varpi)\cdot{}\mathrm{ord}_{p},
\end{equation}
for (unique) constants $a(\varsigma),b(\varpi)\in{}\Q_{p}$ which satisfy
\begin{equation}\label{eq:vicini6}
          b(\varpi)+\frac{1}{2}\mathscr{L}_{p}(A)\cdot{}a(\varsigma)=-\mathrm{Der}_{\dag}\big(\mathrm{res}_{p}(\mathfrak{Z})\big).
\end{equation}

Since $\wp^{+}(\mathfrak{z})\in{}\Z_{p}^{\ast}\widehat{\otimes}\Q_{p}$ by $(\ref{eq:property section})$
and $\wp^{+}(q_{A})=q_{A}\widehat{\otimes}1$ (cf. $(\ref{eq:naive extended})$), equations $(\ref{eq:vicini3})$ and 
$(\ref{eq:vicini23})$ yield
\begin{equation}\label{eq:vicinif1}
            -\hwp{\mathfrak{z}}{\mathfrak{z}}=a(\varsigma)\cdot{}\log_{A}\big(\mathrm{res}_{p}(\mathfrak{z})\big)\cdot{}\{s-1\}
            +\mathrm{Der}_{\mathrm{wt}}(\mathrm{res}_{p}\big(\mathfrak{Z})\big)\cdot{}\log_{A}\big(\mathrm{res}_{p}(\mathfrak{z})\big)\cdot{}\{k-2\},
\end{equation}
and 
\begin{align}\label{eq:vicinif2}
              -\hwp{\mathfrak{z}}{q_{A}}=\Big(a(\varsigma)\cdot{}\log_{p}(q_{A})-\mathrm{Der}_{\mathrm{cyc}}\big(\mathrm{res}_{p}(\mathfrak{Z})\big)
              \cdot{}\mathrm{ord}_{p}(q_{A})\big)\Big)\cdot{}\{s-1\}   & \\
               +\Big(   \mathrm{Der}_{\mathrm{wt}}\big(\mathrm{res}_{p}(\mathfrak{Z})\big)\cdot{}\log_{p}(q_{A})+
               b(\varpi)\cdot{}\mathrm{ord}_{p}(q_{A})\big)   &\Big)  \cdot{}\{k-2\} \nonumber
\end{align}
(where we have used the formula $\log_{p}\circ{}\wp^{+}(\mathfrak{z})=\log_{A}\big(\mathrm{res}_{p}(\mathfrak{z})\big)$,
which follows immediately retracing  the definitions of $\log_{A}$ and $\wp^{+}$).
Moreover, the exceptional zero formulae displayed  in Theorem $\ref{mainreg}(2)$ give  the identities 
\begin{equation}\label{eq:vicinif3}
                 \hwp{q_{A}}{q_{A}}=\log_{p}(q_{A})\cdot{}\{s-k/2\};\ \ \ \hwp{q_{A}}{\mathfrak{z}}=\log_{A}\big(\mathrm{res}_{p}(\mathfrak{z})\big)\cdot{}\{s-1\}.
\end{equation}
Using equations $(\ref{eq:vicini6})$, $(\ref{eq:vicinif1})$, $(\ref{eq:vicinif2})$ and $(\ref{eq:vicinif3})$ 
and writing for simplicity $\mathrm{Der}_{?}(\mathfrak{Z}):=\mathrm{Der}_{?}\big(\mathrm{res}_{p}(\mathfrak{Z})\big)$, we compute
\begin{align*}
         \frac{-\widetilde{h}_{p}(\mathfrak{z})}{\mathrm{ord}_{p}(q_{A})}=
        \log_{A}\big(\mathrm{res}_{p}(\mathfrak{z})\big)
                 \times{} \Big(\mathrm{Der}_{\mathrm{cyc}}(\mathfrak{Z})\cdot{}\{s-1\}^{2}
                 +\mathrm{Der}_{\dag}(\mathfrak{Z})\cdot{}\{s-1\}\{k-2\}-\frac{1}{2}\mathscr{L}_{p}(A)
                 \cdot{}\mathrm{Der}_{\mathrm{wt}}
                 (\mathfrak{Z})\cdot{}\{k-2\}^{2}  \Big)
\end{align*}
in $\mathscr{J}^{2}/\mathscr{J}^{3}$.
Part 2 of Theorem $\ref{p-adic GZ}$ follows by combining  the last equation 
with  Corollary $\ref{mainderalgdisg}$.

\section{Proofs of the main results}

In this section we  prove the results stated in the introduction.

\subsection{Proof of Theorem A} As in Section $\ref{outline of the proof}$, let $L_{p}^{\mathrm{cc}}(f_{\infty},k):=L_{p}(f_{\infty},k,k/2)\in{}\mathscr{A}_{U}$
and let 
$$\widetilde{h}^{\mathrm{cc}}_{p} : H^{1}_{f}(\Q,V_{p}(A))\fre{}\Q_{p}$$
be the composition of $\widetilde{h}_{p}$ with the morphism $\mathscr{J}^{2}/\mathscr{J}^{3}\fre{}\Q_{p}$
sending $\alpha(k,s)\in{}\mathscr{J}^{2}$ to $\frac{d^{2}}{dk^{2}}\alpha(k,k/2)_{k=2}$.
By the functional equation for $\hwp{-}{-}(k,s)$ stated in Theorem $\ref{mainreg}(3)$, 
$\hwp{-}{-}(k,k/2)$ 
is a \emph{skew-symmetric} pairing on $\exsel^{1}(\Q,V_{p}(A))$. 
Together with  Theorem $\ref{mainreg}(2)$ this gives 
\begin{equation}\label{eq:ccpairing}
           \widetilde{h}^{\mathrm{cc}}_{p}(x)=
           \left.\frac{d^{2}}{dk^{2}}
           \det\begin{pmatrix}  0    & \frac{1}{2}\log_{A}\big(\mathrm{res}_{p}(x)\big)\cdot{}(k-2) \\ &
           \\ -\frac{1}{2}\log_{A}\big(\mathrm{res}_{p}(x)\big)\cdot{}(k-2)   & 0  \end{pmatrix}\right|_{k=2}=\frac{1}{2}\log_{A}^{2}
           \big(\mathrm{res}_{p}(x)\big),
\end{equation}
for every Selmer class $x\in{}H^{1}_{f}(\Q,V_{p}(A))$. 

Assume that $L(A/\Q,1)=0$, i.e. that $\zeta^{\mathrm{BK}}$ is a Selmer class by Kato's reciprocity $(\ref{eq:kato reciprocity})$. 
Combining the Bertolini--Darmon exceptional zero formula  of Theorem $\ref{main Bertolini-Darmon}$,
Theorem $\ref{probably main paper}$ and equation $(\ref{eq:ccpairing})$, one obtains the identity 
\begin{equation}\label{eq:main Theorem A}
           \log_{A}\lri{\mathrm{res}_{p}\big(\zeta^{\mathrm{BK}}\big)}\cdot{}2\ell\cdot{}\log_{A}^{2}(\mathbf{P})=
           \frac{-1}{\mathrm{ord}_{p}(q_{A})}\lri{1-\frac{1}{p}}^{-1}\cdot{}\log_{A}^{2}\lri{\mathrm{res}_{p}\big(\zeta^{\mathrm{BK}}\big)},
\end{equation}
for a non-zero rational number $\ell\in{}\Q^{\ast}$ and a rational point $\mathbf{P}\in{}A(\Q)\otimes\Q$.
Moreover, $\mathbf{P}\not=0$ precisely if $L(A/\Q,s)$ has a simple zero at $s=1$.
In order to conclude the proof of Theorem A, we need the following lemma. 
For every $\mathfrak{Z}\in{}H^{1}(\gaun,T)$, write 
$\mathcal{L}_{\T}^{\mathrm{cc}}\big(\mathrm{res}_{p}(\mathfrak{Z}),k\big)$ for the restriction of 
$\mathcal{L}_{\T}\big(\mathrm{res}_{p}(\mathfrak{Z}),k,s\big)$ to the central critical line $s=k/2$,
and let  $\xi_{\ast} : H^{1}(\gaun,T)\fre{}H^{1}(\gaun,V_{p}(A))$  be the morphism induced by $(\ref{eq:isoTxi})$.

\begin{lemma} Let $\mathfrak{Z}\in{}H^{1}(\gaun,T)$ be such that $\xi_{\ast}(\mathfrak{Z})\in{}H^{1}_{f}(\Q,V_{p}(A))$. The following statements are equivalent:\vspace{1mm}\\
$(a)$ $\xi_{\ast}(\mathfrak{Z})$ is in the kernel of the restriction map 
$\mathrm{res}_{p} : H^{1}_{f}(\Q,V_{p}(A))\fre{}A(\Q_{p})\widehat{\otimes}\Q_{p}$.\vspace{1mm}\\
$(b)$ $\mathcal{L}_{\T}^{\mathrm{cc}}\big(\mathrm{res}_{p}(\mathfrak{Z}),k\big)$ vanishes to order (strictly) greater than $2$ at $k=2$.
\end{lemma}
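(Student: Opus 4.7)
The proof begins with a reduction. Since $\mathfrak{z} := \xi_\ast(\mathfrak{Z}) \in H^1_f(\mathbf{Q}, V_p(A))$, its local restriction lies in $H^1_f(\mathbf{Q}_p, V_p(A)) = \ker(\exp_A^\ast)$; Theorem $\ref{p-adic GZ}$(1) then forces $\mathcal{L}_\mathbb{T}(\mathrm{res}_p(\mathfrak{Z}), k, s) \in \mathscr{J}^2$, so $\mathcal{L}_\mathbb{T}^\mathrm{cc}(\mathrm{res}_p(\mathfrak{Z}), k)$ already vanishes to order at least $2$ at $k=2$. Writing $L := \log_A(\mathrm{res}_p(\mathfrak{z}))$ and using that $\log_A$ is an isomorphism $A(\mathbf{Q}_p)\widehat{\otimes}\mathbf{Q}_p \cong \mathbf{Q}_p$, statement (a) is equivalent to $L = 0$, while (b) is equivalent to the vanishing of the coefficient of $(k-2)^2$ in $\mathcal{L}_\mathbb{T}^\mathrm{cc}(\mathrm{res}_p(\mathfrak{Z}), k)$. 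The plan is to compute this coefficient explicitly and verify it equals $-L/(4\,\mathrm{ord}_p(q_A)(1-p^{-1}))$.

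Lemma $\ref{v4}$ ensures $\mathrm{res}_p(\mathfrak{Z}) \in H^1(\mathbf{Q}_p, T)^o$, so Corollary $\ref{mainderalgdisg}$ applies. Substituting $s=k/2$ (so that $(s-1)^2$, $(s-1)(k-2)$, $(k-2)^2$ become $\tfrac14(k-2)^2$, $\tfrac12(k-2)^2$, $(k-2)^2$) yields
$$(1-p^{-1})\,\mathcal{L}_\mathbb{T}^\mathrm{cc}(\mathrm{res}_p(\mathfrak{Z}), k) \equiv \Big[\tfrac{1}{4}\mathrm{Der}_\mathrm{cyc} + \tfrac{1}{2}\mathrm{Der}_\dag - \tfrac{1}{2}\mathscr{L}_p(A)\,\mathrm{Der}_\mathrm{wt}\Big](k-2)^2 \pmod{(k-2)^3},$$
with all $\mathrm{Der}$'s evaluated on $\mathrm{res}_p(\mathfrak{Z})$.

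Next, I would re-use the auxiliary constants $a(\varsigma), b(\varpi) \in \mathbf{Q}_p$ and the identities already established in the proof of Theorem $\ref{p-adic GZ}$(2). Equation $(\ref{eq:vicini6})$ supplies $\mathrm{Der}_\dag = -b(\varpi) - \tfrac12 \mathscr{L}_p(A) a(\varsigma)$. To pin down the remaining quantities I equate two expressions for $\hwp{\mathfrak{z}}{q_A}$: on the one hand formula $(\ref{eq:vicinif2})$ in terms of the $\mathrm{Der}$'s and $a(\varsigma), b(\varpi)$, and on the other the value obtained by applying the functional equation Theorem $\ref{mainreg}$(3) to $\hwp{q_A}{\mathfrak{z}} = L\{s-1\}$ of Theorem $\ref{mainreg}$(2), namely $\hwp{\mathfrak{z}}{q_A} = L\{s-1\} - L\{k-2\}$. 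Using $\log_p(q_A) = \mathscr{L}_p(A)\mathrm{ord}_p(q_A)$, the comparison of the two sides produces
$$\mathrm{Der}_\mathrm{cyc} - \mathscr{L}_p(A)\,a(\varsigma) = \tfrac{L}{\mathrm{ord}_p(q_A)}, \qquad b(\varpi) + \mathscr{L}_p(A)\,\mathrm{Der}_\mathrm{wt} = \tfrac{L}{\mathrm{ord}_p(q_A)}.$$
Substituting these two relations together with $(\ref{eq:vicini6})$ into the bracketed coefficient above, the contributions of $a(\varsigma)$ and $\mathrm{Der}_\mathrm{wt}$ cancel pairwise and one is left with exactly $-L/(4\,\mathrm{ord}_p(q_A))$. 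Dividing by $1-p^{-1}$ gives
$$\mathcal{L}_\mathbb{T}^\mathrm{cc}(\mathrm{res}_p(\mathfrak{Z}), k) \equiv -\frac{L}{4\,\mathrm{ord}_p(q_A)(1-p^{-1})}(k-2)^2 \pmod{(k-2)^3},$$
from which the equivalence (a) $\Leftrightarrow$ (b) is immediate.

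The principal conceptual obstacle is that Theorem $\ref{p-adic GZ}$(2) cannot be invoked as a black box: after multiplying by $L$ its conclusion degenerates to $0 = 0$ when $L = 0$ and therefore does not determine $\mathcal{L}_\mathbb{T}^\mathrm{cc}$. One really has to reopen the computation performed inside the proof of Theorem $\ref{p-adic GZ}$(2) and follow the coefficient of $(k-2)^2$ in $\mathcal{L}_\mathbb{T}^\mathrm{cc}$ itself rather than in $L\cdot\mathcal{L}_\mathbb{T}^\mathrm{cc}$. The happy circumstance is the perfect cancellation of the auxiliary quantities $a(\varsigma), b(\varpi), \mathrm{Der}_\mathrm{wt}$ once the relations supplied by the functional equation of the height--weight pairing are inserted, which makes the final formula clean and the equivalence transparent.
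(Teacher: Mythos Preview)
Your proof is correct, and it takes a genuinely different route from the paper's.

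The paper handles the two implications separately. For $(b)\Rightarrow(a)$ it simply combines Theorem~\ref{p-adic GZ}(2) with equation~$(\ref{eq:ccpairing})$ to get $L\cdot\frac{d^{2}}{dk^{2}}\mathcal{L}_{\T}^{\mathrm{cc}}\big(\mathrm{res}_{p}(\mathfrak{Z}),k\big)_{k=2}\stackrel{\cdot}{=}L^{2}$, so vanishing of the second derivative forces $L=0$. For $(a)\Rightarrow(b)$ the paper exploits a stronger structural fact: when $\mathrm{res}_{p}(\mathfrak{z})=0$ in the \emph{full} local cohomology $H^{1}(\Q_{p},V_{p}(A))$ (not merely in $H^{1}(\Q_{p},\Q_{p})$), one can lift the whole class $\mathrm{res}_{p}(\mathfrak{Z})$ into $\mathscr{P}\cdot H^{1}(\Q_{p},T)$ and apply Theorem~\ref{mainderalg}(2) piecewise. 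Since $\mathcal{L}_{\T}(\cdot,k,s)$ on $H^{1}(\Q_{p},T)$ always carries a factor of $(s-k/2)$ modulo $\mathscr{J}^{2}$, restriction to the central critical line kills the leading term.

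Your approach instead reopens the proof of Theorem~\ref{p-adic GZ}(2), extracts the two linear relations between $\mathrm{Der}_{\mathrm{cyc}}$, $\mathrm{Der}_{\mathrm{wt}}$, $a(\varsigma)$, $b(\varpi)$ supplied by the functional equation $\hwp{\mathfrak{z}}{q_{A}}(k,s)=-\hwp{q_{A}}{\mathfrak{z}}(k,k-s)$ applied to $(\ref{eq:vicinif2})$, and then verifies by direct substitution into Corollary~\ref{mainderalgdisg} that the $(k-2)^{2}$-coefficient collapses to $-L/(4\,\mathrm{ord}_{p}(q_{A}))$. This yields both implications simultaneously and, as a bonus, the exact constant of proportionality. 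The price is that you rely on the internal quantities $a(\varsigma)$, $b(\varpi)$ and the identity $(\ref{eq:vicinif2})$ from the proof of Theorem~\ref{p-adic GZ}(2), whereas the paper's argument for $(a)\Rightarrow(b)$ is self-contained once Theorem~\ref{mainderalg}(2) is in hand and makes transparent \emph{why} the central-critical restriction vanishes: the factor $(s-k/2)$.
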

\begin{proof} Write $\mathfrak{z}:=\xi_{\ast}(\mathfrak{Z})$. Theorem $\ref{p-adic GZ}(2)$
 and equation $(\ref{eq:ccpairing})$ yield 
\[
                     \log_{A}\big(\mathrm{res}_{p}(\mathfrak{z})\big)\cdot{}\frac{d^{2}}{dk^{2}}
                     \mathcal{L}_{\T}^{\mathrm{cc}}\big(\mathrm{res}_{p}(\mathfrak{Z}),k\big)_{k=2}
                     \stackrel{\cdot{}}{=}\log_{A}^{2}\big(\mathrm{res}_{p}(\mathfrak{z})\big),
\]
where $\stackrel{\cdot{}}{=}$ denotes equality up to a non-zero rational factor. 
Since $\log_{A} : A(\Q_{p})\widehat{\otimes}\Q_{p}\cong{}\Q_{p}$ is an isomorphism, this shows that $(b)$ implies $(a)$. 

Assume now that $(a)$ holds.
 Since $0=\mathrm{res}_{p}\big(\xi_{\ast}(\mathfrak{Z})\big)=\xi_{\ast}\big(\mathrm{res}_{p}(\mathfrak{Z})\big)$,
one can write  $\mathrm{res}_{p}(\mathfrak{Z})=\varsigma\cdot{}\mathfrak{Z}_{\varsigma}
+\varpi\cdot{}\mathfrak{Z}_{\varpi}$, for classes $\mathfrak{Z}_{\varsigma},\mathfrak{Z}_{\varpi}\in{}H^{1}(\Q_{p},T)$.
(Indeed, as $H^{2}(\Q_{p},V_{p}(A))=0$, an argument similar to the one appearing in the proof of  Lemma 
$\ref{ISOH2}(2)$
proves that $H^{1}(\Q_{p},T)\otimes_{\mathscr{R},\xi}\Q_{p}\cong{}H^{1}(\Q_{p},V_{p}(A))$.)
By Theorem $\ref{mainderalg}(2)$
\[
          \mathcal{L}_{\T}\big(\mathrm{res}_{p}(\mathfrak{Z}),k,s\big)\equiv{}\mathscr{L}_{p}(A)\cdot{}\Big(\exp_{A}^{\ast}\big(\mathfrak{z}_{\varsigma}\big)\cdot{}(s-1)
          +\exp_{A}^{\ast}\big(\mathfrak{z}_{\varpi}\big)\cdot{}(k-2)\Big)\cdot{}(s-k/2)\ \ 
          \big(\mathrm{mod}\  \mathscr{J}^{3}\big),
\]
where  $\mathfrak{z}_{\varpi}:=\log_{p}(\varpi)\cdot{}\xi_{\ast}(\mathfrak{Z}_{\varpi}),\ \mathfrak{z}_{\varsigma}:=
\log_{p}(\varsigma)\cdot{}\xi_{\ast}(\mathfrak{Z}_{\varsigma})\in{}H^{1}(\Q_{p},V_{p}(A))$.
This shows that $(a)$ implies $(b)$, thus concluding the  proof of the lemma.
\end{proof}

Coming back to our proof, the preceding lemma, applied to $\mathfrak{Z}=\bki_{\infty}$, tells us that 
$\mathrm{res}_{p}\big(\zeta^{\mathrm{BK}}\big)=0$ (or equivalently $\log_{A}\big(\mathrm{res}_{p}\big(\zeta^{\mathrm{BK}}\big)\big)=0$)
if and only if $L_{p}^{\mathrm{cc}}(f_{\infty},k)$ vanishes to order greater than $2$ at $k=2$.
In addition, Theorem $\ref{main Bertolini-Darmon}$ tells us that the latter condition is equivalent to $\mathbf{P}=0$.
To sum up: $\mathrm{res}_{p}\big(\zeta^{\mathrm{BK}}\big)$ is non-zero if and only if $\mathbf{P}$ is non-zero.
Defining  $\ell_{1}:=-2\ell\cdot{}\mathrm{ord}_{p}(q_{A})\cdot{}(1-p^{-1})\in{}\Q^{\ast}$,  equation $(\ref{eq:main Theorem A})$ then gives 
\[
                    \log_{A}\big(\mathrm{res}_{p}\big(\zeta^{\mathrm{BK}}\big)\big)=\ell_{1}\cdot{}\log_{A}^{2}(\mathbf{P}),
\]
concluding the proof of Theorem A.

\subsection{Proof of Theorem B} Write $r_{\mathrm{an}}:=\mathrm{ord}_{s=1}L(A/\Q,s)$. 
That $r_{\mathrm{an}}\leq{}1$ implies $\zeta^{\mathrm{BK}}\not=0$ follows from 
Kato's  reciprocity law $(\ref{eq:kato reciprocity})$ (if $r_{\mathrm{an}}=0$) and Theorem A (if $r_{\mathrm{an}}=1$).

Conversely, assume that $\zeta^{\mathrm{BK}}$ is non-zero. 
The method of Kolyvagin, applied to the Euler system constructed by Kato in \cite{kateul},
then tells us that the strict Selmer group 
\[
             \{x\in{}H^{1}(\gaun,V_{p}(A)) : \mathrm{res}_{p}(x)=0\}\subset{}H^{1}_{f}(\Q,V_{p}(A))
\]
is trivial. For a proof of this result, see Theorem 2.3 and Chapter III, Section 5  of \cite{Rub}.
(Note that $A$ does not have  complex multiplication, since $\mathrm{ord}_{p}(j_{A})=-\mathrm{ord}_{p}(q_{A})<0$  \cite[Theorem 6.1]{Sil-2}.
This implies that the hypotheses of \cite[Theorem 2.3]{Rub} are satisfied.)
Then the restriction $\mathrm{res}_{p}\big(\zeta^{\mathrm{BK}}\big)$ is non-zero.
Using again  Theorem A (resp., equation $(\ref{eq:kato reciprocity})$), one deduces that 
$r_{\mathrm{an}}=1$ (resp., $r_{\mathrm{an}}=0$) if $\zeta^{\mathrm{BK}}$
is (resp., is not) a Selmer class.

\subsection{An interlude} In the proofs of  Theorems C-E, we need the following lemma.

\begin{lemma}\label{cruclele} Assume  that $\mathbf{(Loc)}$ holds and that  
$\mathrm{ord}_{s=1}L_{p}(A/\Q,s)=2$. Then 
$\zeta^{\mathrm{BK}}\not=0$.
\end{lemma}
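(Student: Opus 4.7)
The plan is to argue by contradiction: assume $\zeta^{\mathrm{BK}}=0$ and derive a contradiction with $\mathrm{ord}_{s=1}L_p(A/\Q,s)=2$. Since $(\mathrm{Loc})$ forces $L(A/\Q,1)=0$, Kato's reciprocity law $(\ref{eq:kato reciprocity})$ shows $\zeta^{\mathrm{BK}}$ is a Selmer class, so Theorem A applies and the assumption $\log_A(\mathrm{res}_p(\zeta^{\mathrm{BK}}))=0$ forces $\log_A^2(\mathbf{P})=0$, whence $\mathbf{P}=0$ in $A(\Q)\otimes\Q$. The Bertolini--Darmon formula (Theorem $\ref{main Bertolini-Darmon}$) then yields $\frac{d^{2}}{dk^{2}}L_p^{\mathrm{cc}}(f_\infty,k)_{k=2}=0$, i.e.\ $L_p^{\mathrm{cc}}(f_\infty,k)$ vanishes to order at least $3$ at $k=2$.

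Combining $L(A/\Q,1)=0$ with Theorem $\ref{main GS}$ gives $L_p(f_\infty,k,s)\in\mathscr{J}^2$; write the quadratic part modulo $\mathscr{J}^3$ as $c_{20}(s-1)^2+c_{11}(s-1)(k-2)+c_{02}(k-2)^2$. The hypothesis $\mathrm{ord}_{s=1}L_p(A/\Q,s)=2$ means precisely $c_{20}\neq 0$. Expanding the functional equation $(\ref{eq:funeqfinal})$ on the quadratic jet, the case $\mathrm{sign}(A/\Q)=+1$ would force $c_{20}=0$; hence $\mathrm{sign}(A/\Q)=-1$, and the same expansion yields $c_{11}=-c_{20}$. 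Together with the Bertolini--Darmon relation $c_{02}+c_{11}/2+c_{20}/4=0$ obtained by restricting to $s=k/2$, this forces $c_{02}=c_{20}/4$, so the degree-$2$ jet of $L_p(f_\infty,k,s)$ at $(2,1)$ is precisely $c_{20}(s-k/2)^2$ with $c_{20}\neq 0$.

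To close the argument I would use Corollary $\ref{mainderalgdisg}$, applied to $\mathrm{res}_p(\bki)\in H^1(\Q_p,T)^o$ (this is legal since $\zeta^{\mathrm{BK}}$ is Selmer, by Lemma $\ref{v4}$), which identifies $c_{20}=\mathrm{Der}_{\mathrm{cyc}}(\mathrm{res}_p(\bki))/(1-1/p)$. Under the standing assumption $\zeta^{\mathrm{BK}}=0$, one can choose a cochain representative $\widetilde Z$ of $\bki$ with $\xi_\ast(\widetilde Z)=0$, and thus a decomposition $\widetilde Z=\varpi\widetilde Z_\varpi+\varsigma\widetilde Z_\varsigma$ in $\scob(T)$. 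The $2$-cochains $\widetilde Y_\varpi=\widetilde d(\widetilde Z_\varpi)$ and $\widetilde Y_\varsigma=\widetilde d(\widetilde Z_\varsigma)$ are then coboundaries in $\scob(T)$, so the classes $[\widetilde y_\varpi],[\widetilde y_\varsigma]\in\exsel^2(V_p(A))$ of Lemma $\ref{explicit bockstein}$ vanish, forcing $\kappa_\varpi,\kappa_\varsigma\in\ker(\jmath)=\mathrm{im}(p^-\circ\mathrm{res}_p)$ by the long exact sequence $(\ref{eq:long nekovar})$. Combining this with the identities from equations $(\ref{eq:vicini23})$--$(\ref{eq:vicini6})$ expressing $\mathrm{Der}_{\mathrm{cyc}},\mathrm{Der}_{\mathrm{wt}},\mathrm{Der}_{\dag}$ in terms of $\kappa_\varsigma,\kappa_\varpi,a(\varsigma),b(\varpi)$, and using $\exp^\ast(\log_{q_A})=1$, one aims to deduce $\mathrm{Der}_{\mathrm{cyc}}(\mathrm{res}_p(\bki))=0$ and hence $c_{20}=0$, the desired contradiction.

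The main obstacle lies in this last deduction: while $\kappa_\varsigma$ is automatically of the form $\mu_\varsigma\log_{q_A}$ (since $\mathrm{im}(p^-)=\Q_p\cdot\log_{q_A}$), a purely local argument does not preclude $\mu_\varsigma\neq 0$. One must exploit global information --- namely that $\kappa_\varsigma$ arises as $p^-\circ\mathrm{res}_p$ of a \emph{global} class $X_\varsigma\in H^1(\gaun,V_p(A))$ whose Kato-theoretic nature (coming from the weight specialisation of the derivative of the two-variable Beilinson--Kato class) constrains its position relative to $H^1_f$ --- to conclude that $\exp_A^\ast$ of this class vanishes, which translates into $\mu_\varsigma=0$.
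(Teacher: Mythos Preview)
Your proposal is incomplete, and you acknowledge this yourself: the final step --- deducing $\mathrm{Der}_{\mathrm{cyc}}\big(\mathrm{res}_p(\bki_\infty)\big)=0$ from $\kappa_\varsigma\in\ker(\jmath)$ --- is left as something ``one aims to deduce'' by exploiting unspecified ``global information''. The global input you are missing is in fact very concrete and is the heart of the paper's proof: hypothesis $\mathbf{(Loc)}$, via Poitou--Tate duality (cf.\ \cite[Lemme~2.3.9]{PRconj}), forces $H^{1}(\gaun,V_p(A))=H^{1}_{f}(\Q,V_p(A))$, so \emph{every} global class is automatically a Selmer class. Once you have this, your $\kappa_\varsigma$ (which comes from $p^{-}\circ\mathrm{res}_p$ of a global class) would indeed vanish. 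But even granting that, your route through Theorem~A, the Bertolini--Darmon formula, the functional equation, and the two-variable Bockstein machinery is enormously more elaborate than necessary.

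The paper's proof is short and stays entirely in the one-variable cyclotomic tower. Since $H^{0}(\gaun,V_p(A))=0$, the module $H^{1}_{\mathrm{Iw}}(\Q_\infty,V_p(A))$ has no $\varsigma$-torsion; and since $L_p(A/\Q)\neq0$ by Rohrlich, one has $\zeta_\infty^{\mathrm{BK}}\neq0$. Hence there is a unique factorisation $\zeta_\infty^{\mathrm{BK}}=\varsigma^{\rho}\cdot z_\infty^{\mathrm{BK}}$ with $\rho\geq0$ and $z_0^{\mathrm{BK}}\neq0$ in $H^{1}(\gaun,V_p(A))$. By the Poitou--Tate consequence of $\mathbf{(Loc)}$ just mentioned, $z_0^{\mathrm{BK}}$ is Selmer, so Corollary~\ref{derivative coleman} gives $\mathcal{L}_A\big(\mathrm{res}_p(z_\infty^{\mathrm{BK}})\big)\in\varsigma^{2}\Ic$, whence $L_p(A/\Q)=\varsigma^{\rho}\cdot\mathcal{L}_A\big(\mathrm{res}_p(z_\infty^{\mathrm{BK}})\big)\in\varsigma^{\rho+2}\Ic$. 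The hypothesis $\mathrm{ord}_{s=1}L_p(A/\Q,s)=2$ then forces $\rho=0$, i.e.\ $\zeta^{\mathrm{BK}}=z_0^{\mathrm{BK}}\neq0$. This is the entire argument; no appeal to Hida families, Heegner points, or Selmer complexes is required.
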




\begin{proof} 
We have short exact sequences of $\Q_p$-modules (easily deduced from  Shapiro's Lemma):
\[
             0\fre{}H^q_\mathrm{Iw}(\Q_\infty,V_p(A))/\varsigma\fre{}H^q(\gaun,V_p(A))\fre{}
             H^{q+1}_\mathrm{Iw}(\Q_\infty,V_p(A))[\varsigma]\fre{}0,
\]   
where $H^q_\mathrm{Iw}(\Q_\infty,V_p(A)):=H^q_\mathrm{Iw}(\Q_\infty,T_{p}(A))\otimes_{\Z_{p}}\Q_{p}$.
Since $H^0(\gaun,V_p(A))=0$,  $H^1_\mathrm{Iw}(\Q_\infty,V_p(A))$ has no non-trivial $\varsigma$-torsion. 
Moreover  a theorem of Rohrlich \cite{Rohka} states that $L_{p}(A/\Q)\not=0$, so in particular 
$\zeta_{\infty}^{\mathrm{BK}}\not=0$ by $(\ref{eq:kato reciprocity +})$. 
There exist then  a unique class
$z_\infty^{\mathrm{BK}}=\big(z^{\mathrm{BK}}_{n}\big)\in{}H^1_\mathrm{Iw}(\Q_\infty,V_p(A))$
and a unique  integer $\rho=\rho_{\mathrm{BK}}\geq{}0$ such that
\[
              \zeta_\infty^{\mathrm{BK}}=\varsigma^{\rho}\cdot{}z_\infty^{\mathrm{BK}};\ \ \ \ 0\not=z^{\mathrm{BK}}_0\in{}H^1(\gaun,V_p(A)).
\]
By  Poitou--Tate duality and hypothesis $\mathbf{(Loc)}$ one has 
$H^1_f(\Q,V_p(A))=H^1(\gaun,V_p(A))$
(see Lemme 2.3.9 of \cite{PRconj}). 
In particular $z_0^\text{BK}\in{}H^1_f(\Q,V_p(A))$, so that 
\[
           \mathcal{L}_{A}\big(\mathrm{res}_{p}\big(z_{\infty}^{\mathrm{BK}}\big)\big)\in{}\varsigma^{2}\cdot{}\Ic
\]
by Corollary $\ref{derivative coleman}$. This yields 
\[
            L_{p}(A/\Q)=\mathcal{L}_{A}\big(\mathrm{res}_{p}\big(\zeta^{\mathrm{BK}}_{\infty}\big)\big)\in{}\varsigma^{\rho+2}\cdot{}\Ic,
\]
i.e. $\mathrm{ord}_{s=1}L_p(A/\Q,s)\geq{}\rho+2$.
Our assumption then forces
 $\rho=0$ and $\zeta^{\mathrm{BK}}=z^{\mathrm{BK}}_{0}\not=0$,
as was to be shown. 
\end{proof}

\subsection{Proof of Theorem C} Assume that  $\mathrm{sign}(A/\Q)=-1$ and that 
hypothesis $\mathbf{(Loc)}$ is satisfied. 
Given $x\in{}H^{1}_{f}(\Q,V_{p}(A))$, write for simplicity $\log_{A}(x)=\log_{A}\big(\mathrm{res}_{p}(x)\big)$.

\subsubsection{Step I} Assume that $\mathbf{P}$ is non-zero, i.e. that $\mathrm{ord}_{s=1}L(A/\Q,s)=1$.
Thanks to the work of Gross--Zagier \cite{Gross-Zagier} and Kolyvagin \cite{Koles},
$A(\Q)$ has rank one and  $A(\Q)\otimes\Q_{p}\cong{}H^{1}_{f}(\Q,V_{p}(A))$.  One  can then write 
$\zeta^{\mathrm{BK}}=\lambda\cdot{}\mathbf{P}$, where 
$\lambda=\log_{A}(\zeta^{\mathrm{BK}})/\log_{A}(\mathbf{P})$,
so that $\widetilde{h}_{p}(\zeta^{\mathrm{BK}})=\lambda^{2}\cdot{}\widetilde{h}_{p}(\mathbf{P})$.
Setting $\ell_{2}:=2\ell$, Theorem A and Theorem $\ref{probably main paper}$
combine to give the identity
\[
              L_{p}(f_{\infty},k,s)\ \ \mathrm{mod}\ \mathscr{J}^{3}=\ell_{2}\cdot{}\widetilde{h}_{p}(\mathbf{P}).
\]

\subsubsection{Step II} Assume that $\mathbf{P}=0$. We claim that
\begin{equation}\label{eq:ginost}
                  L_{p}(f_{\infty},k,s)\in{}\mathscr{J}^{3}.
\end{equation} 
Indeed  $\mathrm{ord}_{s=1}L(A/\Q,s)>1$ under our assumptions,
so that $\zeta^{\mathrm{BK}}=0$ by Theorem B. Lemma $\ref{cruclele}$ then yields
$$\left.\frac{\partial^{2}}{\partial{}s^{2}}L_{p}(f_{\infty},k,s)\right|_{(k,s)=(2,1)}=\frac{d^{2}}{ds^{2}}L_{p}(A/\Q,s)_{s=1}=0.$$
Moreover, by the functional equation $(\ref{eq:funeqfinal})$ and Theorem $\ref{main Bertolini-Darmon}$
\[
                    \left.\lri{\frac{\partial^{2}}{\partial{}k^{2}}-\frac{1}{4}\frac{\partial^{2}}{\partial{}s^{2}}}L_{p}(f_{\infty},k,s)
                    \right|_{(k,s)=(2,1)}=\frac{d^{2}}{dk^{2}}L_{p}^{\mathrm{cc}}(f_{\infty},k)_{k=2}=0.
\]
Since $L_{p}(f_{\infty},k,s)\in{}\mathscr{J}^{2}$ by Theorem $\ref{main GS}$,
the  claim $(\ref{eq:ginost})$ follows from the preceding two equations.

\subsubsection{Step III (conclusions)} We now prove Theorem C.
First of all, $L_{p}(f_{\infty},k,s)\in{}\mathscr{J}^{2}$ by $(\ref{eq:kato reciprocity})$
and Theorem $\ref{main GS}$. 
The $p$-adic Gross--Zagier formula which appears  in the statement follows from Steps I and II. Finally, the last assertion
in the statement is a direct consequence of Theorem $\ref{main Bertolini-Darmon}$ and Step II.

\subsection{Proof of Theorem D} Assume that $\mathbf{(Loc)}$ holds.

If $\mathrm{sign}(A/\Q)=+1$, then  $\mathbf{P}=0$ and  the order of vanishing of $L_{p}(A/\Q,s)$
at $s=1$ is odd by  $(\ref{eq:funeqfinal})$. Moreover $\frac{d}{ds}L_{p}(A/\Q,s)_{s=1}=0$,
as follows from $(\ref{eq:kato reciprocity})$ and  Theorem $\ref{main GS}$.
Theorem D follows in this case.

Assume now that $\mathrm{sign}(A/\Q)=-1$. 
As above, one easily proves that $\mathrm{ord}_{s=1}L_{p}(A/\Q,s)\geq{}2$. 
Moreover, writing $\widetilde{h}_{p}(\mathbf{P};k,s)=\widetilde{h}_{p}(\mathbf{P})$,
Theorem $\ref{mainreg}$ yields 
\[
            \left.\widetilde{h}_{p}(\mathbf{P};k,s)\right|_{k=2}=
            \det\begin{pmatrix} \log_{p}(q_{A})  &   \log_{A}(\mathbf{P}) \\ & \\ \log_{A}(\mathbf{P}) & 
            \dia{\mathbf{P},\mathbf{P}}^{\mathrm{cyc}}_{p}\end{pmatrix}\cdot{}\{s-1\}^{2}
            =\log_{p}(q_{A})\cdot{}\dia{\mathbf{P},\mathbf{P}}^{\mathrm{Sch}}_{p}\cdot{}\{s-1\}^{2}.
\]
Setting $\ell_{3}:=2\ell_{2}\cdot{}\mathrm{ord}_{p}(q_{A})^{-1}$ and recalling that $L_{p}(A/\Q,s)=L_{p}(f_{\infty},2,s)$ by equation $(\ref{eq:hht})$,
Theorem D follows by restricting the formula displayed in Theorem C
to the cyclotomic line $k=2$.

\subsection{Proof of Theorem E} Assume that $\mathrm{sign}(A/\Q)=-1$ and that $\mathbf{(Loc)}$ holds. 

Writing as above $\widetilde{h}_{p}(\cdot{};k,s)=\widetilde{h}_{p}(\cdot)$, 
Theorem $\ref{mainreg}$ gives
\[
            \frac{d^{2}}{dk^{2}}\widetilde{h}_{p}(\mathbf{P};k,1)_{k=2}=
            2\det\begin{pmatrix} -\frac{1}{2}\log_{p}(q_{A})  &   0 \\ & \\ -\log_{A}(\mathbf{P}) & 
            \dia{\mathbf{P},\mathbf{P}}^{\mathrm{wt}}_{p}\end{pmatrix}
            =-\log_{p}(q_{A})\cdot{}\dia{\mathbf{P},\mathbf{P}}^{\mathrm{wt}}_{p}.
\]
On the other hand, by $(\ref{eq:essendefimproved})$ and  Theorem 3.18 of \cite{G-S}
\[
         \frac{1}{2}\frac{d^{2}}{dk^{2}}L_{p}(f_{\infty},k,1)_{k=2}=
         \frac{d}{dk}\lri{1-a_{p}(k)^{-1}}_{k=2}\cdot{}\frac{d}{dk}L_{p}^{\ast}(f_{\infty},k)_{k=2}=-\frac{1}{2}\mathscr{L}_{p}(A)\cdot{}
         \frac{d}{dk}L^{\ast}_{p}(f_{\infty},k)_{k=2}.
\]
Since $\mathscr{L}_{p}(A)\not=0$ \cite{M-man},   Theorem C and the preceding two equations yield
the identity 
\[
             \frac{d}{dk}L_{p}^{\ast}(f_{\infty},k)_{k=2}=2\ell_{4}\cdot{}\dia{\mathbf{P},\mathbf{P}}^{\mathrm{wt}}_{p};\ \ \ 
             \ell_{4}:=\ell_{2}\cdot{}\mathrm{ord}_{p}(q_{A}).
\]
To conclude the proof, it remains to show that
$2\dia{\mathbf{P},\mathbf{P}}^{\mathrm{wt}}_{p}=-\dia{\mathbf{P},\mathbf{P}}^{\mathrm{cyc}}_{p}$. This
 follows from  Theorem $\ref{mainreg}(3)$.

\bibliography{myref1}{}
\bibliographystyle{alpha}

\end{document}